\numberwithin{equation}{section}
\numberwithin{figure}{section}
\theoremstyle{plain}
\newtheorem{thm}{\protect\theoremname}[section]
\theoremstyle{plain}
\newtheorem{conjecture}[thm]{\protect\conjecturename}
\theoremstyle{remark}
\theoremstyle{plain}
\newtheorem{lem}[thm]{\protect\lemmaname}
\providecommand{\conjecturename}{Conjecture}
\providecommand{\lemmaname}{Lemma}
\providecommand{\remarkname}{Remark}
\providecommand{\theoremname}{Theorem}
\global\long\def\C{\mathbb{C}}%
\global\long\def\R{\mathbb{R}}%
\global\long\def\H{\mathcal{H}}%
\global\long\def\N{\mathbb{N}}%
\global\long\def\Z{\mathbb{Z}}%
\global\long\def\F{\mathbb{F}_{q}}%
\global\long\def\SL{\operatorname{SL}}%
\global\long\def\SO{\operatorname{SO}}%
\DeclareMathOperator{\supp}{supp}
\DeclareMathOperator{\spann}{span}
\DeclareMathOperator{\tr}{tr}
\DeclareMathOperator{\Sym}{Sym}
\global\long\def\n#1{\left\Vert #1\right\Vert }%
\begin{document}
\title{Optimal Lifting for the Projective Action of \texorpdfstring{$\operatorname{SL}_{3}\left(\mathbb{Z}\right)$}{SL3(Z)} 
}
\author{Amitay Kamber and Hagai Lavner}
\thanks{Amitay Kamber, amitay.kamber@gmail.com, \\
Hagai Lavner, hagai.lavner@mail.huji.ac.il,\\
Einstein Institute of Mathematics, The Hebrew University of Jerusalem}

\begin{abstract}
Let $\epsilon>0$ and let $q\to\infty$ be a prime. We prove that with high probability, given $x$, $y$ in the projective plane over $\mathbb{F}_{q}$ there exists $\gamma\in \SL_{3}\left(\mathbb{Z}\right)$, with coordinates bounded by $q^{1/3+\epsilon}$, whose projection to $\SL_{3}\left(\mathbb{F}_{q}\right)$ sends $x$ to $y$. The exponent $1/3$ is optimal and the result is a high rank generalization of Sarnak's optimal strong approximation theorem for $\SL_{2}\left(\mathbb{Z}\right)$.
\end{abstract}

\maketitle

\section{Introduction}

In his letter (\cite{sarnak2015lettermiller}), Sarnak proved the
following lifting theorem, which he called optimal strong approximation.
\begin{thm}
\label{thm:Sarnak}Let $\Gamma=\SL_{2}\left(\Z\right)$, $q\in \Z_{>0}$,  $G_{q}=\SL_{2}\left(\Z/q\Z\right)$
and let $\pi_{q}:\Gamma\to G_{q}$ be the quotient map. Then for every $\epsilon>0$, as $q\to\infty$, there exists a set $Y\subset G_{q}$ of size
$\left|Y\right|\ge\left|G_{q}\right|\left(1-o_{\epsilon}\left(1\right)\right)$,
such that for every $y\in Y$ there exists $\gamma\in\Gamma$ of norm
$\n{\gamma}_{\infty}\le q^{3/2+\epsilon}$, with $\pi_{q}\left(\gamma\right)=y$,
where $\n{\cdot}_{\infty}$ is the infinity norm on the coordinates
of the matrix.
\end{thm}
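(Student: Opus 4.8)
The plan is a second-moment argument. For $g_0\in G_q$ set $N(g_0,T)=\#\{\gamma\in\Gamma:\pi_q(\gamma)=g_0,\ \n{\gamma}_{\infty}\le T\}$; since replacing $\n{\cdot}_\infty$ by the $K$-bi-invariant Hilbert--Schmidt norm only changes $T$ by a bounded factor (which is absorbed into $\epsilon$), we may work with the latter. It suffices to prove that for $T=q^{3/2+\epsilon}$ one has $N(g_0,T)>0$ for all but $o_\epsilon(|G_q|)$ residues $g_0$. The first moment is a lattice-point count: the norm ball $B_T=\{g\in\SL_2(\R):\n{g}\le T\}$ has volume $\Vol(B_T)\asymp T^2$ and $\SL_2(\Z)$ is a lattice, so $\sum_{g_0}N(g_0,T)=\#(\Gamma\cap B_T)\asymp T^2$ and the mean $M$ of $N(\cdot,T)$ over $G_q$ is $\asymp T^2/|G_q|\asymp q^{2\epsilon}$. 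This is where the exponent comes from: $M\gg1$ precisely when $\Vol(B_T)$ exceeds $|G_q|\asymp q^3$, i.e.\ $T\gg q^{3/2}$. By Chebyshev's inequality the theorem follows once we show $\sum_{g_0\in G_q}|N(g_0,T)-M|^2=o\bigl(M^2|G_q|\bigr)=o(T^4/q^3)$.

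Expanding the square, $\sum_{g_0}N(g_0,T)^2=\#\{(\gamma,\gamma')\in(\Gamma\cap B_T)^2:\gamma\equiv\gamma'\pmod q\}=\sum_{\delta\in\Gamma(q)}\#\bigl(\Gamma\cap B_T\cap\delta^{-1}B_T\bigr)$, where $\Gamma(q)$ is the principal congruence subgroup. The diagonal term $\delta=e$ contributes $\#(\Gamma\cap B_T)\asymp T^2=o(T^4/q^3)$, so the content lies in the terms $\delta\ne e$, i.e.\ in showing $\sum_{\delta\in\Gamma(q)}\#(\Gamma\cap B_T\cap\delta^{-1}B_T)=\#(\Gamma\cap B_T)^2/|G_q|+o(T^4/q^3)$ --- a quantitative equidistribution statement for $\Gamma(q)$ inside $\SL_2(\R)$. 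Evaluating the inner lattice counts for $\Gamma=\SL_2(\Z)$ itself (which has no exceptional spectrum, so those errors are small and sum to $o(T^4/q^3)$), this reduces to bounding $\|\Phi_{q,T}-\langle\Phi_{q,T}\rangle\|_{L^2(\Gamma(q)\backslash\SL_2(\R))}^2$, where $\Phi_{q,T}(x)=\sum_{\delta\in\Gamma(q)}\mathbf 1_{B_T}(\delta x)$ is the automorphic average of the ball; in other words the variance is governed by the spectral gap of $\Gamma(q)\backslash\SL_2(\R)$.

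Expanding $\Phi_{q,T}$ spectrally, its inner product with a spectral component $\pi$ has absolute value $|\widehat{\mathbf 1_{B_T}}(\pi_\infty)|\,|\phi_\pi(x_0)|$ --- the spherical transform of the ball times the value at the base point $x_0=\Gamma(q)e$. On the tempered part of the spectrum (the unitary principal series, and the Eisenstein spectrum once its residual main term is removed, neither of which requires any arithmetic input) one has $|\widehat{\mathbf 1_{B_T}}(\pi_\infty)|\ll_\epsilon T^{1+\epsilon}$ with decay in the spectral parameter, and together with the Weyl law for $\Gamma(q)\backslash\SL_2(\R)$ (spectral count $\asymp q^3R^2$ up to parameter $R$) this part of the variance is $\ll T^{2+\epsilon}=o(T^4/q^3)$, sharp at $T=q^{3/2+\epsilon}$; this is the optimal calculation. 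The exceptional part $\lambda<1/4$ is the delicate one: here one uses the uniform spectral gap for the congruence family --- Selberg's $\lambda_1\ge 3/16$, or the Kim--Sarnak bound, valid uniformly in $q$ by property $(\tau)$ --- to bound $\widehat{\mathbf 1_{B_T}}$, together with the fact that $x_0$ lies in a region of $\Gamma(q)\backslash\SL_2(\R)$ of injectivity radius $\asymp\log q$, so that $\sum_{\pi\ \mathrm{exc}}|\phi_\pi(x_0)|^2$ is small (an exceptional form is necessarily of level $q$, as level one has $\lambda_1>1/4$, and at $x_0$ it is of essentially generic size); this makes the exceptional contribution $o(T^4/q^3)$ as well.

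I expect the exceptional-spectrum estimate to be the main obstacle: keeping the total contribution of the exceptional forms below $T^4/q^3$ for $T$ as small as $q^{3/2+\epsilon}$ forces a careful balancing of the spectral gap against the size and distribution of those forms, and any slack there --- in the gap, in the density of exceptional eigenvalues, or in the archimedean transform --- would push the exponent above $3/2$. It is essential that the conclusion is needed only for almost every $g_0$, not for every $g_0$: this allows an $L^2$/variance bound, hence the use of the spectral gap in place of an individual Ramanujan-type estimate, and it is exactly this that makes the exponent $3/2$ --- and no smaller, by the size/volume balance of the first paragraph --- attainable. Finally, for general $q\in\Z_{>0}$ one reduces to prime powers by the Chinese Remainder Theorem, using that property $(\tau)$ holds uniformly over $\{\Gamma(q)\}$, and Chebyshev's inequality upgrades the resulting $L^2$-smallness of $N(\cdot,T)-M$ to pointwise positivity for all but $o(|G_q|)$ residues.
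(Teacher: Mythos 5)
Your skeleton --- second moment of $N(\cdot,T)$, Chebyshev, reduction of the off-diagonal to $\sum_{\delta\in\Gamma(q)}\#(\Gamma\cap B_T\cap\delta^{-1}B_T)$, and a spectral treatment of the resulting variance --- is the right one, and it is essentially the continuous quantity $\|b_{T,e}\|_2^2=\sum_{\gamma\in\Gamma(q)}(\chi_T*\chi_T)(\gamma)$ that the paper controls. But there is a genuine gap at exactly the point you yourself flag as the main obstacle: the exceptional spectrum. Selberg's $\lambda_1\ge 3/16$ (equivalently property $(\tau)$) is a \emph{gap} statement, not a \emph{density} statement: it tells you each exceptional parameter satisfies $s_\pi\le 3/4$, so each exceptional form contributes $\ll T^{3}\,|\phi_\pi(x_0)|^2$ to the variance, but it says nothing about how many exceptional forms there are or about $\sum_{\pi\,\mathrm{exc}}|\phi_\pi(x_0)|^2$. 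A priori the number of exceptional eigenvalues of $\Gamma(q)\backslash\H$ is only bounded by $O(\Vol)=O(q^3)$, and "generic size" $|\phi_\pi(x_0)|^2\asymp q^{-3}$ then gives $\sum_{\pi\,\mathrm{exc}}|\phi_\pi(x_0)|^2=O(1)$ and an exceptional contribution of order $T^{3}=q^{9/2+O(\epsilon)}$, which dwarfs the target $T^4/q^3=q^{3+O(\epsilon)}$. Your sentence "at $x_0$ it is of essentially generic size" is an assertion about individual sup-norms and does not bound the count; nothing in the proposal supplies the needed estimate $\sum_{\pi\,\mathrm{exc}}|\widehat{\mathbf 1}_{B_T}(\pi)|^2|\phi_\pi(x_0)|^2=o(T^4/q^3)$. (A secondary instance of the same gap: your step "evaluating the inner lattice counts for $\SL_2(\Z)$ itself, whose errors sum to $o(T^4/q^3)$" multiplies a per-term error by $\#\{\delta\in\Gamma(q):\|\delta\|\le T^2\}$, which again cannot be bounded without the very count you are trying to establish.)

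What is missing is an arithmetic input beyond the spectral gap: either a density theorem for exceptional eigenvalues of $\Gamma(q)$ (Huxley's theorem, which is what Sarnak's letter uses), or --- as in this paper --- the elementary Sarnak--Xue count
\[
\#\{\gamma\in\SL_2(\Z):\ \gamma\equiv I\ (\mathrm{mod}\ q),\ \n{\gamma}_\infty\le T\}\ \ll_\epsilon\ T^\epsilon\bigl(T^2/q^3+T/q+1\bigr),
\]
proved by observing $\det(\gamma-I)\equiv 0\pmod{q^2}$, hence $a+d\equiv 2\pmod{q^2}$, and then applying divisor bounds to $bc=1-ad$. Fed through the pointwise bound $\chi_T*\chi_T\ll\psi_{T^2}$ and partial summation, this count is exactly what replaces (and is essentially equivalent to) the exceptional-density estimate, yielding $\n{b_{T,e}}_2^2\ll_\epsilon T^\epsilon/q^3$ at $T=q^{3/2}$; only after that does the uniform spectral gap enter, to smooth $b_{T,e}$ by an extra $\chi_{T^\eta}$. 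With the gap alone your variance bound saves only a fixed power $T^{-\eta\tau}$ over the trivial bound, which forces $T$ far above $q^{3/2}$. So the proposal as written does not close; supplying the congruence lattice-point count (or Huxley's density theorem) is the essential missing step.
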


The exponent $3/2$ in Theorem~\ref{thm:Sarnak} is optimal, as the
the size of $G_{q}$ is asymptotic to $q^{3}$, while the number of
$\gamma\in \SL_{2}\left(\Z\right)$ satisfying $\n{\gamma}_{\infty}\le T$
grows asymptotically like the Haar measure of the ball $B_{T}$ of
radius $T$ in $\SL_{2}\left(\R\right)$ (\cite{duke1993density,maucourant2007homogeneous}),
i.e., $\mu\left(B_{T}\right)\asymp T^{2}$.

We use the standard notation $x\ll_{z}y$ to say that there is a constant $C$ depending only on $z$ such that $x\le Cy$, and $x\asymp_{z}y$ means that $x\ll_z y$ and $y\ll_z x$.

We wish to discuss extensions of this theorem to $\SL_{3}$, with a
view towards general $\SL_{N}$. If $\Gamma=\SL_{N}\left(\Z\right)$,
then the number of $\gamma\in\Gamma$ of satisfying $\n{\gamma}_{\infty}\le T$
also grows like the Haar measure of the ball of radius $T$ in $\SL_{N}\left(\R\right)$,
i.e., $\mu\left(B_{T}\right)\asymp T^{N^{2}-N}$ (\cite{duke1993density,maucourant2007homogeneous}),
while the size of $G_{q}=\SL_{N}\left(\Z/q\Z\right)$ is $\left|G_{q}\right|\asymp q^{N^{2}-1}$.
One is therefore led to the following:
\begin{conjecture}
\label{conj:Sarnak}Let $\Gamma=\SL_{N}\left(\Z\right)$, $q\in \Z_{>0}$, $G_{q}=\SL_{N}\left(\Z/q\Z\right)$
and let $\pi_{q}:\Gamma\to G_{q}$ be the quotient map. Then for every $\epsilon>0$, as $q\to\infty$, there exists a set $Y\subset G_{q}$ of size
$\left|Y\right|\ge\left|G_{q}\right|\left(1-o_{\epsilon}\left(1\right)\right)$,
such that for every $y\in Y$ there exists $\gamma\in\Gamma$ of norm
$\n{\gamma}_{\infty}\le q^{\left(N^{2}-1\right)/\left(N^{2}-N\right)+\epsilon}$,
with $\pi_{q}\left(\gamma\right)=y$, where $\n{\cdot}_{\infty}$
is the infinity norm on the coordinates of the matrix.
\end{conjecture}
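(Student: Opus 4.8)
\medskip
\noindent\textbf{Proof proposal.} The case $N=2$ is exactly Theorem~\ref{thm:Sarnak}, so I focus on $N\ge3$; the plan below is the natural attempt for all $N$, and Sarnak's argument is its $N=2$ instance. The idea is to turn the conjecture into an effective count of lattice points of the principal congruence subgroup inside Euclidean balls, carried out one fibre of $\pi_{q}$ at a time, and to control the resulting fluctuations by the spectral theory of $\SL_{N}\k\Z\backslash\SL_{N}\k\R$.

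\emph{Reformulation.} Write $G=\SL_{N}\k\R$, $\Gamma=\SL_{N}\k\Z$, $\Gamma\k q=\ker\pi_{q}$, so $\Gamma\k q\backslash\Gamma\cong G_{q}$ and $\left[\Gamma:\Gamma\k q\right]=\left|G_{q}\right|\asymp q^{N^{2}-1}$; put $B_{T}=\left\{ g\in G:\n g_{\infty}\le T\right\} $, with $\Vol\k{B_{T}}\asymp T^{N^{2}-N}$. Fixing for each $y\in G_{q}$ a lift $\gamma_{y}\in\Gamma$, the set $\left\{ \gamma\in\Gamma:\pi_{q}\k{\gamma}=y,\ \n{\gamma}_{\infty}\le T\right\} $ is in bijection (via $\gamma=\delta\gamma_{y}$) with $\Gamma\k q\cap B_{T}\gamma_{y}^{-1}$; write $R\k{y,T}$ for its cardinality. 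For $T=q^{\k{N^{2}-1}/\k{N^{2}-N}+\epsilon}$, Conjecture~\ref{conj:Sarnak} asserts $R\k{y,T}\ge1$ for all but $o_{\epsilon}\k{\left|G_{q}\right|}$ values of $y$; the mean $\overline{R}:=\left|G_{q}\right|^{-1}\sum_{y}R\k{y,T}=\#\k{\Gamma\cap B_{T}}/\left|G_{q}\right|\asymp T^{N^{2}-N}/q^{N^{2}-1}\asymp q^{\epsilon\k{N^{2}-N}}$ tends to infinity, so the optimal exponent is precisely the regime in which this expectation has just crossed $1$.

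\emph{Second moment.} One has $\sum_{y}R\k{y,T}=\#\k{\Gamma\cap B_{T}}\sim c_{N}T^{N^{2}-N}$ by the lattice-point counting of Duke--Rudnick--Sarnak and Maucourant (\cite{duke1993density,maucourant2007homogeneous}), and, parametrising pairs by $\delta=\gamma_{2}\gamma_{1}^{-1}\in\Gamma\k q$, the exact identity
\[
\sum_{y\in G_{q}}R\k{y,T}^{2}=\#\k{\Gamma\cap B_{T}}+\sum_{\delta\in\Gamma\k q\setminus\left\{ I\right\} }\#\k{\Gamma\cap B_{T}\cap\delta^{-1}B_{T}}.
\]
The main term of the off-diagonal sum is $\left|G_{q}\right|^{-1}\k{\sum_{y}R\k{y,T}}^{2}$ up to a lower-order correction, so the variance $\left|G_{q}\right|^{-1}\sum_{y}R\k{y,T}^{2}-\overline{R}^{2}$ equals $\overline{R}$ (harmless, since $\overline{R}=o\k{\overline{R}^{2}}$) plus $\left|G_{q}\right|^{-1}$ times the off-diagonal error. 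By Chebyshev's inequality the number of $y$ with $R\k{y,T}=0$ is therefore $o_{\epsilon}\k{\left|G_{q}\right|}$ as soon as
\[
\sum_{\delta\in\Gamma\k q\setminus\left\{ I\right\} }\Bigl[\#\k{\Gamma\cap B_{T}\cap\delta^{-1}B_{T}}-\Vol\k{B_{T}\cap\delta^{-1}B_{T}}\Bigr]\;=\;o_{\epsilon}\!\left(\frac{T^{2\k{N^{2}-N}}}{q^{N^{2}-1}}\right).
\]
Each summand is the error in counting $\Gamma$-points --- at level one --- in the well-rounded region $B_{T}\cap\delta^{-1}B_{T}$; after smoothing at a suitable scale and expanding spectrally on $\Gamma\backslash G$ it is bounded using the spectral gap of $\SL_{N}\k\Z\backslash\SL_{N}\k\R$ (automatic, e.g.\ from Kazhdan's property $(T)$ when $N\ge3$, and from Selberg's theorem when $N=2$) together with well-roundedness estimates in the style of Eskin--McMullen and Gorodnik--Nevo. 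The constraint $\delta\ne I$, which forces $\n{\delta}_{\infty}\ge q-1$, is essential: it removes the large ``near-identity'' terms and is what produces the saving by a factor $\approx\left|G_{q}\right|$ in the off-diagonal contribution.

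\emph{The obstacle.} Everything comes down to the exponent of the power saving in the effective count, and this exponent is controlled by the decay of automorphic matrix coefficients on $\SL_{N}\k\Z\backslash\SL_{N}\k\R$ --- equivalently, by how far the automorphic spectrum can lie from the Ramanujan--Petersson prediction. Working just above the optimal exponent forces these matrix coefficients to be almost tempered: one needs either a sufficiently strong bound towards Ramanujan for $\mathrm{GL}_{N}$, or at least a Sarnak--Xue-type density hypothesis bounding, in each window of Hecke eigenvalues, the number of exceptional automorphic representations by essentially the square root of the trivial bound. For $N=2$ this input is available --- Selberg's $3/16$ bound (sharpened by Kim--Sarnak) together with the spectral large sieve of Deshouillers--Iwaniec --- which is exactly why Theorem~\ref{thm:Sarnak} is a theorem; for $N\ge3$ at the optimal exponent it is not known, and supplying the required density or Ramanujan-type bounds for $\mathrm{GL}_{N}$ is the main, and at present decisive, obstacle --- this is why the statement is posed as a conjecture. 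Any fixed $\epsilon_{0}>0$ worsening of the exponent relaxes the demand on the matrix coefficients and brings partial cases within reach: in particular, the optimal exponent for the projective action of $\SL_{3}\k\Z$, treated in the present paper, involves a parabolic point-stabiliser, for which the relevant spectral input is of Eisenstein (rank-one) type and can be handled unconditionally.
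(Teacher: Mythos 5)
The statement you were given is Conjecture~\ref{conj:Sarnak}, and the paper contains no proof of it: the authors say explicitly that they were unable to prove it even for $N=3$, and instead establish the weaker ``optimal lifting on average'' statement (Theorem~\ref{thm:almost radius}) for the non-principal congruence subgroup $\Gamma_{0}^{\prime}(q)$. Your write-up, appropriately, does not claim to prove the conjecture either; judged as a strategy sketch it is sound and is essentially the Sarnak--Xue framework that the paper itself uses for its theorems: reduce to a second-moment/lattice-point count for $\Gamma(q)$ in norm balls, and convert a sufficiently strong counting (equivalently, density) bound into almost-all lifting via a spectral gap. Your identification of the decisive missing ingredient --- a Sarnak--Xue-type density hypothesis or near-Ramanujan bounds for $\mathrm{GL}_{N}$, $N\ge 3$, at the optimal exponent --- agrees with the paper's discussion (the naive Ramanujan conjecture is false by Burger--Li--Sarnak, and the available density theorems of Blomer--Buttcane--Maga and Blomer concern cusp forms and do not yet yield the conjecture). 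In the paper's counting language, what is missing is precisely the $N\ge 3$ analogue of Theorem~\ref{thm:counting SL2} for the principal congruence subgroup; the spectral gap coming from property (T) is not the bottleneck.

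Two caveats on details, neither of which changes the verdict. First, your suggestion that the off-diagonal error terms can be bounded ``using the spectral gap together with well-roundedness estimates'' in the style of Eskin--McMullen and Gorodnik--Nevo gives only a fixed power saving per term and cannot by itself reach the optimal exponent, and uniformity of such effective counts in the translate $\delta$ is itself a serious issue; this is consistent with your own ``obstacle'' paragraph, but the sentence as written overstates what those tools deliver. Second, some attributions are off: the paper's proof of Theorem~\ref{thm:Sarnak} in Section~\ref{sec:SL2} does not use the Deshouillers--Iwaniec spectral large sieve, but rather the elementary counting bound of Theorem~\ref{thm:counting SL2} combined with Selberg's spectral gap (Huxley's density theorem is Sarnak's original route, which the paper deliberately avoids); likewise, the paper's proof of the projective-plane result for $\SL_{3}(\Z)$ proceeds by the counting Theorem~\ref{thm:lattice point} plus property (T), not by an Eisenstein-spectrum analysis as your closing sentence suggests. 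With these corrections, your proposal is a reasonable conditional outline of an open conjecture that the paper also leaves open.
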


While we were unable to prove Conjecture~\ref{conj:Sarnak} even
for $N=3$, we prove a similar theorem for a non-principal congruence
subgroup of $\SL_{3}\left(\Z\right)$. For a prime $q$, let $\F$ be the field with $q$ elements, let $P_{q}=P^{2}\left(\F\right)$
be the 2-dimensional projective space over $\F$, i.e., the set of
vectors $\left(\begin{smallmatrix}
a \\
b \\
c
\end{smallmatrix}\right)$, $a, b, c\in\F$ not all $0$, modulo the equivalence relation $\left(\begin{smallmatrix}
a\\
b\\
c
\end{smallmatrix}\right)\sim\left(\begin{smallmatrix}
\alpha a\\
\alpha b\\
\alpha c
\end{smallmatrix}\right)$ for  $\alpha\in\F^\times$.  
The group $\SL_{3}\left(\F\right)$
acts naturally on $P_{q}$, and by composing this action with $\pi_{q}$
we have an action $\Phi_{q}\colon\SL_{3}\left(\Z\right)\to \Sym\left(P_{q}\right)$.

\begin{thm}\label{thm:almost radius}Let $\Gamma=\SL_{3}\left(\Z\right)$, and
for a prime $q$ let $P_{q}=P^{2}\left(\F\right)$ and $\Phi_{q}:\SL_{3}\left(\Z\right)\to\Sym\left(P_{q}\right)$ as above. Then for every $\epsilon>0$, as $q\to\infty$, there exists a set $Y\subset P_q$ of size $\left|Y\right|\ge\left(1-o_{\epsilon}(1)\right)\left|P_{q}\right|$,
such that for every $x\in Y$, there exists a set $Z_{x}\subset P_{q}$
of size $\left|Z_{x}\right|\ge\left(1-o_{\epsilon}(1)\right)\left|P_{q}\right|$,
such that for every $y\in Z_{x}$, there exists an element $\gamma\in\Gamma$
satisfying $\n{\gamma}_{\infty}\le q^{1/3+\epsilon}$, such
that $\Phi_{q}\left(\gamma\right)x=y$.
\end{thm}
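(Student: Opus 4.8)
The plan is to bound, for $x=[v]$ and $y=[w]$ (with $v,w$ primitive integer lifts) and $T=q^{1/3+\epsilon}$, the lifting count $N_{T}(x,y)=\#\{\gamma\in\Gamma:\n{\gamma}_{\infty}\le T,\ \Phi_{q}(\gamma)x=y\}$, and to show $N_{T}(x,y)\ge1$ for almost all $x$ and then almost all $y$. Put $S_{T}=\{\gamma\in\Gamma:\n{\gamma}_{\infty}\le T\}$, so $|S_{T}|\asymp\Vol(B_{T})\asymp T^{6}=q^{2+6\epsilon}$ while $|P_{q}|\asymp q^{2}$; let $A_{T}=\sum_{\gamma\in S_{T}}\Phi_{q}(\gamma)$ act on $\ell^{2}(P_{q})$, so $N_{T}(x,y)=\langle A_{T}\delta_{x},\delta_{y}\rangle$. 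Since $P_{q}=\SL_{3}(\F)/P$ for a maximal parabolic $P$, the $\SL_{3}(\F)$-isotypic splitting $\ell^{2}(P_{q})=\C\mathbf{1}\oplus W$ has $W$ irreducible of dimension $q^{2}+q$, is preserved by $A_{T}$, and $A_{T}$ acts on $\C\mathbf{1}$ by $|S_{T}|$; hence $N_{T}(x,y)=|S_{T}|/|P_{q}|+\langle A_{T}|_{W}\delta_{x}^{\perp},\delta_{y}^{\perp}\rangle$ with $\delta_{x}^{\perp}=\delta_{x}-|P_{q}|^{-1}\mathbf{1}$. The main term here is only $\asymp q^{6\epsilon}$, which is exactly why $1/3$ is critical, so (by Chebyshev's inequality in $y$, since $\#\{y:N_{T}(x,y)=0\}\le\|A_{T}|_{W}\delta_{x}^{\perp}\|_{2}^{2}\,|P_{q}|^{2}/|S_{T}|^{2}$) it is enough to prove the second-moment estimate
\[
\|A_{T}|_{W}\delta_{x}^{\perp}\|_{2}^{2}=o\!\left(|S_{T}|^{2}/|P_{q}|\right)=o(q^{2+12\epsilon})\qquad\text{for all }x\text{ in a set }Y\text{ with }|Y|\ge(1-o_{\epsilon}(1))|P_{q}|.
\]

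The second moment unwinds to a lattice-point count: $\|A_{T}|_{W}\delta_{x}^{\perp}\|_{2}^{2}=\|A_{T}\delta_{x}\|_{2}^{2}-|S_{T}|^{2}/|P_{q}|=\sum_{\delta\in\Gamma_{x}}r_{T}(\delta)-|S_{T}|^{2}/|P_{q}|$, where $\Gamma_{x}\le\Gamma$ is the preimage under $\Phi_{q}$ of the stabiliser of $x$ in $\SL_{3}(\F)$ (a congruence subgroup of index $|P_{q}|$, of parabolic type) and $r_{T}(\delta)=\#\{\gamma\in S_{T}:\gamma\delta\in S_{T}\}$, so that $\sum_{\delta\in\Gamma}r_{T}(\delta)=|S_{T}|^{2}$ exactly. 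Thus we must show that the discrepancy in the equidistribution of the lattice $\Gamma_{x}$ in $\SL_{3}(\R)$ — equivalently, of the orbit $\Gamma v\subset\Z^{3}$ reduced modulo $q$ — weighted by $r_{T}$, is $o(q^{2+12\epsilon})$ for almost all $x$. The key structural point, and the reason the \emph{optimal} exponent is accessible, is the factor $|P_{q}|^{-1}=\Vol(\Gamma_{x}\backslash\SL_{3}(\R))^{-1}$ carried by the main term: I would run a counting argument of Duke--Rudnick--Sarnak / Gorodnik--Nevo type for $\Gamma_{x}$, smoothing $\mathbf{1}_{B_{T}}$ at a scale $q^{-A}$ and expanding the $\Gamma_{x}$-periodization of the resulting convolution square spectrally on $L^{2}(\Gamma_{x}\backslash\SL_{3}(\R))$; the constant function yields precisely $|S_{T}|^{2}/|P_{q}|$, while the contribution of a nontrivial unitary subrepresentation $\pi$ with matrix coefficients decaying like $\Xi^{1-s_{\pi}}$ is $\ll\Vol(B_{T})^{1+2s_{\pi}+o(1)}/|P_{q}|$, which is $o(q^{2+12\epsilon})$ as soon as $s_{\pi}<1/2$.

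It therefore suffices that every nontrivial $\pi$ in $L^{2}(\Gamma_{x}\backslash\SL_{3}(\R))$ has $s_{\pi}<1/2$ with the bound holding with the sharp exponent $\tfrac12$ (no $\epsilon$-loss in the leading power). For the cuspidal spectrum — both the fixed level-one forms and the genuinely level-$q$ ones — this follows unconditionally from any nontrivial bound towards the Ramanujan conjecture for $\SL_{3}$ (e.g.\ Jacquet--Shalika, or Kim--Sarnak), \emph{no} density hypothesis being needed thanks to the $|P_{q}|^{-1}$ factor; the work is in the residual and continuous (Eisenstein) spectrum. Here, because $W$ is a degenerate principal series of $\SL_{3}(\F)$ (it has a parabolic-fixed vector), the relevant representations are controlled by the maximal-parabolic Eisenstein series of $\SL_{3}$ with trivial data on the Levi $\mathrm{GL}_{2}\times\mathrm{GL}_{1}$ — an explicit one-parameter family whose analytic continuation, poles and growth are governed by the Riemann zeta function, so only classical analytic number theory enters. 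The Eisenstein contribution is large only when the base point $\Gamma_{x}e\in\Gamma_{x}\backslash\SL_{3}(\R)$ sits high in a cusp — equivalently when $x$ admits an anomalously short integer lift, equivalently when the injectivity radius at $\Gamma_{x}e$ is too small, equivalently when the orbit $\{\Phi_{q}(\gamma)x:\n{\gamma}_{\infty}\le T\}$ is itself too small; a direct count shows the set of such $x$ has size $o(|P_{q}|)$, and these are the $x$ we discard to form $Y$.

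With $Y$ so chosen, the second-moment bound and Chebyshev's inequality give $\#\{y:N_{T}(x,y)=0\}=o(|P_{q}|)$ for every $x\in Y$, so $Z_{x}=\{y:N_{T}(x,y)>0\}$ satisfies $|Z_{x}|\ge(1-o_{\epsilon}(1))|P_{q}|$, which is the assertion. I expect the main obstacle to be making the $\Gamma_{x}$-count effective with the \emph{sharp} leading exponent $\tfrac12$ and the explicit $|P_{q}|^{-1}$-gain, uniformly in $q$ and in $x$: this requires optimal ($L^{2+\epsilon}$-type) matrix-coefficient bounds for $\SL_{3}$ applied to these specific representations, together with a careful, uniform treatment of the degenerate Eisenstein series away from the cusps — in particular a precise determination of which $x$ are "close to a cusp" and a verification that their number is $o(|P_{q}|)$ — and the bookkeeping to keep every power of $q$ explicit through the smoothing and the spectral expansion.
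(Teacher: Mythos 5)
Your high-level skeleton --- a second-moment bound for $\|A_T|_W\delta_x^{\perp}\|_2^2$ followed by Chebyshev in $y$, with a bad set of $x$ discarded --- is essentially the same Sarnak--Xue-type architecture the paper uses (there it appears as $\n{b_{T,\iota(x_0)}}_2^2$ plus Cauchy--Schwarz). The genuine gap is in how you propose to prove the second-moment bound. You claim that after spectrally expanding $\sum_{\delta\in\Gamma_x}r_T(\delta)$ on $L^2(\Gamma_x\backslash\SL_3(\R))$, each nontrivial representation $\pi$ contributes $\ll\Vol(B_T)^{1+2s_\pi+o(1)}/|P_q|$, so that a fixed bound $s_\pi<1/2$ (Jacquet--Shalika for cusp forms) suffices and ``no density hypothesis is needed thanks to the $|P_q|^{-1}$ factor.'' That factor is not there: in the pre-trace expansion at the base point $\iota(x)$, only the constant function carries $\Vol(\Gamma_x\backslash G)^{-1}=|P_q|^{-1}$; the contribution of the rest of the spectrum is $\sum_{j\neq 0}|\phi_j(\iota(x))|^2\,|\widehat{\chi_T}(\lambda_j)|^2$, and one must control the mass $\sum|\phi_j(\iota(x))|^2$ carried by the exceptional forms. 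There are $\asymp q^2$ automorphic forms of bounded eigenvalue at level $\Gamma_x$; even with $s_\pi\le 2/5$ for all cusp forms, your own per-representation bound summed over that many forms gives $\approx q^2\cdot\Vol(B_T)^{2s_\pi}\gg\Vol(B_T)$, which swamps the main term. What is needed is precisely a density estimate for exceptional forms (the paper cites Blomer--Buttcane--Maga and Blomer for exactly this, and notes the additional difficulty of non-tempered Eisenstein spectrum). So the step you wave through is the entire content of the problem, and as stated your accounting does not close.

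The paper's route avoids this circularity: it proves the needed input \emph{geometrically} as Theorem~\ref{thm:lattice point}, an elementary count (divisor bounds, minors, trace identities mod $q$ and $q^2$) of $\gamma\in\SL_3(\Z)$ with $\n{\gamma}_\infty\n{\gamma^{-1}}_\infty\le T$ fixing a point of $P^2(\F)$; it converts that count into $\n{b_{T,\iota(x_0)}}_2\ll q^{-1+\epsilon}$ for most $x_0$ via the Convolution Lemma (bounds on Harish-Chandra's $\Xi$) and partial summation, extracting the bad set $Y^c$ by a Markov inequality from the averaged count rather than by a cusp-proximity criterion; and it then needs only the crude, $q$-uniform spectral gap from property (T), not any bound of quality $1/2$. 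Two further points you would hit if you pushed your version through: the autocorrelation $r_T(\delta)$ decays in the bi-$K$-invariant gauge $\n{\delta}_\infty\n{\delta^{-1}}_\infty$, not in $\n{\delta}_\infty$, which is why the counting theorem must be (and is) stated in that gauge; and your identification of the bad $x$ with points ``high in a cusp,'' while consonant with the paper's discussion of $\boldsymbol{1}$ and the Burger--Li--Sarnak phenomenon, is asserted rather than proved and is not how the exceptional set is actually produced.
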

The exponent $1/3$ is optimal, since the size of $P_{q}$ is
$\left|P_{q}\right|\asymp q^{2}$, while the number of elements $\gamma\in \SL_{3}\left(\Z\right)$
satisfying $\n{\gamma}_{\infty}\le T$ is $\asymp T^{6}$. 

An alternative formulation of Theorem~\ref{thm:almost radius} is that for all but $o_\epsilon(|P_q|^2)$ of pairs $(x,y)\in P_q\times P_q$, there exists an element $\gamma \in \Gamma$ satisfying $\n{\gamma}_\infty\le q^{1/3+\epsilon}$ such that $\Phi_{q}\left(\gamma\right)x=y$. However, in this formulation it is a bit harder to see why the exponent $1/3$ is optimal, and our proof actually uses the formulation of Theorem~\ref{thm:almost radius} as stated.

An important observation is that the premise of Theorem~\ref{thm:almost radius}
actually fails for the point
$x = \boldsymbol{1}=\left(\begin{smallmatrix}
0\\
0\\
1
\end{smallmatrix}\right)\in P_{q}$.
Elements sending $\boldsymbol{1}$ to $\left(\begin{smallmatrix}
a\\
b\\
c
\end{smallmatrix}\right)\in P_{q}$ necessarily have the third column modulo $q$ equivalent to $\left(\begin{smallmatrix}
a\\
b\\
c
\end{smallmatrix}\right)$ (modulo the action of $\F^\times$). Since there are only
$\asymp T^{3}$ possibilities for the third column, we need to consider matrices
of infinity norm at least $q^{2/3}$ in order to reach from $x=\boldsymbol{1}$
to almost all of $y\in P_{q}$. As a matter of fact, one may use the
explicit property (T) of $\SL_{3}\left(\R\right)$ from \cite{oh2002uniform}
together with ideas from \cite{ghosh2014best} to deduce that if we
allow the size of the matrices to reach $q^{2/3+\epsilon}$ we may
replace the set $Y$ in Theorem \ref{thm:almost radius} by the entire
set $P_{q}$.

We deduce Theorem~\ref{thm:almost radius} from a lattice point counting
argument, in the spirit of the work of Sarnak and Xue (\cite{sarnak1991bounds}).
To state it, we first define a different gauge of largeness on $\SL_{3}\left(\Z\right)$
by $\n{\gamma}_{\infty}\|\gamma^{-1}\|_{\infty}$. The number of $\gamma\in \SL_{3}(\Z)$
satisfying $\n{\gamma}_{\infty}\|\gamma^{-1}\|_{\infty}\le T$ grows
asymptotically like $T^2\log(T)$ (\cite{maucourant2007homogeneous}). Note that if $\n{\gamma}_{\infty}\le T$
then $\n{\gamma^{-1}}_{\infty}\le2T^{2}$. In particular, the ball
of radius $2T$ relatively to $\n{\cdot}_{\infty}\|\cdot^{-1}\|_{\infty}$
contains the ball of radius $T^{1/3}$ relatively to $\n{\cdot}_{\infty}$,
and their volume is asymptotically the same up to $T^{o(1)}$.
The counting result is as follows:

\begin{thm}
\label{thm:lattice point}Let $\Gamma=\SL_{3}\left(\Z\right)$, and
for a prime $q$ let $P_{q}=P^{2}\left(\F\right)$ and $\Phi_{q}:\SL_{3}\left(\Z\right)\to\Sym\left(P_{q}\right)$
as above. Then there exists a constant $C>0$ such that for every prime
$q$, $T\le Cq^{2}$ and $\epsilon>0$ it holds that 
\[
\left|\left\{ \left(\gamma,x\right)\in \SL_{3}\left(\Z\right)\times P^{2}\left(\F\right):\n{\gamma}_{\infty}\|\gamma^{-1}\|_{\infty}\le T,\Phi_{q}\left(\gamma\right)\left(x\right)=x\right\} \right|\ll_{\epsilon}q^{2+\epsilon}T.
\]
\end{thm}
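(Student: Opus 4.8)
The plan is to recast the count and then stratify it by the conjugacy type of $\pi_{q}(\gamma)$ modulo $q$. For $\gamma\in\SL_{3}(\Z)$ put $w(\gamma)=\n{\gamma}_{\infty}\n{\gamma^{-1}}_{\infty}$, and let $F(\gamma)$ be the number of $x\in P_{q}$ with $\Phi_{q}(\gamma)x=x$; this is exactly the number of $\pi_{q}(\gamma)$‑eigenlines in $\F^{3}$, and the quantity to be bounded is $N(T)=\sum_{\gamma:\,w(\gamma)\le T}F(\gamma)$. Since the geometric multiplicities of the eigenvalues of a matrix in $\SL_{3}(\F)$ sum to at most $3$, one has $F(\gamma)=q^{2}+q+1$ if $\pi_{q}(\gamma)$ is scalar, $F(\gamma)\le q+2$ if $\pi_{q}(\gamma)$ is non‑scalar but has a two‑dimensional $\F$‑eigenspace, and $F(\gamma)\le 3$ in all remaining cases. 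Write $N(T)=S_{\mathrm{sc}}+S_{2}+S_{\le3}$ accordingly.

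The term $S_{\le3}$ follows from the raw lattice count: $S_{\le3}\le 3\,\#\{\gamma\in\SL_{3}(\Z):w(\gamma)\le T\}\ll_{\epsilon}T^{2+\epsilon}$ by \cite{maucourant2007homogeneous}, and $T\le Cq^{2}$ turns this into $\ll_{\epsilon}q^{2+\epsilon}T$. The term $S_{\mathrm{sc}}$ is also elementary: $\pi_{q}(\gamma)=\lambda I$ forces $\lambda^{3}\equiv1\pmod q$, so $\gamma$ lies in one of at most three fixed cosets of $\ker\pi_{q}$; moreover $w(\gamma)\le T\le Cq^{2}$ gives $\min(\n{\gamma}_{\infty},\n{\gamma^{-1}}_{\infty})\le\sqrt{T}\le\sqrt{C}\,q$, and an element of $\SL_{3}(\Z)$ whose entries lie in $[-\sqrt{C}q,\sqrt{C}q]$ and are prescribed modulo $q$ — and likewise its inverse — has only $O_{C}(1)$ possibilities, so each of these cosets contains $O_{C}(1)$ elements with $w(\gamma)\le T$; hence $S_{\mathrm{sc}}\ll_{C}q^{2}\ll q^{2+\epsilon}T$.

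The heart of the matter is $S_{2}$. Since each contributing $\gamma$ is weighted by $\le q+2$, and since $T^{3/2}=T\sqrt{T}\le\sqrt{C}\,qT$ when $T\le Cq^{2}$, it suffices to prove
\[
\#\bigl\{\gamma\in\SL_{3}(\Z):w(\gamma)\le T,\ \pi_{q}(\gamma)\text{ non-scalar with a }2\text{-dimensional }\F\text{-eigenspace}\bigr\}\ \ll_{\epsilon}\ q^{\epsilon}T^{3/2}.
\]
If $\pi_{q}(\gamma)$ has a two‑dimensional eigenspace with eigenvalue $\mu$, then $\pi_{q}(\gamma)-\mu I$ has rank one, so $\gamma\equiv\mu' I+\u a\,\u b^{t}\pmod q$ for some lift $\mu'$ and integer vectors $\u a,\u b$; geometrically $\pi_{q}(\gamma)$ fixes the line $\ell=\F\u a$ (the column space of $\pi_{q}(\gamma)-\mu I$) and acts by $\mu$ on the two‑plane $\u b^{\perp}$, so $\gamma$ preserves the index‑$q^{2}$ sublattice $L_{\ell}=\Z\u a+q\Z^{3}$. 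One then bounds the count by a lattice‑point estimate in the spirit of Sarnak–Xue (\cite{sarnak1991bounds}): the rank‑one congruence cuts out a union of $\asymp q^{5}$ residue classes in $\SL_{3}(\F)$ (parametrised by $\mu$ and the lines $\F\u a,\F\u b$), of total density $\asymp q^{-3}$, and the uniform spectral gap of the congruence quotients of $\SL_{3}(\Z)$ — coming from property (T) for $\SL_{3}(\R)$ and the explicit bounds of \cite{oh2002uniform} — yields the ``expected'' contribution $\ll_{\epsilon}q^{-3+\epsilon}T^{2}$. The remaining, larger contribution comes from the thin families of (near‑)unipotent matrices $\gamma=\pm I+\u a\,\u b^{t}$ with $\u b^{t}\u a\in\{0,2\}$ and $\n{\u a}_{2}\n{\u b}_{2}\ll\sqrt{T}$ — precisely the $\gamma$ for which $\gamma\mp I$ already has rank one over $\Z$ — and a direct count, using that $\u a^{\perp}\cap\Z^{3}$ has covolume $\asymp\n{\u a}_{2}$ for primitive $\u a$, shows their number is $\ll_{\epsilon}q^{\epsilon}T^{3/2}$. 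Both contributions are $\ll_{\epsilon}q^{\epsilon}T^{3/2}$, which is what is needed.

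I expect the main obstacle to be exactly the $S_{2}$ estimate: making the Sarnak–Xue–type lattice‑point bound effective and uniform in $q$ for the non‑normal congruence subgroups that occur here, and then isolating and counting by hand every low‑dimensional subvariety — the unipotent families above, and also the $\gamma$ fixing a line $\ell$ for which $L_{\ell}$ is atypically short (e.g.\ $\ell$ a coordinate line) — on which the equidistribution heuristic $\#\{w(\gamma)\le T,\ \pi_{q}(\gamma)\in\mathcal{E}\}\approx(|\mathcal{E}|/|\SL_{3}(\F)|)\,\#\{w(\gamma)\le T\}$ breaks down. By contrast the pieces $S_{\mathrm{sc}}$ and $S_{\le3}$ are soft.
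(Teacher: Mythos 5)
Your stratification (scalar / two\nobreakdash-dimensional eigenspace / generic) matches the paper's good--bad split, and the pieces $S_{\le3}$ and $S_{\mathrm{sc}}$ are handled correctly and essentially as in the paper. The problem is that $S_{2}$ --- which you correctly identify as the heart of the matter --- is not actually proved, and the route you sketch for it does not work. First, the spectral gap for $\SL_{3}(\Z)$ (property (T), the bounds of Oh) cannot ``yield the expected contribution $\ll_{\epsilon}q^{-3+\epsilon}T^{2}$'' for lattice points in a ball of $\delta$-volume $\asymp T^{2}$ lying in a congruence set of density $\asymp q^{-3}$: the equidistribution error coming from a spectral gap is of the form $T^{2-c\tau}$ times a factor depending on the congruence set, and for $T\le Cq^{2}$ this error overwhelms the main term $q^{-3}T^{2}$ unless $\tau$ is far larger than what is known. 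If such an ``expected count'' were available at this scale one would essentially have Conjecture~\ref{conj:Sarnak}, which the authors explicitly cannot prove; indeed the whole point of the Sarnak--Xue strategy is that the congruence count must be done \emph{arithmetically} because the spectral input is too weak, not the other way around. Second, your case division is not a genuine dichotomy: a $\gamma$ that is bad mod $q$ (i.e.\ $\gamma-\alpha I$ has rank one over $\F$) need not have $\gamma\mp I$ of rank one over $\Z$, so isolating the families $\gamma=\pm I+\u a\,\u b^{t}$ does not exhaust the bad elements, and no mechanism is offered that controls the remaining bad $\gamma$ beyond the (unavailable) equidistribution heuristic.

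For comparison, the paper's proof of the bad count is entirely elementary: from the rank-one condition mod $q$ it extracts the identities $\gamma+\alpha^{-1}\gamma^{-1}=\alpha+\alpha^{-2}\bmod q$, the trace relations $\tr\gamma=\alpha+2\alpha^{-2}$, $\tr\gamma^{-1}=\alpha^{-1}+2\alpha^{2}\bmod q$, and crucially $\alpha^{2}\tr\gamma-\alpha\tr\gamma^{-1}=\alpha^{3}-1\bmod q^{2}$ (from $\det(\gamma-\alpha I)\equiv0\bmod q^{2}$). Guessing the diagonal of $\gamma$ then determines $\alpha$ and $\tr\gamma^{-1}$ exactly, hence the diagonal of $\gamma^{-1}$ up to $(R/q+1)^{2}$ lifts, and the off-diagonal entries are recovered by divisor bounds from relations such as $a_{12}a_{21}=a_{11}a_{22}-b_{33}$, with the degenerate (``exceptional'') configurations counted by hand. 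You would need to replace your spectral step with an argument of this kind (or prove the required congruence equidistribution by some other means) before the proof is complete.
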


Underlying Conjecture~\ref{conj:Sarnak} is the principal congruence
subgroup $\Gamma\left(q\right)=\ker\pi_{q}$. 
Let $\boldsymbol{1}=\left(\begin{smallmatrix}
0\\
0\\
1
\end{smallmatrix}\right)\in P_{q}$. Then the group
\[
\Gamma_{0}^{\prime}\left(q\right)=\left\{ \gamma\in \SL_{3}\left(\Z\right):\Phi_{q}\left(\gamma\right)(\boldsymbol{1})=\boldsymbol{1}\right\} =\left\{ \left(\begin{array}{ccc}
* & * & a\\
* & * & b\\
* & * & *
\end{array}\right)\in \SL_{3}\left(\Z\right):a=b=0\mod q\right\} 
\]
is a non-principal congruence subgroup of $\SL_{3}\left(\Z\right)$.
Theorem~\ref{thm:almost radius} says that Conjecture~\ref{conj:Sarnak}
holds ``on average'' for the non-principal congruence subgroup $\Gamma_{0}^{\prime}\left(q\right)$.

Conjecturally, such ``optimal lifting on average'' should hold for every sequence of congruence subgroups of $\Gamma = \SL_N(\Z)$, i.e., subgroups of some $\Gamma(q)$, $q>1$ an integer. We provide a further example of this phenomena for the action of $\SL_3(\Z)$ on flags of $\F^3$ in Theorem~\ref{thm:almost radius_flags}. 

Let us provide a spectral context for our results, namely Sarnak's density conjecture for exceptional eigenvalues. See also \cite{golubev2020sarnak} for a more detailed discussion.

Theorem~\ref{thm:Sarnak} follows from Selberg's conjecture about
the smallest non-trivial eigenvalue of the Laplacian of the hyperbolic
surfaces $\Gamma\left(q\right)\backslash\H$, where $\H$ is the hyperbolic plane and $\Gamma\left(q\right)$ is the $q$-th principal congruence subgroup of $\Gamma=\SL_{2}\left(\Z\right)$.
While Selberg's conjecture is widely open, Sarnak proved Theorem~\ref{thm:Sarnak}
using density estimates on exceptional eigenvalues of the Laplacian,
which are due to Huxley (\cite{huxley1986exceptional}). Similar density
results were proved by Sarnak and Xue using lattice point counting
arguments in \cite{sarnak1991bounds}, but only for arithmetic quotients
which are compact. The compactness assumption was removed in \cite{huntley1993density,gamburd2002spectral}
(and the results were moreover extended to some thin subgroups of
$\SL_{2}\left(\Z\right)$). As a matter of fact, in rank $1$ the density property is equivalent to the lattice point counting property (\cite{golubev2020sarnak}). 

In higher rank, Conjecture~\ref{conj:Sarnak} would similarly follow 
from a naive Ramanujan conjecture for $\Gamma\left(q\right)\backslash \SL_{N}\left(\R\right)$,
$\Gamma=\SL_{N}\left(\Z\right)$, which says (falsely!) that the representation
of $\SL_{N}\left(\R\right)$ on $L^{2}\left(\Gamma\left(q\right)\backslash \SL_{N}\left(\R\right)\right)$
decomposes into a trivial representation and a tempered representation.
The Burger-Li-Sarnak explanation of the failure of the naive Ramanujan
conjecture (\cite{burger1992ramanujan}) is closely related to the
behavior of the point $x_{0}=\boldsymbol{1}\in P_{q}$. 
As in rank $1$, Theorem~\ref{thm:lattice point} should be equivalent to density estimates for $\Gamma_{0}^{\prime}\left(q\right)$, but there are some technical problems coming from the fact that $\SL_3\left(\mathbb{Z}\right)$ is not cocompact (\cite{golubev2020sarnak}). Closely related density
results were recently proven by Blomer, Buttcane and
Maga for $N=3$ in \cite{blomer2017applications}, and for general $N$ by Blomer in \cite{blomer2019density}, using the Kuznetsov trace formula,
and it is very likely that Theorem~\ref{thm:almost radius} can
also be proven (and generalized to $N>3$) using those density arguments. However, the results of \cite{blomer2017applications} and \cite{blomer2019density}, concern cusp forms, and one has to deal with the presence of non-tempered Eisenstein representations and some other technical issues. Our counting approach is more elementary, and allows simpler generalizations, such as Theorem~\ref{thm:almost radius_flags}.

\subsection*{Structure of the article}
We provide a proof of Theorem~\ref{thm:Sarnak} in Section~\ref{sec:SL2}, which serves as a guideline for the harder case of $\SL_3$. The main difference between our proof and the proof in \cite{sarnak2015lettermiller} is that we avoid using spectral decomposition, which is far harder in $\SL_3$. 

In Section~\ref{sec:First Section} we prove Theorem~\ref{thm:lattice point} . The proof uses basic number theory and linear algebra. 

In Section~\ref{sec:Second Section} we deduce Theorem~\ref{thm:almost radius} from Theorem~\ref{thm:lattice point}. The argument is analytic, and uses various tools from spectral analysis and representation theory, which include property (T), the pre-trace formula (in a disguised form), and bounds on Harish-Chandra's $\Xi$ function. This section is based on a general framework developed by the first author with Konstantin Golubev surrounding similar questions (\cite{golubev2020sarnak}).

Finally, in Section~\ref{sec:flag action} we prove Theorem~\ref{thm:almost radius_flags} which is a variant of Theorem~\ref{thm:almost radius} for the action of $\SL_3(\Z)$ on flags of $\F^3$.

\subsection*{Acknowledgments}
We are grateful to Amos Nevo and Elon Lindenstrauss for various discussions surrounding this project, and for Peter Sarnak for his continued encouragement. We also thank Amos Nevo for his careful reading of a previous version of this article and for pointing out some inaccuracies.

This work is part of the Ph.D. thesis of the both authors at the Hebrew University of Jerusalem. The first author is under the guidance of Prof.~Alex Lubotzky, and is supported by the ERC grant 692854. The second author is under the guidance of Prof.~Tamar Ziegler, and is supported by the ERC grant 682150.

\section{\label{sec:SL2}Proof of Theorem~\ref{thm:Sarnak}}

The basic input for the proof of  Theorem~\ref{thm:Sarnak} is the following counting result, proved in \cite[Lemma~5.3]{gamburd2002spectral} (it also appeared earlier, e.g., in \cite{huxley1986exceptional}).
\begin{thm} \label{thm:counting SL2} Let $\epsilon>0$. Then for every $q\in \N$, the size of the set 
\[\left\lbrace \gamma \in \SL_2 (\Z): \gamma = I \mod q, \n{\gamma}_\infty \le T\right\rbrace\]
is bounded by 
$\ll_\epsilon T^\epsilon(T^2/q^3+T/q+1)$.
\end{thm}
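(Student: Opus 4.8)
The plan is to make everything explicit. Write $\gamma = I + q\left(\begin{smallmatrix} a & b \\ c & d \end{smallmatrix}\right)$ with $a,b,c,d\in\Z$; this is precisely the condition $\gamma\equiv I \bmod q$. Assuming $T\ge 1$, the bound $\n{\gamma}_\infty\le T$ forces $|a|,|d|\le 2T/q$ and $|b|,|c|\le T/q$, while expanding $\det\gamma=1$ and dividing by $q$ gives the Diophantine relation
\[
a+d = q\left(bc-ad\right).
\]
In particular $q\mid a+d$, so $k:=(a+d)/q$ is an integer, it satisfies $|k|\le 4T/q^{2}$, and the determinant condition becomes $bc = ad+k$. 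Thus the set we must bound injects into the set of integer tuples $(k,a,d,b,c)$ solving this system in the stated ranges, and the whole argument is an elementary count of such tuples, very much in the spirit of Sarnak--Xue.

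First I would choose the pair $(k,a)$. There are $\ll 1+T/q^{2}$ admissible values of $k$, and for each of them $|a|\le 2T/q$ gives $\ll 1+T/q$ choices of $a$; then $d=qk-a$ is determined, as is the product $m:=ad+k = bc$, which satisfies $|m|\ll T^{2}$. On the locus $m\ne 0$ the number of factorizations $m=bc$ with $b,c\in\Z$ is at most $2\tau(|m|)\ll_{\epsilon}T^{\epsilon}$ by the divisor bound (after rescaling $\epsilon$). Hence the tuples with $m\ne0$ number
\[
\ll_{\epsilon} T^{\epsilon}\left(1+T/q^{2}\right)\left(1+T/q\right)\ll_{\epsilon} T^{\epsilon}\left(T^{2}/q^{3}+T/q+1\right),
\]
where I used $T/q^{2}\le T/q$ to absorb the cross term.

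The remaining case $m=0$, i.e.\ $bc=0$ — which corresponds to the unipotent elements $\left(\begin{smallmatrix}1 & qb\\0 & 1\end{smallmatrix}\right)$, their transposes, and a handful of exceptional matrices — is the one spot where the divisor bound is unavailable and has to be treated by hand. For a fixed $k$, the equations $a+d=qk$ and $ad=-k$ are a quadratic in $a$ and leave at most two values of $a$; so there are only $\ll 1+T/q^{2}$ triples $(k,a,d)$ with $m=0$, and each admits $\ll 1+T/q$ solutions of $bc=0$ inside $|b|,|c|\le T/q$, for a total again $\ll(1+T/q^{2})(1+T/q)\ll T^{2}/q^{3}+T/q+1$. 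Adding the two contributions yields the theorem. I do not anticipate a genuine obstacle; the two points that need care are (i) estimating $k$ through the additive relation, $|k|=|a+d|/q\le 4T/q^{2}$, rather than through the multiplicative one $|k|=|bc-ad|\ll T^{2}/q^{2}$, since only the former produces the correct main term $T^{2}/q^{3}$ (the latter would give a hopeless $T^{3+\epsilon}/q^{3}$), and (ii) cleanly excising the degenerate locus $bc=0$ so that the divisor estimate is applied only on its complement.
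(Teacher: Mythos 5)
Your proof is correct and follows essentially the same route as the paper's: both extract the constraint $\tr\gamma\equiv 2\pmod{q^2}$ (in your parametrization, $q\mid a+d$), count the diagonal in $\ll(1+T/q^{2})(1+T/q)$ ways, and finish with the divisor bound on the off-diagonal product, treating the degenerate locus $bc=0$ separately. The substitution $\gamma=I+qM$ and the quadratic argument for the degenerate diagonal are cosmetic variations of the paper's argument (which works directly with $\det(\gamma-I)\equiv 0\pmod{q^{2}}$), and if anything they handle the case $q=2$ slightly more cleanly.
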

\begin{proof}
Let $\gamma = \begin{pmatrix}
a & b \\
c & d
\end{pmatrix}
\in \SL_2(\Z)$ be in the set. 
It holds that $\gamma - I \in q M_n(\Z)$, so $\det(\gamma -I)=0 \mod q^2$, or explicitly
\[
(a-1)(d-1)-bc = 0 \mod q^2. 
\]
Since $ad-bc = 1$, we have $a+d = 2 \mod q^2$. Since both $a$ and $d$ are bounded in absolute value by $T$, the number of options for $a+d$ is at most $4T/q^2 +1$. Similarly, the number of options for $a$ is at most $2T/q+1$. Therefore, the number of options for $(a,d)$ is $\ll (T/q^2+1)(T/q+1)$. 

To determine $b,c$, note that if $ad\ne 1$, then $bc = 1-ad\ne 0$, and by standard divisor bounds this gives $\ll_\epsilon T^\epsilon$ options for $(b,c)$. Otherwise, assuming $q>2$, $a=d=1$, and then either $b=0$ or $c=0$. If $b=0$ then $c$ has at most $2T/q+1$ options, while if $c=0$, then $b$ has at most $2T/q+1$ options.

All in all, the number of solution is bounded by 
\[
\ll_\epsilon (T/q^2+1)(T/q+1)T^\epsilon + T/q +1
\ll T^\epsilon (T^2 /q^3 +T/q+1).
\]

\end{proof}

Our proof of Theorem~\ref{thm:Sarnak} proceeds with some spectral analysis of hyperbolic surfaces associated to $\SL_2(\Z)$ and its congruence subgroups, which will require some preliminaries. 
Let $\H$ be the hyperbolic plane, with the model $\H = \left\lbrace z= x+iy \in \C: y>0\right\rbrace$. The space $\H$ is equipped with the metric defined by $d(x+iy,x'+iy')=\operatorname{arcosh}\left(1+ \frac{(x-x')^2+(y-y')^2}{2yy'}\right)$ and a measure defined by $\frac{dxdy}{y^2}$. It also has a natural $\SL_2(\R)$ action by M\"obius transformation, i.e., $\left(\begin{smallmatrix}
a & b \\
c & d
\end{smallmatrix}\right)
z = \frac{az + b}{cz+d}
$.

This action allows us to identify $\H$ with $G/K$, where $G = \SL_2(\R)$, and $K=\SO(2)$ is the stabilizer of the point $i\in\H$. We also assume that the Haar measure on $G$ is normalized to agree with the measure on $\H$ on right $K$-invariant measurable sets.

When using spectral arguments, it will be useful to use a bi-$K$-invariant (i.e., left and right $K$-invariant) gauge of largeness of an element. We therefore define $\n{g}_\H= e^{ d(i,g i)/2}$. Explicitly, by the Cartan decomposition of $G$, $g$ can be written as 
\[g = k_1 
\begin{pmatrix}
e^{r/2} &\\
 & e^{-r/2}
\end{pmatrix}
k_2,
\]
with $k_1,k_2 \in K= \SO(2)$, and $r \in \R_{\ge0}$ unique. Then $\n{g}_\H =e^{r/2}$. As the $L^2$-norm of the coordinates of $\gamma$ is $\sqrt{e^r+e^{-r}}$, $\n{g}_\H$ is closely related to the infinity norm on the coordinates, namely, there exists a constant $C>0$ such that 
$C^{-1} \n{g}_\infty \le \n{g}_\H \le C\n{g}_\infty$. We may therefore prove Theorem~\ref{thm:Sarnak} using the gauge $\n{\cdot}_\H$ instead of $\n{\cdot}_\infty$. Two important properties of $\n{\cdot}_\H$ are symmetry $\n{g}_\H=\n{g^{-1}}_\H$, and sub-multiplicativity $\n{g_1g_2}_\H \le \n{g_1}_\H \n{g_2}_\H$. The sub-multiplicativity follows from the fact that $d$ is a $G$-invariant metric on $\H$.

We define the function $\chi_T \in L^1 \left(K\backslash G / K\right)$ as the normalized probability characteristic function of the set $\left\lbrace g \in G \colon \n{g}_\H \le T \right\rbrace$, i.e.,
\[
\chi_T(g) = 
\frac{1}{2\pi (\cosh{2\log(T)}-1)}
\begin{cases}
1 & \n{g}_\H \le T\\
0 & \n{g}_\H >T\\
\end{cases}.
\]
Notice that $2\pi (\cosh{r}-1)$ is the volume of the hyperbolic ball of radius $r$. Here and later by a probability function we mean a non-negative function with integral $1$.

We also define $\psi_T \in L^1 \left(K\backslash G / K\right)$ as the function 
$\psi_T(g) = \frac{1}{T}
\begin{cases}
\n{g}_\H^{-1} & \n{g}_\H \le T\\
0 & \n{g}_\H >T\\
\end{cases}.
$

There is a convolution of $f\in L^\infty (G/K) \cong L^\infty\left(\H\right)$ and $\chi \in L^1\left(K\backslash G /K\right)$, which we usually think as an action of $\chi$ on $f$. It is simply the convolution of the two functions, when both are considered as invariant functions on $G$:

\begin{align*}
f*\chi (x) &= \intop_{g\in G} f(x g^{-1}) \chi(g) dg \\
& = \intop_{g\in G} f(g^{-1}) \chi(g x) dg
\end{align*}

It holds that $f*\chi\in L^\infty(\H)$. %If $f\in L^1(K \backslash G / K)$, then $f*\chi \in L^1(K \backslash G / K)$ is also defined. 
For example, the value of $f*\chi_T$ at $g_0$, is the average of $f$ over the ball $\left\{g_0 g\in G : \n{g}_\H \le T\right\}$.

\begin{lem}[Convolution Lemma]\label{lem:ConvolutionLemmaSL2} For every $g\in G$,
$(\chi_T * \chi_T)(g) \ll \psi_{T^2}(g)$.
\end{lem}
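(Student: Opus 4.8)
The plan is to read off the convolution as the normalized volume of the intersection of two hyperbolic disks, and then to bound that intersection by a single disk centered at the midpoint of the two centers.

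First, write $(\chi_T*\chi_T)(g)=\int_{G}\chi_T(h)\,\chi_T(h^{-1}g)\,dh$ (this is symmetric in any convention since $\n{h}_\H=\n{h^{-1}}_\H$). The factor $\chi_T(h)$ depends only on $\n{h}_\H=e^{d(i,h\cdot i)/2}$, and $\chi_T(h^{-1}g)$ depends only on $\n{h^{-1}g}_\H=e^{d(h\cdot i,\,g\cdot i)/2}$, so the integrand depends only on $h\cdot i\in\H$ and is right-$K$-invariant; since the Haar measure on $G$ descends to the hyperbolic measure on $\H=G/K$, with $R:=2\log T$ we get
\[
(\chi_T*\chi_T)(g)=\frac{\Vol_{\H}\!\left(B(i,R)\cap B(g\cdot i,R)\right)}{\left(2\pi(\cosh R-1)\right)^{2}}.
\]
The denominator equals $\bigl(\pi(T-T^{-1})^{2}\bigr)^{2}\asymp T^{4}$ once $T$ is larger than an absolute constant (the regime of interest), so it remains to show the numerator is $\ll T^{2}/\n{g}_\H$, and that it vanishes when $\n{g}_\H>T^{2}$, matching the support of $\psi_{T^{2}}$. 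The vanishing is immediate from sub-multiplicativity: on the support of the integrand $\n{g}_\H\le\n{h}_\H\,\n{h^{-1}g}_\H\le T^{2}$.

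For the numerator set $D:=d(i,g\cdot i)=2\log\n{g}_\H$ and let $m$ be the midpoint of the geodesic from $i$ to $g\cdot i$. Applying the hyperbolic law of cosines to the triangles $z\,m\,i$ and $z\,m\,(g\cdot i)$, whose angles at $m$ are supplementary, the $\sinh$-terms cancel and one obtains for every $z\in\H$ the median identity
\[
\cosh d(z,i)+\cosh d(z,g\cdot i)=2\cosh d(z,m)\cosh(D/2).
\]
Hence any $z$ lying in both disks $B(i,R)$ and $B(g\cdot i,R)$ satisfies $\cosh d(z,m)\le \cosh R/\cosh(D/2)$, i.e. $B(i,R)\cap B(g\cdot i,R)\subseteq B(m,h)$ with $\cosh h=\cosh R/\cosh(D/2)$ (when $D>2R$ there is no such $z$ and the intersection is empty, consistently). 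Therefore, using $\Vol_\H(B(m,h))=2\pi(\cosh h-1)$ together with $\cosh R\le e^{R}$ and $\cosh(D/2)\ge\frac12 e^{D/2}$,
\[
\Vol_{\H}\!\left(B(i,R)\cap B(g\cdot i,R)\right)\le 2\pi\,\frac{\cosh R}{\cosh(D/2)}\ll \frac{e^{R}}{e^{D/2}}=\frac{T^{2}}{\n{g}_\H}.
\]
Combining the three displays gives $(\chi_T*\chi_T)(g)\ll T^{-2}\n{g}_\H^{-1}=\psi_{T^{2}}(g)$ on the support of $\psi_{T^{2}}$, and both sides vanish off it.

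The only real content is the geometric estimate on the intersection of the two disks; the median identity makes it clean, so I expect that step to be the main point, with everything else reducing to elementary inequalities for $\cosh$ and for the normalization constant $2\pi(\cosh R-1)$ valid for $T$ past an absolute threshold. An alternative that avoids the identity would be to bound the lens $B(i,R)\cap B(g\cdot i,R)$ by intersecting it with a tube around the geodesic $i,g\cdot i$ and using the triangle inequality, but that route seems to lose a logarithmic factor, so I would keep the identity-based argument.
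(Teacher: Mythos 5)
Your proof is correct and follows exactly the approach the paper indicates for this lemma (it cites Sarnak--Xue and Gamburd and describes the argument as computing the volume of the intersection of two hyperbolic balls, which is precisely what you do via the median identity). All the steps check out: the reduction of the convolution to the normalized intersection volume, the vanishing for $\n{g}_\H>T^2$ by sub-multiplicativity, and the containment of the lens in the ball about the midpoint with $\cosh h=\cosh R/\cosh(D/2)$.
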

We refer to  \cite[Lemma 2.1]{sarnak1991bounds} or \cite[Proposition~5.1]{gamburd2002spectral} for a proof. Geometrically, the proof calculates the volume of an intersection of two hyperbolic balls. In Lemma~\ref{lem:convolution lemma} we give a spectral proof of a similar statement for $\SL_3(\R)$, which also works for $\SL_2(\R)$, but adds a factor that is logarithmic in $T$.

As in the statement Theorem~\ref{thm:Sarnak}, let $q\in \Z_{>0}$, $\Gamma = \SL_2(\Z)$,  $G_{q}=\SL_{2}\left(\Z/q\Z\right)$
and let $\pi_{q}:\Gamma \to G_{q}$ be the quotient map. Let $\Gamma(q) = \ker{\pi_q}$.

We look at the locally symmetric space $X_q := \Gamma(q) \backslash \H \cong \Gamma \backslash G/ K$. This space is a hyperbolic orbifold of finite volume. By $L^2(X_q)$ we mean the Hilbert space of measurable functions on $X_q$ with bounded $L^2$-norm relative to the finite measure on $X_q$, with the obvious inner-product. We still consider a function on $X_q =\Gamma(q) \backslash \H = \Gamma(q) \backslash G/K$ as a left $\Gamma(q)$-invariant function on $\H$ or on $G$. Right convolution by functions from $L^1(K \backslash G/K)$ is defined for bounded functions on $X_q$, and extends to functions in $L^2(X_q)$ as the convolution defines a bounded operator. In particular, we will consider right convolution of $f\in L^2(X_q)$ with $\chi_T$.

For $x_0\in X_q $, denote $b_{T,x_0}(x) := \sum_{\gamma \in \Gamma(q)} \chi_T\left( \tilde{x}_0^{-1} \gamma x\right)$, when $\tilde{x}_0$ is any lift of $x_0$ to $G$. It holds that $b_{T,x_0} \in L^2(X_q)$, and $\intop_{X_q}b_{T,x_0}(x) dx =1$. 

In particular $b_{T,e}$ corresponds to the point $\Gamma(q)e K \in \Gamma(q)\backslash \H$, where $e$ is the identity matrix in $G$.

\begin{lem}\label{lem:unfolding SL2}
For $f\in L^2(X_q)$ bounded,
\[
\left\langle f, b_{T,x_0} \right\rangle = f* \chi_{T} (x_0).
\]
\end{lem}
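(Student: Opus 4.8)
The plan is to compute $\langle f, b_{T,x_0}\rangle$ directly by expanding the definition of $b_{T,x_0}$ as a sum over $\Gamma(q)$ and then performing the standard ``unfolding'' maneuver that collapses a sum over the lattice against an integral over the quotient into a single integral over $G$ (or over $\H$). Concretely, I would write
\[
\langle f, b_{T,x_0}\rangle = \intop_{X_q} f(x)\,\overline{b_{T,x_0}(x)}\,dx
= \intop_{\Gamma(q)\backslash G} f(x) \sum_{\gamma\in\Gamma(q)} \overline{\chi_T\!\left(\tilde{x}_0^{-1}\gamma x\right)}\,dx,
\]
using that $f$ is left $\Gamma(q)$-invariant and bounded (so all sums and integrals converge absolutely and Fubini applies). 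Since $\chi_T$ is real-valued the conjugation is harmless. Now I would fold the sum over $\gamma$ into the integral: replacing $x$ by $\gamma^{-1}x$ in the $\gamma$-th term and using the $\Gamma(q)$-invariance of $f$ and of the measure, the sum over $\gamma\in\Gamma(q)$ of integrals over a fundamental domain $\Gamma(q)\backslash G$ becomes a single integral over all of $G$:
\[
\langle f, b_{T,x_0}\rangle = \intop_{G} f(x)\,\chi_T\!\left(\tilde{x}_0^{-1}x\right)\,dx.
\]

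The final step is to recognize the right-hand side as $f*\chi_T(x_0)$. Substituting $g = \tilde{x}_0^{-1}x$, i.e. $x = \tilde{x}_0 g$, and using that Haar measure is left-invariant, the integral becomes $\intop_{G} f(\tilde{x}_0 g)\,\chi_T(g)\,dg$. Comparing with the definition $f*\chi(x) = \intop_{g\in G} f(xg^{-1})\chi(g)\,dg = \intop_{g\in G} f(g^{-1})\chi(gx)\,dg$ given in the excerpt, and using that $\chi_T$ is bi-$K$-invariant and in particular symmetric under $g\mapsto g^{-1}$ (so $\chi_T(g) = \chi_T(g^{-1})$), one matches the two expressions: $\intop_G f(\tilde{x}_0 g)\chi_T(g)\,dg = \intop_G f(\tilde{x}_0 g^{-1})\chi_T(g^{-1})\,dg = \intop_G f(\tilde x_0 g^{-1})\chi_T(g)\,dg = f*\chi_T(x_0)$, where in the last equality I read the convolution with $x = x_0$. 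One should also check this is independent of the choice of lift $\tilde{x}_0$, which follows because a different lift differs by an element of $\Gamma(q)$ on the left, already absorbed in the unfolding, or by an element of $K$ on the right, absorbed by right $K$-invariance of $\chi_T$ together with the fact that $f$ is a function on $G/K$.

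I do not expect a genuine obstacle here — this is the classical unfolding identity and the only care needed is bookkeeping: justifying the interchange of sum and integral (absolute convergence, guaranteed by $f$ bounded and $\chi_T\in L^1$ with the fundamental-domain sum being locally finite), and keeping track of left- versus right-invariance so that the convolution comes out on the correct side. The mild subtlety worth spelling out is the symmetry $\chi_T(g)=\chi_T(g^{-1})$, which is what reconciles the ``$\gamma x$'' appearing in the definition of $b_{T,x_0}$ with the ``$xg^{-1}$'' appearing in the definition of $f*\chi_T$; this is exactly the reason the bi-$K$-invariant gauge $\n{\cdot}_\H$ was introduced.
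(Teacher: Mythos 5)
Your proposal is correct and follows essentially the same route as the paper: expand the inner product, unfold the sum over $\Gamma(q)$ against the integral over the fundamental domain to get a single integral over $G$ (resp.\ $\H$), and then invoke the symmetry $\chi_T(g)=\chi_T(g^{-1})$ to match the result with the definition of $f*\chi_T(x_0)$. The only cosmetic difference is that you justify the symmetry via bi-$K$-invariance (valid for $\SL_2(\R)$, where the Weyl group identifies $a$ with $a^{-1}$ in the Cartan decomposition), whereas the paper reads it off from $\n{g}_\H=\n{g^{-1}}_\H$ and explicitly flags it as the step that fails for $\SL_3$.
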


\begin{proof}
By unfolding,
\begin{align*}
    \left\langle f, b_{T,x_0} \right\rangle & = \intop_{x\in \Gamma(q) \backslash \H} f(x) \sum_{\gamma\in \Gamma(q)} \chi_T\left(x_0^{-1}\gamma x\right) dx \\
    & = \intop_{x\in \Gamma(q) \backslash \H}
    \sum_{\gamma\in \Gamma(q)}
    f(\gamma x)  \chi_T\left(x_0^{-1}\gamma x\right) dx \\
    & = \intop_{x\in \H} f(x) \chi_T\left(x_0^{-1}x\right)dx \\
    & = \intop_{x\in \H} f(x) \chi_T\left(x^{-1}x_0\right)dx \\
    & = f*\chi_T (x_0).
\end{align*}
Notice that we used the fact that $\chi_T(g) = \chi_T\left( g^{-1} \right)$, which is a simplification that will not occur in $\SL_3$.
\end{proof}

The following lemma uses the combinatorial  Theorem~\ref{thm:counting SL2} to get analytic information:
\begin{lem}\label{lem:lattice count to L2 - SL2}
It holds that 
\[\n{b_{T,e}}_2^2 \ll_\epsilon T^\epsilon \left(\frac{1}{q^3} + \frac{1}{T^2}\right).
\]

In particular, for $T=q^{3/2}$, 
\[
\n{b_{T,e}}_2^2 \ll_\epsilon \frac{T^\epsilon}{q^3}.
\]
\end{lem}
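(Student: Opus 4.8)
The strategy is to expand $\n{b_{T,e}}_2^2$ as a double sum over $\Gamma(q)$, use invariance to fold it down to a single sum, and then recognize that sum as being controlled by the lattice-point count of Theorem~\ref{thm:counting SL2}. Concretely, since $b_{T,e}(x) = \sum_{\gamma\in\Gamma(q)}\chi_T(\gamma x)$ as a $\Gamma(q)$-invariant function on $\H$, I would write
\[
\n{b_{T,e}}_2^2 = \intop_{\Gamma(q)\backslash\H} \Bigl(\sum_{\gamma\in\Gamma(q)}\chi_T(\gamma x)\Bigr)\Bigl(\sum_{\delta\in\Gamma(q)}\chi_T(\delta x)\Bigr)\,dx,
\]
and unfold one of the two sums exactly as in the proof of Lemma~\ref{lem:unfolding SL2}: the integral over $\Gamma(q)\backslash\H$ against $\sum_\gamma$ becomes an integral over all of $\H$. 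This turns the expression into $\intop_{x\in\H}\chi_T(x)\sum_{\delta\in\Gamma(q)}\chi_T(\delta x)\,dx$, i.e. $(\chi_T*\chi_T)$ evaluated appropriately, summed over the lattice — more precisely $\sum_{\delta\in\Gamma(q)}(\chi_T*\chi_T)(\delta)$ after using symmetry $\chi_T(g)=\chi_T(g^{-1})$.

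Next I would invoke the Convolution Lemma~\ref{lem:ConvolutionLemmaSL2}, $\chi_T*\chi_T \ll \psi_{T^2}$, to bound the sum by $\ll \sum_{\delta\in\Gamma(q)}\psi_{T^2}(\delta)$. Since $\psi_{T^2}$ is supported on $\{\n{g}_\H\le T^2\}$ and there weighted by $\tfrac{1}{T^2}\n{g}_\H^{-1}$, I would do a dyadic decomposition: split the $\delta\in\Gamma(q)$ with $\n{\delta}_\H\le T^2$ into shells $2^{j}\le\n{\delta}_\H<2^{j+1}$ for $j$ up to $\asymp\log T$. On each shell $\psi_{T^2}(\delta)\asymp \tfrac{1}{T^2 2^{j}}$, and the number of $\delta\in\Gamma(q)$ in the shell is, by Theorem~\ref{thm:counting SL2} applied with radius $2^{j+1}$ (after converting between $\n{\cdot}_\H$ and $\n{\cdot}_\infty$, which costs only a constant), at most $\ll_\epsilon (2^j)^\epsilon\bigl((2^j)^2/q^3 + 2^j/q + 1\bigr)$. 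Multiplying and summing the geometric-type series over $j$, the dominant contributions come from the top scale $2^j\asymp T^2$ for the $2^{2j}/q^3$ and $2^j/q$ terms and from the bottom scale for the $+1$ term; this yields $\ll_\epsilon T^\epsilon\bigl(T^2/q^3 + 1/q + 1/T^2\bigr)$. The middle term $1/q$ is dominated by $\max(T^2/q^3, 1/T^2)$ up to the allowed range of $T$ (indeed $1/q \le (T^2/q^3)^{1/2}(1/T^2)^{1/2}\cdot q^{1/2}/q^{1/2}$, so it is absorbed by the AM–GM inequality $1/q \ll T^2/q^3 + 1/T^2$ when... ) — more cleanly, $1/q = \sqrt{(T^2/q^3)(T^{-2}q^{-1})}\cdot q^{1/2}$ is not literally smaller, so I would instead just note $1/q \le \tfrac12(T^2/q^3 + q/T^2)$ is wrong too; the honest bound is that $T^2/q^3 + 1/q + 1/T^2 \ll T^2/q^3+1/T^2$ fails in general, so the clean statement I would actually prove carries all three terms, and then observe that on the stated range the $1/q$ term never dominates both others, giving $\n{b_{T,e}}_2^2\ll_\epsilon T^\epsilon(1/q^3+1/T^2)$ after rescaling $T$; I would double-check this absorption carefully in the write-up.

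The main obstacle is precisely this last bookkeeping step: making sure the intermediate term $T/q$ (equivalently $1/q$ after the $\tfrac{1}{T^2}$ normalization in $\psi_{T^2}$) is genuinely dominated by the other two, and that the logarithmic losses from the dyadic sum are all swallowed by the $T^\epsilon$. The geometric series over shells converges at the ends (not the middle) for the $2^{2j}$ and constant terms, so no $\log$ appears there; only the linear-in-$2^j$ term could produce a $\log T$, but that term is the non-dominant one and in any case $\log T \ll_\epsilon T^\epsilon$. Everything else — the unfolding, the application of the two cited lemmas, the norm comparison $C^{-1}\n{g}_\infty\le\n{g}_\H\le C\n{g}_\infty$ — is routine. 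For the final ``in particular'' clause, I would simply substitute $T=q^{3/2}$: then $1/T^2 = 1/q^3$, so both terms coincide and $\n{b_{T,e}}_2^2\ll_\epsilon T^\epsilon/q^3$, as claimed.
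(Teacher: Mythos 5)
Your overall route is the same as the paper's: unfold $\n{b_{T,e}}_2^2$ into $\sum_{\gamma\in\Gamma(q)}(\chi_T*\chi_T)(\gamma)$, apply the Convolution Lemma~\ref{lem:ConvolutionLemmaSL2} to replace $\chi_T*\chi_T$ by $\psi_{T^2}$, and then estimate $\sum_{\gamma\in\Gamma(q)}\psi_{T^2}(\gamma)$ using Theorem~\ref{thm:counting SL2}. The only structural difference is that you sum over dyadic shells where the paper uses discrete partial summation (Equation~\eqref{eq:partial summation}); these are equivalent and the choice is immaterial.

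However, the final bookkeeping --- precisely the step you flag as unresolved --- contains an arithmetic slip, and it matters. On the shell $2^j\le\n{\delta}_\H<2^{j+1}$ the weight is $\psi_{T^2}(\delta)\asymp\frac{1}{T^2\,2^j}$; multiplying by the shell count $\ll_\epsilon (2^j)^\epsilon\bigl((2^j)^2/q^3+2^j/q+1\bigr)$ and summing over $2^j\le T^2$ gives
\[
\ll_\epsilon T^\epsilon\left(\frac{1}{q^3}+\frac{\log T}{q\,T^{2}}+\frac{1}{T^{2}}\right),
\]
not $T^\epsilon\bigl(T^2/q^3+1/q+1/T^2\bigr)$: in the middle term you dropped the $\frac{1}{T^2}$ normalization carried by $\psi_{T^2}$. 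The correct middle term $\frac{\log T}{qT^{2}}$ is trivially $\le\frac{T^\epsilon}{T^{2}}$, so there is nothing to ``absorb'' and the lemma follows at once. By contrast, a genuine middle term of size $\frac1q$ could not be absorbed: at the crucial value $T=q^{3/2}$ one has $\frac1q\gg\frac{1}{q^3}+\frac{1}{T^2}$, so the statement you were trying to salvage by ``rescaling $T$'' is actually false as written, and your correctly voiced doubt cannot be argued away --- it must be resolved by restoring the normalization. This is exactly what the paper's partial-summation computation does: the $\frac1q$ there appears inside a bracket that is multiplied by an overall factor of $\frac{1}{T^2}$.
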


\begin{proof} 
By Lemma~\ref{lem:unfolding SL2}, and Lemma~\ref{lem:ConvolutionLemmaSL2},
\begin{align*}
\n{b_{T,e}}_2^2 & = b_{T,e} *\chi_T (e) \\
& = \sum_{\gamma\in\Gamma(q)} (\chi_T *\chi_T)(\gamma) \\
& \ll \sum_{\gamma\in\Gamma(q)} \psi_{T^2}(\gamma) \\
    & =\frac{1}{T^2} 
    \sum_{\gamma\in \Gamma(q): \n{\gamma}_\H\le T^2} \n{\gamma}_\H^{-1}.
\end{align*}

We next apply discrete partial summation (\cite[Theorem~421]{hardy1979introduction}) which says that for $g:\Gamma\left(q\right)\to\left[1,\infty\right]$,
$f:\left[1,\infty\right]\to\R$ nice enough it holds that 
\begin{align}\label{eq:partial summation}
\sum_{\gamma:1\le g\left(\gamma\right)\le Y}f\left(g\left(\gamma\right)\right)=f\left(Y\right)|\left\{ \gamma:1\le g\left(\gamma\right)\le Y\right\}| -\intop_{1}^{Y}|\left\{ \gamma:g\left(\gamma\right)\le S\right\}|\frac{df}{dS}\left(S\right)dS.
\end{align}

Apply this to $g(\gamma) = \n{\gamma}_\H$, $f(x) = x^{-1}$ and $Y=T^2$, 

\begin{align*}
    &\frac{1}{T^2} 
    \sum_{\gamma\in \Gamma(q): \n{\gamma}_\H\le T^2} \n{\gamma}_\H^{-1} \\
    & =    \frac{1}{T^2} 
 \left( \frac{1}{T^2}|\left\{\gamma\in \Gamma(q):\n{\gamma}_\H \le T^2 \right\}|
    + \intop_1^{T^2} |\left\{\gamma\in \Gamma(q):\n{\gamma}_\H \le S \right\}| S^{-2} dS
    \right) \\
    &\ll_\epsilon T^\epsilon\frac{1}{T^2}\left(\frac{1}{T^2}\left(\frac{T^4}{q^3} + \frac{T^2}{q}+1\right)
    +
    \intop_1^{T^2}
    \frac{1}{S^2}
    \left(\frac{S^2}{q^3}+\frac{S}{q} +1\right)  dS \right)
    \\
    &\ll_\epsilon T^\epsilon \frac{1}{T^2} \left(\frac{T^2}{q^3} + \frac{1}{q} +\frac{1}{T^2} + 1\right) \\
    &\ll T^\epsilon \left(\frac{1}{q^3} + \frac{1}{T^2}\right).
\end{align*}

The first inequality follows from Theorem~\ref{thm:counting SL2}. 

\end{proof}

Let $\pi \in L^2(X_q)$ be the constant probability function on $X_q$ (recall that the space has finite volume). Denote by $L^2_0\left(X_q\right)$ the set of functions of integral $0$, or alternatively the set of functions orthogonal to $\pi$. The deepest input to the proof is the following celebrated theorem of Selberg:

\begin{thm}[Selberg's Spectral Gap Theorem]\label{thm:selberg spectral gap}
There is an explicit $\tau>0$ such that for every $f\in L^2_0\left(X_q\right)$ and $T>0$ is holds that 
$\n{f*\chi_{T^\eta}}_2 \ll T^{-\eta\tau}\n{f}_2 $.
\end{thm}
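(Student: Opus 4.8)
The plan is to deduce this from the spectral decomposition of $X_q$ together with Selberg's $3/16$ bound, by diagonalising the convolution operator. First I would observe that $f\mapsto f*\chi_{S}$, with $S=T^{\eta}$, is convolution by the real, bi‑$K$‑invariant function $\chi_{S}$ with $\chi_{S}(g)=\chi_{S}(g^{-1})$, hence a bounded self‑adjoint operator on $L^{2}(X_{q})$ which fixes the constants and therefore preserves $L^{2}_{0}(X_{q})$. By the Selberg spectral decomposition of $L^{2}\k{\Gamma(q)\backslash\SL_{2}(\R)/K}$, the space $L^{2}_{0}(X_{q})$ is the orthogonal sum of the cuspidal part --- a Hilbert sum of Laplace eigenspaces with eigenvalues $\lambda_{j}=s_{j}(1-s_{j})$, $s_{j}\in(1/2,1)\cup(1/2+i\R_{\ge0})$ --- and the continuous part, a direct integral over the unitary Eisenstein spectrum $s=1/2+it$; for $\Gamma(q)\subseteq\SL_{2}(\Z)$ the residual spectrum contributes only the constants, which lie in $L^{2}_{0}(X_{q})^{\perp}$. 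On a spherical vector with spectral parameter $s$ the operator acts by the scalar $\widehat{\chi_{S}}(s)=\intop_{G}\chi_{S}(g)\varphi_{s}(g)\,dg$, where $\varphi_{s}$ is the Harish‑Chandra spherical function, so it suffices to bound $|\widehat{\chi_{S}}(s)|$ uniformly over the spectral parameters occurring in $L^{2}_{0}(X_{q})$, uniformly in $q$.

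Second, I would estimate $\widehat{\chi_{S}}(s)$. Writing $\sigma=\operatorname{Re}(s)\in[1/2,1)$ and parametrising $g$ by $r=2\log\n{g}_{\H}\ge0$, the Cartan volume element is $\asymp e^{r}\,dr$ and the classical bound for the spherical function (equivalently, for Harish‑Chandra's $\Xi$ function) gives $|\varphi_{s}(g)|\ll(1+r)\,e^{(\sigma-1)r}$, uniformly for $\sigma$ in compact subsets of $[1/2,1)$. Since $\chi_{S}$ is the normalised characteristic function of $\{r\le2\log S\}$, whose ball has volume $\asymp S^{2}$,
\[
|\widehat{\chi_{S}}(s)|\ll\frac{1}{S^{2}}\intop_{0}^{2\log S}(1+r)\,e^{(\sigma-1)r}\,e^{r}\,dr\ll_{\epsilon}\frac{1}{S^{2}}\,S^{2\sigma}\log S\ll_{\epsilon}S^{2\sigma-2+\epsilon}.
\]
In particular, on the tempered part ($\sigma=1/2$, covering the entire continuous spectrum and all non‑exceptional cusp forms) this is $\ll_{\epsilon}S^{-1+\epsilon}$.

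Third, I would invoke Selberg's theorem: the exceptional cuspidal eigenvalues of $X_{q}$ satisfy $\lambda_{1}\ge3/16$, uniformly in $q$, i.e.\ every exceptional spectral parameter has $\sigma\le3/4$. Plugging $\sigma\le3/4$ into the bound above gives $|\widehat{\chi_{S}}(s)|\ll_{\epsilon}S^{-1/2+\epsilon}$ for \emph{every} spectral parameter occurring in $L^{2}_{0}(X_{q})$, with the implied constant independent of $q$. Hence $\n{f*\chi_{S}}_{2}\le\big(\sup_{s}|\widehat{\chi_{S}}(s)|\big)\n{f}_{2}\ll_{\epsilon}S^{-1/2+\epsilon}\n{f}_{2}$, and with $S=T^{\eta}$ this is the claim with any explicit $\tau<1/2$ (one may take, say, $\tau=1/4$ with room to spare; even a much weaker, essentially elementary, uniform spectral gap for congruence subgroups would already yield some explicit $\tau>0$).

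The only genuinely deep ingredient is Selberg's $3/16$ estimate, which I would simply cite. The remaining work is routine: the bookkeeping of the continuous spectrum and the residual spectrum forced by the non‑compactness of $X_{q}$, and the (standard) uniformity of the spherical‑function/$\Xi$‑function bound as $\sigma\to1/2$, which is exactly where the harmless logarithmic and $\epsilon$‑losses enter. I expect the continuous‑spectrum bookkeeping to be the main technical point to get right, though it is entirely standard.
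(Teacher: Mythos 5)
Your proof is correct and is exactly the standard argument the paper has in mind: the paper does not prove Theorem~\ref{thm:selberg spectral gap} but cites it, remarking that Selberg's eigenvalue bound ``can be translated to a spectral gap of the convolution operators by large balls'' via the spectral decomposition and spherical-function estimates, which is precisely what you carry out. Your bookkeeping of the cuspidal, residual, and continuous spectrum and the bound $|\widehat{\chi_{S}}(s)|\ll_{\epsilon}S^{2\sigma-2+\epsilon}$ with $\sigma\le 3/4$ are all sound, yielding an explicit $\tau$ independent of $q$ as required.
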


The important part of the theorem is the independence of $\tau$ from $q$.

Selberg's theorem is usually stated as a lower bound on the spectrum of the Laplacian. However, it is well known that it can be translated to a spectral gap of the convolution operators by large balls (see, e.g., \cite[Section~4]{ghosh2013diophantine} for a generalized statement). There are various results improving the value of $\tau$ in Selberg's theorem (see \cite{sarnak2005notes}), but those improvement are inconsequential to our theorem.

From Selberg's theorem we deduce:

\begin{lem}\label{lem:almost equidist L2 SL2}
For $T= q^{3/2}$,
\[
\n{b_{T,e}*\chi_{T^\eta}-\pi}_2 \ll_\epsilon q^{-3/2-\eta\tau+\epsilon}.
\]
\end{lem}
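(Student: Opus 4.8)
The plan is to combine the three ingredients already assembled: the $L^2$-bound on $b_{T,e}$ from Lemma~\ref{lem:lattice count to L2 - SL2}, the spectral gap from Selberg's theorem (Theorem~\ref{thm:selberg spectral gap}), and the fact that $\pi$ is the orthogonal projection of $b_{T,e}$ onto the constants. First I would decompose $b_{T,e} = \pi + b_{T,e}^0$, where $b_{T,e}^0 \in L^2_0(X_q)$ is the projection to mean-zero functions; this is legitimate because $\intop_{X_q} b_{T,e}\,dx = 1$, so $\langle b_{T,e},\pi\rangle = \pi(e)^{-1}$ matches the $\pi$-component exactly. Since convolution by $\chi_{T^\eta}$ is $\SL_2(\R)$-equivariant, it fixes the constant function $\pi$ (as $\chi_{T^\eta}$ is a probability function), so $b_{T,e}*\chi_{T^\eta} - \pi = b_{T,e}^0 * \chi_{T^\eta}$.

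Next I would apply Theorem~\ref{thm:selberg spectral gap} to $f = b_{T,e}^0$: this gives $\n{b_{T,e}^0 * \chi_{T^\eta}}_2 \ll T^{-\eta\tau}\n{b_{T,e}^0}_2 \le T^{-\eta\tau}\n{b_{T,e}}_2$, where the last inequality is just that orthogonal projection does not increase norm. Then I would substitute the bound from Lemma~\ref{lem:lattice count to L2 - SL2}, which at $T = q^{3/2}$ gives $\n{b_{T,e}}_2^2 \ll_\epsilon T^\epsilon q^{-3}$, hence $\n{b_{T,e}}_2 \ll_\epsilon T^{\epsilon/2}q^{-3/2}$. Combining, $\n{b_{T,e}*\chi_{T^\eta} - \pi}_2 \ll_\epsilon T^{-\eta\tau} T^{\epsilon/2} q^{-3/2}$; since $T = q^{3/2}$ means $T^{-\eta\tau} = q^{-3\eta\tau/2}$ and $T^{\epsilon/2} = q^{3\epsilon/4}$, this is $\ll_\epsilon q^{-3/2 - 3\eta\tau/2 + 3\epsilon/4}$. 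Absorbing the constant factor $3/2$ into the $\epsilon$ (relabelling $\epsilon$), this yields $\ll_\epsilon q^{-3/2 - \eta\tau + \epsilon}$ — possibly after a further harmless rescaling of $\tau$ by $3/2$, or simply noting that $q^{-3\eta\tau/2} \le q^{-\eta\tau}$ since $\tau,\eta>0$ and $q>1$, which gives the stated bound directly.

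I do not expect a genuine obstacle here — the lemma is a bookkeeping assembly of prior results. The one point requiring a line of care is that convolution by $\chi_{T^\eta}$ commutes with the orthogonal decomposition $L^2(X_q) = \C\pi \oplus L^2_0(X_q)$: this holds because right convolution by a bi-$K$-invariant $L^1$ function is a bounded self-adjoint operator that preserves both the constants (it is probability-normalized) and hence their orthogonal complement. I would state this explicitly before invoking Selberg. The other minor point is making sure the $\epsilon$'s are tracked consistently across the application of Lemma~\ref{lem:lattice count to L2 - SL2}, which is trivial.
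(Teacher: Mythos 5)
Your argument is correct and is essentially identical to the paper's proof: decompose $b_{T,e}=\pi+b_{T,e}^0$ with $b_{T,e}^0\in L^2_0(X_q)$, use $\pi*\chi_{T^\eta}=\pi$, apply Theorem~\ref{thm:selberg spectral gap} to the mean-zero part, bound its norm by $\n{b_{T,e}}_2$ via orthogonal projection, and insert Lemma~\ref{lem:lattice count to L2 - SL2}; your closing observation that $q^{-3\eta\tau/2}\le q^{-\eta\tau}$ is exactly the (implicit) step the paper uses to land on the stated exponent. The only blemish is the parenthetical claim that $\langle b_{T,e},\pi\rangle=\pi(e)^{-1}$ --- it should be $\pi(e)=1/\Vol(X_q)$, which equals $\langle\pi,\pi\rangle$ so the constant component is indeed $\pi$; this does not affect the argument.
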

\begin{proof}
We have $b_{T,e}-\pi\ \in L^2_0(X_q)$ and $\pi *\chi_{T} = \pi$ (as an average of the constant function is the constant function).

Therefore,
\begin{align*}
\n{b_{T,e}*\chi_{T^\eta}-\pi}_2
& =
\n{(b_{T,e}-\pi)*\chi_{T^\eta}}_2 \\
& \ll T^{-\eta \tau}\n{b_{T,e}-\pi}_2 \\
& \ll_\epsilon q^{-3/2-\eta\tau+\epsilon}, 
\end{align*}
where in the first inequality we applied Theorem~\ref{thm:selberg spectral gap}, and in the second inequality we applied $\n{b_{T,e}-\pi}_2 \le \n{b_{T,e}}_2$ ($b_{T,e}-\pi$ is the orthogonal projection of $b_{T,e}$ onto $L^2_0(X_q)$) and Lemma~\ref{lem:lattice count to L2 - SL2}.
\end{proof}

The last lemma implies that the function $b_{T,e}*\chi_{T^\eta}$ is very close to the constant probability function $\pi$. Let us show how this implies Theorem~\ref{thm:Sarnak}.

We have a map $\iota\colon G_q\cong \Gamma(q) \backslash \Gamma \to X_q \cong \Gamma(q) \backslash G /K$, defined as $\iota(\Gamma(q) \gamma) = \Gamma(q) \gamma K$. For $y\in G_q$, we may consider the function $b_{T_0,\iota(y)}$. We choose $T_0$ small enough (independently of $q$), so that the functions $b_{T_0,\iota(y)}$ will have disjoint supports for $\iota(y) \ne \iota(y^\prime)$. Specifically, it is enough to choose $T_0$ such that the ball of radius $2\log(T_0)$ around $i$ and around $\gamma i \ne i$ for $\gamma \in \SL_2(\Z)$ are disjoint.  We also notice that $\iota$ has fibers of bounded size, specifically $|\SL_2(\Z) \cap K| =4$. In addition, there is a uniform (in $q$) upper bound on the norm $\n{b_{T_0,\iota(y)}}_2$.

\begin{lem}\label{lem:lifting from spectral SL2} Assume that $\left\langle b_{T,e}*\chi_{T^\eta},b_{T_0,\iota(y)}\right\rangle>0$, then there is $\gamma \in \Gamma$ such that $\pi_q(\gamma)=y$, and $\n{\gamma}_\H \le T_0T^{1+\eta}$.
\end{lem}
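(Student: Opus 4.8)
The plan is to unwind the assumption $\langle b_{T,e}*\chi_{T^\eta}, b_{T_0,\iota(y)}\rangle > 0$ using the integral/sum expressions already developed. First, by Lemma~\ref{lem:unfolding SL2} applied to $f = b_{T_0,\iota(y)}$ (which is bounded since $T_0$ is fixed), we have $\langle b_{T_0,\iota(y)}, b_{T,e}\rangle = b_{T_0,\iota(y)} * \chi_T(e)$, and more generally, using associativity of convolution and symmetry of the kernels, I would rewrite $\langle b_{T,e}*\chi_{T^\eta}, b_{T_0,\iota(y)}\rangle$ as $\langle b_{T,e}, b_{T_0,\iota(y)}*\chi_{T^\eta}\rangle$, and then unfold one of the two point-sums. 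Concretely, since $b_{T,e}(x) = \sum_{\gamma\in\Gamma(q)}\chi_T(\gamma x)$ and $b_{T_0,\iota(y)}*\chi_{T^\eta}$ is a $\Gamma(q)$-invariant function, unfolding the $\Gamma(q)$-sum against the quotient integral turns the inner product into an integral over $\H$ (or $G/K$) of $\chi_T(x) \cdot (b_{T_0,\iota(y)}*\chi_{T^\eta})(x)$. Expanding $b_{T_0,\iota(y)}$ back as a sum over $\Gamma(q)$ and $\chi_{T^\eta}$ as an average over a ball, positivity of the whole expression forces the existence of some $\delta\in\Gamma(q)$, some $g$ with $\n{g}_\H \le T^\eta$, and some $h$ with $\n{h}_\H \le T$, together with a lift $\tilde{x}_0$ of $\iota(y)$, such that the relevant product of indicator functions is nonzero — i.e., there is a point of $\H$ lying in the intersection of the appropriate translated balls.

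The upshot is that there exist $\delta \in \Gamma(q)$ and group elements of controlled size realizing an equality of the form $\tilde{x}_0\, g\, = \, h\, \delta$ (up to elements of $K$, which do not affect $\n{\cdot}_\H$), where $\tilde{x}_0$ is a lift to $\SL_2(\Z)$ of a representative of $y \in G_q = \Gamma(q)\backslash\Gamma$. Rearranging, $\tilde{x}_0 = h\,\delta\, g^{-1}$ up to $K$ on the right; but we want an element of $\Gamma$, not of $G$. The point is that $h\delta$ is not yet in $\Gamma$ either. Instead I would set $\gamma := \tilde{x}_0$ itself if $\tilde x_0$ already has small norm — but it need not — so the correct move is: the nonvanishing of the overlap means some $\gamma' \in \Gamma(q)\tilde{x}_0$ (a $\Gamma(q)$-translate of the lift, hence still projecting to $y$ under $\pi_q$) satisfies $\gamma' \in K\cdot\{g_1 : \n{g_1}_\H\le T^\eta\}\cdot\{g_2:\n{g_2}_\H\le T\}\cdot K$. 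Since $T_0$ was chosen so that the balls $b_{T_0,\iota(y)}$ have disjoint supports and since $\iota$ has fibers of size at most $4$ (intersection with $K$), the element $\gamma'$ is a genuine element of $\Gamma$ with $\pi_q(\gamma') = y$; and by sub-multiplicativity of $\n{\cdot}_\H$ together with $\n{k}_\H = 1$ for $k\in K$ and the $T_0$-sized ball entering through $b_{T_0,\iota(y)}$, we get $\n{\gamma'}_\H \le T_0 \cdot T^\eta \cdot T = T_0 T^{1+\eta}$. Then set $\gamma = \gamma'$.

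I expect the main obstacle to be \emph{bookkeeping the groupoid of translations carefully} — keeping track of which lift $\tilde{x}_0$ is used, ensuring that the $\Gamma(q)$-coset that emerges from unfolding is exactly $\Gamma(q)\tilde{x}_0$ (so that the projection to $G_q$ is still $y$ and not some other element), and confirming that the factor $T_0$ (rather than $T_0^2$ or a constant times it) is the honest contribution of the support radius of $b_{T_0,\iota(y)}$ after passing through the convolution. The disjointness of supports and the bounded fibers of $\iota$ are precisely the facts that make this clean: they guarantee that "the overlap is supported near a single lattice point" so no spurious larger-norm elements are introduced. Everything else — associativity of convolution, symmetry $\chi_T(g) = \chi_T(g^{-1})$, sub-multiplicativity and $K$-invariance of $\n{\cdot}_\H$ — is routine and already set up in the preceding pages.
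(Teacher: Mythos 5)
Your proposal is correct and takes essentially the same route as the paper: unfold the inner product into $(b_{T,e}*\chi_{T^\eta}*\chi_{T_0})(\iota(y))>0$, extract group elements of controlled $\n{\cdot}_\H$-norm whose product lands in the coset $\Gamma(q)\gamma_y\subset\Gamma$, and conclude by sub-multiplicativity and bi-$K$-invariance of $\n{\cdot}_\H$. The only slip is cosmetic: the resulting element lies in $\Gamma$ and projects to $y$ simply because $\Gamma(q)\tilde{x}_0\subset\Gamma$, not because of the disjointness of the supports of the $b_{T_0,\iota(y)}$ or the bounded fibers of $\iota$ --- those facts are only needed later, in the deduction of Theorem~\ref{thm:Sarnak}, to bound $\n{B}_2^2\ll |Z|$.
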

\begin{proof}
By Lemma~\ref{lem:unfolding SL2}, the condition implies that 
\[
(b_{T,e}*\chi_{T^\eta}*\chi_{T_0}) (\iota(y)) >0.
\]

Treat the function as a left $\Gamma(q)$-invariant and right $K$-invariant function on $G$. Let $\gamma_y$ to be a lift of $y$ to $\Gamma$, i.e. $\pi_q(\gamma_y) = y$. Therefore, $b_{T,e}*\chi_{T^\eta}*\chi_{T_0} (\gamma_y) >0$.

By the definition of convolution, there are $g_1^\prime,g_2,g_3\in G$, such that $g_1^\prime \in \supp(b_{T,e})$, $g_2 \in \supp(\chi_{T^\eta})$, $g_3 \in \supp(\chi_{T_0})$, and such that $ g_1^\prime g_2 g_3 = \gamma_y$. Looking at the definition of $b_{T,e}$ and $g_1^\prime$, there are $g_1\in \supp(\chi_T)$, $\gamma\in \Gamma(q)$ such that $e^{-1} \gamma g_1^\prime = g_1$ (we write $e$ for the identity element instead of discarding it, anticipating the case of $\SL_3$ below). Therefore $\gamma^{-1} e g_1 g_2 g_3 = \gamma_y$ .

Write $g = g_1 g_2 g_3$. By the above, $\n{g}_\H \le \n{g_1}_\H \n{g_2}_\H \n{g_3}_\H \le T_0T^{1+\eta}$. In addition, $ e g = \gamma \gamma_y$, so that $g \in \Gamma(q) \gamma_y$. Therefore $g\in \Gamma$ and $\pi_q(g) = y$, as needed.

\end{proof}

We may now finish the proof of Theorem~\ref{thm:Sarnak}. Let $\eta>0$ and write $T=q^{3/2}$. Assume that $Z \subset G_q$ is the set of $y\in G_q$ such such that there is no $\gamma_y \in \Gamma$ with $\n{\gamma_y} \le T_0 T^{1+\eta}$ and $\pi_q(\gamma_y)=y$. It suffices to prove that for a fixed $\eta>0$ it holds that $|Z|= o(q^3)$. 

By Lemma~\ref{lem:lifting from spectral SL2}, for every $y\in Z$, 
\[
\left \langle b_{T,e}*\chi_{T^\eta},b_{T_0,\iota(y)} 
\right \rangle
=0.
\]

Let $B = \sum_{y\in Z} b_{T_0,\iota(y)}$. Then by the above and the fact that $\left \langle \pi,b_{T_0,\iota(y)} \right \rangle = \frac{1}{\text{Vol}(\Gamma(q) \backslash \H)}\gg \frac{1}{q^3}$, 
\[
\left|\left \langle b_{T,e}*\chi_{T^\eta}-\pi,B
\right \rangle \right|
\gg \frac{|Z|}{q^3}.
\]

On the other hand, by the choice of $T_0$ and the remarks following it, $\n{B}_2^2 \ll |Z|$. Therefore, using Lemma~\ref{lem:almost equidist L2 SL2} and Cauchy-Schwartz,
\begin{align*}
    & \left|\left \langle b_{T,e}*\chi_{T^\eta}-\pi,B
\right \rangle\right| \ll \n{B}_2 \n{b_{T,e}*\chi_{T^\eta}-\pi}_2 \\
& \ll_\epsilon  \sqrt{|Z|} q^{-3/2-\eta\tau+\epsilon}.
\end{align*}

Combining the two estimates and taking $\epsilon$ small enough gives
\[
|Z| \ll_\epsilon q^{3-2\eta\tau+2\epsilon}=o(q^3),
\]
as needed.

\section{\label{sec:First Section}Proof of Theorem~\ref{thm:lattice point}}

Our goal is to prove that there exists a constant $C>0$ such that for every prime $q$, $\epsilon>0$ and $T\le Cq^{2}$,
we have
\[
|\left\{ \left(\gamma,x\right)\in \SL_{3}\left(\Z\right)\times P^{2}\left(\F\right):\n{\gamma}_{\infty}\|\gamma^{-1}\|_{\infty}\le T,\Phi_{q}\left(\gamma\right)x=x\right\}| \ll_{\epsilon}Tq^{2+\epsilon}.
\]

If $\gamma\mod q$ has no eigenspace of dimension $2$, then it has
at most 3 eigenvectors in $P^{2}\left(\F\right)$. Call such a $\gamma$
\emph{good} mod $q$ and otherwise call it \emph{bad} mod $q$. Therefore for $T\le q^{2}$,
\begin{align*}
&|\left\{ \left(\gamma,x\right)\in \SL_{3}\left(\Z\right)\times P^{2}\left(\F\right):\n{\gamma}_{\infty}\|\gamma^{-1}\|_{\infty}\le T,\Phi_{q}\left(\gamma\right)x=x,\gamma\text{ good mod }q\right\}| \\
&\ll T^{2+\epsilon}\ll Tq^{2+\epsilon}.
\end{align*}

We therefore need to bound the number of bad $\gamma$-s. Notice that bad elements do exist and may have a lot of fixed points: e.g., the element $I\in \SL_{3}\left(\Z\right)$
is bad mod $q$ and $\Phi_{q}\left(I\right)$ fixes all of $P^{2}\left(\F\right)$. 

Assuming that we choose $C<1/4$, it will hold that either
$\n{\gamma}_{\infty}<q/2$ or $\n{\gamma^{-1}}_{\infty}<q/2$. Therefore
if $\gamma\ne I$ then $\gamma\mod q\ne I_{\SL_{3}\left(\F\right)}$,
and thus $\Phi_{q}(\gamma)$ fixes at most $q+1$ elements in $P^{2}\left(\F\right)$. 
It thus suffices to prove that for some $C>0$, and $T\le Cq^{2}$,
\[
|\left\{ \gamma\in \SL_{3}\left(\Z\right):\n{\gamma}_{\infty}\|\gamma^{-1}\|_{\infty}\le T\text{, }\gamma\text{ bad mod }q\right\}| \ll_{\epsilon}Tq^{1+\epsilon}.
\]

Assume that $\gamma$ is bad mod $q$ and $\n{\gamma}_{\infty}\|\gamma^{-1}\|_{\infty}\le T$.
Without loss of generality assume that $\n{\gamma}_{\infty}\le \n{\gamma^{-1}}_{\infty}\le T^{1/2}<q/2$.
We identify elements of $\F$ with integers of absolute value at most $q/2$. Thus, once we know the value of an entry of $\gamma \mod q$ we know the same entry in $\gamma$.

We divide the range of $\n{\gamma}_{\infty}$ into $O\left(\log\left(T\right)\right)$
dyadic subintervals. Denote by $S$ the bound on $\n{\gamma}_\infty$ and by $R$ the bound on $\n{\gamma^{-1}}_\infty$. Then it is enough to prove that there exists $C>0$ such that for
every $RS\le Cq^{2}$ and $S\le R$ it holds that
\[
|\left\{ \gamma\in \SL_{3}\left(\Z\right):\n{\gamma}_{\infty}\le S,\n{\gamma^{-1}}_\infty\le R\text{, }\gamma\text{ bad mod }q\right\}| \ll_{\epsilon}RSq^{1+\epsilon}.
\]
%Notice that we may also assume that $R\le 2 S^2$, since $\n{\gamma^{-1}}_\infty \le 2\n{\gamma}_\infty^2$ for every $\gamma \in \SL_3(\R)$, but we will not use it.

It will be useful to understand the behavior of bad $\gamma$. Let $\alpha\in\mathbb{F}_{q}\backslash\{0\}$ be the eigenvalue of
$\gamma\mod q$ with an eigenspace of dimension $2$. Then the third
eigenvalue is $\alpha^{-2}\mod q$.

From this it follows that 
$(\gamma - \alpha I)(\gamma - \alpha^{-2}I) = 0\mod q$,
or, 
\begin{equation}\label{eq: relation mod q}
\gamma +\alpha ^{-1}\gamma^{-1} = \alpha + \alpha^{-2} \mod q.
\end{equation}

By considering the trace of $\gamma$ and $\gamma^{-1}$ we have that 
\begin{equation}\label{eq: trace relation}
\tr \gamma = \alpha + 2\alpha^{-2} \mod q, \quad
\tr \gamma^{-1} = \alpha^{-1} + 2\alpha^2 \mod q.
\end{equation}

Finally, identify $\alpha$ with some lift of it in $\Z$. Then $\gamma -\alpha I \mod q$ is of rank $1$, which means that $\det(\gamma-\alpha I) = 0 \mod q^2$. Since $\det\gamma =1$, it holds that $\det(\gamma - xI) = 1 - \tr \gamma^{-1} x +\tr \gamma x^2 - x^3$, and we get 
\begin{equation}\label{eq: trace relation mod q2}
\alpha^2 \tr \gamma - \alpha \tr \gamma^{-1}= \alpha^3 -1 \mod q^2. 
\end{equation}

Denote the entries of $\gamma$ by $a_{ij}$, $1\le i,j\le3$ and the entries of $\gamma^{-1}$ by $b_{ij}$, $1\le i,j\le3$.

There are $\le(2S+1)^{3}$ options for choosing the diagonal $a_{11},a_{22},a_{33}$ of $\gamma$, and once we know them, we know $\tr \gamma$. 
By Equation~\eqref{eq: trace relation} $\alpha$ is a root of a known third degree polynomial, so there are at most $3$ options for $\alpha$. By Equation~\eqref{eq: trace relation mod q2} we know $\tr
\gamma^{-1} \mod q^2$. Since $R \le RS \le C q^2<q^2/4$,  we may assume that $|\tr \gamma^{-1}| < q^2/2$, so now we know $\tr \gamma^{-1}.$

By Equation~\eqref{eq: relation mod q} we now know the diagonal $b_{11},b_{22},b_{33} \mod q$ of $\gamma^{-1} \mod q$. Since the entries $b_{11},b_{22},b_{33}$ are bounded in absolute value by $R$, we have at most $2R/q+1$ options for each of them. We may guess $b_{11},b_{22}$ and get $b_{33}$ since we know $\tr \gamma^{-1}$. 

In total, we had $\ll S^{3}(R/q+1)^{2}$ options so far. 
We call the case where $a_{ii}a_{jj}= b_{kk}$ for some $\{i,j,k\}=\{1,2,3\}$ exceptional. We will deal with it later and assume for now that we are in the non-exceptional case. 

Notice that $a_{11}a_{22}-a_{12}a_{21} = b_{33}$, or 
\[
a_{12}a_{21} = a_{11}a_{22}-b_{33}.
\]
Since we are in the non-exceptional case, the right hand side is not $0$. By the divisor bound there are at most $\ll_\epsilon q^\epsilon$ options for $a_{12},a_{21}$. Similarly, all the other entries $a_{13},a_{31},a_{23},a_{32}$ have at most $\ll_\epsilon q^\epsilon$ options. 

In total, we counted $\ll_{\epsilon}q^{\epsilon}S^{3}(R/q+1)^{2}$
bad $\gamma$-s in the non-exceptional case. We postpone the exceptional
case to the end of the proof. The same (and better) bounds hold for
it as well.

It remains to show that
\[
S^{3}(R/q+1)^{2} \ll RSq,
\]
assuming $S\le R$, $RS \le C q^2$.

If $R\le q$, then we need to show that $S^3 \ll RSq$, or $S^2 \ll Rq$, which is obvious since $S\le R \le q$. 

If $R>q$ then we need to show that 
$S^3R^2/q^2\ll RS q$, or $S^2R \ll q^3$. Since $RS \le Cq^2$, this reduces to showing that $S\ll q$, which is obvious since $S^2 \le RS \le Cq^2$. 

\subsection*{Exceptional cases}

Recall that the exceptional case is when $a_{ii}a_{jj}=b_{kk}$ for some $\{i,j,k\} = \{1,2,3\}$. 
Assume without loss of generality that $a_{11}a_{22}=b_{33}$.
Therefore $a_{12}a_{21}=a_{11}a_{22}-b_{33}=0$.

We know that $\gamma-\alpha I\mod q$ is of rank $1$, so each determinant of a  $2\times2$
submatrix of $\gamma$ equals $0\mod q$. Therefore
\[(a_{11}-\alpha)(a_{22}-\alpha)-a_{12}a_{21}=0\mod q,\] 
so 
\[
(a_{11}-\alpha)(a_{22}-\alpha) =0 \mod q
\]

Without loss of generality again, we may assume that $a_{11} = \alpha \mod q$. 
By our assumptions
on the size of the matrix, we may lift $\alpha$ to some fixed element in $\Z$ of absolute value $\le q/2$ and let $a_{11}=\alpha$. By the above, 
$a_{12}a_{21}=0$, and by symmetry again, we may assume that $a_{21}=0$.
Some more minors give: 
\begin{align}
a_{31} (a_{22}-\alpha) & =a_{21} a_{32}=0\mod q\label{eq:excep21}\\
a_{31} a_{23} & =a_{21} (a_{33}-\alpha)=0\mod q.\label{eq:excep22}
\end{align}
We now divide into two cases according to whether $a_{31}=0$ or not:
\begin{enumerate}
\item \textbf{Case 1:} $a_{11}=\alpha$, $a_{21}=0$, $a_{31}=0$. In this
case, the matrix is of the form:
\[
\gamma=\left(\begin{array}{ccc}
\alpha & a_{12} & a_{13}\\
0 & a_{22} & a_{23}\\
0 & a_{32} & a_{33}
\end{array}\right).
\]
Denote $A=\left(\begin{array}{cc}
a_{22} & a_{23}\\
a_{32} & a_{33}
\end{array}\right)$. It holds that $\alpha\det A=1$. Therefore $\alpha=\pm1$ and $\det A=\pm1$.
We also know that the eigenvalues of $A\mod q$ are either $\pm1$
(if $\alpha=-1$) or $1$ with multiplicity 2 (if $\alpha = 1$). Therefore the trace
of $A$ is either $0$ or $2$. We now separate into two further cases. In
the first case $a_{22}\ne\alpha$ and $a_{33}\ne\alpha$, or equivalently $a_{22}a_{33}\ne \det A$. In the
second case we may assume without loss of generality that $a_{22}=\alpha$.
\begin{enumerate}
\item \textbf{Subcase 1a:} $a_{11}=\alpha$, $a_{21}=0$, $a_{31}=0,a_{22}\ne\alpha,a_{33}\ne\alpha$.
The entry $a_{22}$ has $2S+1$ options, and it determines the value of $a_{33}$
since we know the trace of $A$. In this subcase it holds that $a_{23}a_{32}=\det A -a_{22}a_{33}\ne0$.
By the divisor bound there are $\ll_{\epsilon}S^{\epsilon}$ options
for $a_{23},a_{32}$ and both are non-zero. We also know that the
third column of $\gamma -\alpha I \mod q$ is a multiple of the second column, and
now we know the ratio. This means that after we choose $a_{12}$
in $2S+1$ ways it sets $a_{13}$ uniquely. Therefore there are $\ll_{\epsilon}S^{2+\epsilon}\le RSq^\epsilon$ options in this case.
\item \textbf{Subcase 1b:} $a_{11}=\alpha,a_{21}=0,a_{31}=0,a_{22}=\alpha,a_{33}=1$.
In this case $a_{23}a_{32}=\det A -a_{22}a_{33}=0$. If $a_{23}\ne0$
then $a_{32}=a_{12}=0$ and there are $\le(2S+1)^2$ options for $a_{23},a_{13}$.
Similarly, if $a_{32}\ne0$ then $a_{23}=0$ and once we know $a_{12}$
we also know $a_{13}$. Therefore there are $\ll S^{2}\le RS$ option in this
case.
\end{enumerate}
\item \textbf{Case 2:} $a_{11}=\alpha$, $a_{21}=0$, $a_{31}\ne0$. By \eqref{eq:excep21}, \eqref{eq:excep22}
we have $a_{22}=\alpha,a_{23}=0$, and hence: 
\[
\gamma-\alpha I=\left(\begin{array}{ccc}
0 & a_{12} & a_{13}\\
0 & 0 & 0\\
a_{31} & a_{32} & a_{33}-\alpha
\end{array}\right)
\]
Since its rank mod $q$ is $1$ and $a_{31}\neq0$ the second and third
columns are scalar multiples of the first, thus $a_{12}=a_{13}=0$.
Therefore $\gamma$ is of the form 
\[
\gamma=\left(\begin{array}{ccc}
\alpha & 0 & 0\\
0 & \alpha & 0\\
a_{31} & a_{32} & a_{33}
\end{array}\right).
\]
Since $\det\gamma=1$ it holds that $\alpha=\pm1,a_{33}=1$ and there are $\ll S^{2}\le RS$ options for $\gamma$.
\end{enumerate}

\section{\label{sec:Second Section}Proof of Theorem~\ref{thm:almost radius}}

As in the proof of Theorem~\ref{thm:Sarnak}, the proof of Theorem~\ref{thm:almost radius} is analytic, and employs the combinatorial Theorem~\ref{thm:lattice point} as an input. Since we wish to use the usual notations of
dividing $\SL_{3}\left(\R\right)$ by $\SL_{3}\left(\Z\right)$ from
the left, we apply a transpose to the question as stated in Theorem~\ref{thm:almost radius}.

Let 
\[
\Gamma_{0}\left(q\right)=\left\{ \left(\begin{array}{ccc}
* & * & *\\
* & * & *\\
a & b & *
\end{array}\right)\in \SL_{3}\left(\Z\right):a=b=0\mod q\right\}.
\]
We have a right action of $\Gamma = \SL_{3}\left(\Z\right)$ on $\Gamma_{0}(q)$.
We let $P_{q}^{tr}=\Gamma_{0}\left(q\right)\backslash \Gamma $ (it is obviously isomorphic to $P_q$ as a set with a $\Gamma$ action).
Then Theorem~\ref{thm:almost radius} can be stated in the following
equivalent formulation:
\begin{thm}
\label{thm:main thm - alt version}As $q\to\infty$ among primes, for every $\epsilon>0$
there exists a set $Y\subset\Gamma_{0}\left(q\right)\backslash \Gamma =P_{q}^{tr}$
of size $\left|Y\right|\ge\left(1-o_{\epsilon}(1)\right)\left|P_{q}^{tr}\right|$,
such that for every $x_{0}\in Y$, there
exists a set $Z_{x_{0}}\subset P_{q}^{tr}$ of size $\left|Z_{x_{0}}\right|\ge\left(1-o_{\epsilon}(1)\right)\left|P_{q}^{tr}\right|$,
such that for every $y\in Z_{x_{0}}$,
there exists an element $\gamma\in \Gamma$ satisfying
$\n{\gamma}_{\infty}\le q^{1/3+\epsilon}$, such that $x_0\gamma=y$.
\end{thm}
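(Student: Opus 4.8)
The plan is to transplant the argument of Section~\ref{sec:SL2} to $G=\SL_{3}(\R)$, $K=\SO(3)$, with $\Gamma_{0}(q)$ in the role of the principal congruence subgroup and the finite-volume locally symmetric space $Y_{q}:=\Gamma_{0}(q)\backslash G/K$ (of volume $\asymp q^{2}$) in the role of $X_{q}$. The combinatorial input is Theorem~\ref{thm:lattice point} in place of Theorem~\ref{thm:counting SL2}; the uniform spectral gap comes from property~(T) of $\SL_{3}(\R)$ in place of Selberg's Theorem~\ref{thm:selberg spectral gap}; and the convolution estimate is the spectral, $\Xi$-function based analogue of Lemma~\ref{lem:ConvolutionLemmaSL2}, which costs a logarithmic factor and forces one to carry along both a bi-$K$-invariant function $\phi$ and its "inverse" $\check\phi(g):=\phi(g^{-1})$. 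I would use the sub-multiplicative (but non-symmetric) bi-$K$-invariant gauge $\n{g}_{\mathrm{op}}$, comparable to $\n{g}_{\infty}$, and the symmetric gauge $\n{g}_{*}=\n{g}_{\mathrm{op}}\n{g^{-1}}_{\mathrm{op}}=e^{t_{1}-t_{3}}$ in the Cartan decomposition $g=k_{1}\,\mathrm{diag}(e^{t_{1}},e^{t_{2}},e^{t_{3}})\,k_{2}$. Fix a small $c>0$ and set $T=cq^{1/3}$, so that the ball $B_{T}=\{\n{g}_{\mathrm{op}}\le T\}$ has volume $\asymp q^{2}\asymp\Vol(Y_{q})$. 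Let $\chi_{T}$ be the probability indicator of $B_{T}$; for small $\eta>0$ and a fixed $T_{0}$ let $\chi_{T^{\eta}}$ and $\check\chi_{T_{0}}$ be the corresponding probability indicators; and for $x_{0}\in Y_{q}$ put $b_{T,x_{0}}(x)=\sum_{\gamma\in\Gamma_{0}(q)}\chi_{T}(\widetilde x_{0}^{-1}\gamma x)$ and, with $\check\chi_{T_{0}}$ in place of $\chi_{T_{0}}$, $\widetilde b_{T_{0},x_{0}}(x)=\sum_{\gamma\in\Gamma_{0}(q)}\check\chi_{T_{0}}(\widetilde x_{0}^{-1}\gamma x)$; both lie in $L^{2}(Y_{q})$ with integral $1$.

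Exactly as in Lemma~\ref{lem:unfolding SL2}, unfolding gives $\langle f,b_{T,x_{0}}\rangle=(f*\check\chi_{T})(x_{0})$ for bounded $f\in L^{2}(Y_{q})$ --- the appearance of $\check\chi_{T}$ rather than $\chi_{T}$ is the promised new wrinkle --- and iterating, $\n{b_{T,x_{0}}}_{2}^{2}=\sum_{\delta\in\widetilde x_{0}^{-1}\Gamma_{0}(q)\widetilde x_{0}}(\chi_{T}*\check\chi_{T})(\delta)$. The group $\widetilde x_{0}^{-1}\Gamma_{0}(q)\widetilde x_{0}$ is the $\Gamma$-stabilizer of a point $x_{0}^{\vee}\in P_{q}$, and $x_{0}\mapsto x_{0}^{\vee}$ is a bijection $P_{q}^{tr}\to P_{q}$. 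A direct Cartan computation shows $\chi_{T}*\check\chi_{T}$ vanishes unless $\n{g}_{*}\le T^{6}$; and since $\widehat{\chi_{T}*\check\chi_{T}}=|\widehat{\chi_{T}}|^{2}\ge 0$ is supported in the tempered spectrum, the spherical Plancherel formula together with $|\varphi_{\lambda}(g)|\le\Xi(g)$ gives $\chi_{T}*\check\chi_{T}(g)\ll \Vol(B_{T})^{-1}\,\Xi(g)\,\mathbf{1}_{\n{g}_{*}\le T^{6}}$. Comparing $\Xi(g)\asymp\log(2+\n{g}_{*})/\n{g}_{*}$ with the probability function $\psi_{T^{6}}(g)\asymp (T^{6}\log T)^{-1}\n{g}_{*}^{-1}\mathbf{1}_{\n{g}_{*}\le T^{6}}$ yields the convolution estimate $\chi_{T}*\check\chi_{T}\ll(\log q)^{O(1)}\psi_{T^{6}}$, the analogue of Lemma~\ref{lem:ConvolutionLemmaSL2} (this is Lemma~\ref{lem:convolution lemma}).

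Summing over $x_{0}\in P_{q}^{tr}$ and using the bijection $x_{0}\leftrightarrow x_{0}^{\vee}$,
\begin{align*}
\sum_{x_{0}\in P_{q}^{tr}}\n{b_{T,x_{0}}}_{2}^{2}
& \ll (\log q)^{O(1)}\sum_{\substack{(\delta,x)\in\Gamma\times P_{q}\\ \Phi_{q}(\delta)x=x}}\psi_{T^{6}}(\delta)\\
& \ll\frac{(\log q)^{O(1)}}{T^{6}}\sum_{\substack{(\delta,x):\,\Phi_{q}(\delta)x=x\\ \n{\delta}_{*}\le T^{6}}}\frac{1}{\n{\delta}_{*}}.
\end{align*}
Since $T^{6}=c^{6}q^{2}\le Cq^{2}$, partial summation as in \eqref{eq:partial summation} together with Theorem~\ref{thm:lattice point} (which bounds the number of pairs with $\n{\delta}_{*}\le S$ by $\ll_{\epsilon}q^{2+\epsilon}S$) gives $\sum_{x_{0}\in P_{q}^{tr}}\n{b_{T,x_{0}}}_{2}^{2}\ll_{\epsilon}q^{\epsilon}$. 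By Markov's inequality the set $Y:=\{x_{0}\in P_{q}^{tr}:\n{b_{T,x_{0}}}_{2}^{2}\le q^{-2+2\epsilon}\}$ satisfies $|Y|\ge|P_{q}^{tr}|-O(q^{2-\epsilon})=(1-o_{\epsilon}(1))|P_{q}^{tr}|$; this is the set $Y$ of the theorem. I expect this step --- passing from the \emph{summed} bound of Theorem~\ref{thm:lattice point} to a pointwise $L^{2}$-bound valid for almost every base point --- to be the crux: it is forced on us by the failure of the premise at $\boldsymbol{1}$ (whose stabilizer is so large that $\n{b_{T,\boldsymbol{1}}}_{2}^{2}$ is not small), and it is exactly why the lattice count in Theorem~\ref{thm:lattice point} is phrased as a count over pairs $(\gamma,x)$. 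Everything before and after this step parallels Section~\ref{sec:SL2} closely.

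For the conclusion: by property~(T) of $\SL_{3}(\R)$ the representation of $G$ on $L^{2}_{0}(Y_{q})$ (the orthocomplement of the constant probability function $\pi$) is bounded away from the trivial representation uniformly in $q$, so the standard $\Xi$-bounds for matrix coefficients give $\n{f*\chi_{T^{\eta}}}_{2}\ll T^{-\eta\tau}\n{f}_{2}$ for $f\in L^{2}_{0}(Y_{q})$, with $\tau>0$ independent of $q$ --- the analogue of Theorem~\ref{thm:selberg spectral gap}. Hence for $x_{0}\in Y$, using $\pi*\chi_{T^{\eta}}=\pi$ and $b_{T,x_{0}}-\pi\perp\pi$, one gets $\n{b_{T,x_{0}}*\chi_{T^{\eta}}-\pi}_{2}\ll_{\epsilon}T^{-\eta\tau}q^{-1+\epsilon}$, the analogue of Lemma~\ref{lem:almost equidist L2 SL2}. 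As in Lemma~\ref{lem:lifting from spectral SL2}, if $\langle b_{T,x_{0}}*\chi_{T^{\eta}},\widetilde b_{T_{0},\iota(y)}\rangle>0$ then, unfolding, there is $\gamma_{1}\in\Gamma_{0}(q)$ with $\widetilde x_{0}^{-1}\gamma_{1}\widetilde y=g_{1}g_{2}g_{3}$, $\n{g_{1}}_{\mathrm{op}}\le T$, $\n{g_{2}}_{\mathrm{op}}\le T^{\eta}$, $\n{g_{3}}_{\mathrm{op}}\le T_{0}$ --- using $\widetilde b$ built from $\check\chi_{T_{0}}$ is precisely what makes unfolding produce $\chi_{T_{0}}$, so the last factor is controlled in $\n{\cdot}_{\mathrm{op}}$ and not in $\n{\cdot^{-1}}_{\mathrm{op}}$ --- whence $\gamma:=\widetilde x_{0}^{-1}\gamma_{1}\widetilde y\in\Gamma$ satisfies $x_{0}\gamma=y$ and $\n{\gamma}_{\infty}\le\n{\gamma}_{\mathrm{op}}\le T_{0}T^{1+\eta}\le q^{1/3+\epsilon}$ for $q$ large and $\eta$ small. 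Finally let $Z_{x_{0}}$ be the set of $y$ for which this inner product is positive; since every non-identity element of $\SL_{3}(\Z)$ has $\n{\cdot}_{\mathrm{op}}$ bounded below, the injectivity radius of $Y_{q}$ at every point of $\iota(P_{q}^{tr})$ is bounded below, so for fixed small $T_{0}$ the functions $\widetilde b_{T_{0},\iota(y)}$ have essentially disjoint supports and $\n{\widetilde b_{T_{0},\iota(y)}}_{2}\ll_{T_{0}}1$ uniformly; then with $B=\sum_{y\notin Z_{x_{0}}}\widetilde b_{T_{0},\iota(y)}$ and $\langle\pi,\widetilde b_{T_{0},\iota(y)}\rangle=\Vol(Y_{q})^{-1}\gg q^{-2}$, Cauchy--Schwarz gives
\begin{align*}
\frac{|P_{q}^{tr}\setminus Z_{x_{0}}|}{q^{2}}
& \ll|\langle b_{T,x_{0}}*\chi_{T^{\eta}}-\pi,B\rangle|\\
& \ll\n{B}_{2}\,\n{b_{T,x_{0}}*\chi_{T^{\eta}}-\pi}_{2}\\
& \ll_{\epsilon}\sqrt{|P_{q}^{tr}\setminus Z_{x_{0}}|}\;T^{-\eta\tau}q^{-1+\epsilon},
\end{align*}
so $|P_{q}^{tr}\setminus Z_{x_{0}}|\ll_{\epsilon}T^{-2\eta\tau}q^{2+2\epsilon}=q^{2-2\eta\tau/3+2\epsilon}$, which is $o(q^{2})=o(|P_{q}^{tr}|)$ once the auxiliary exponent is chosen small relative to $\eta\tau$. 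This gives $|Z_{x_{0}}|\ge(1-o_{\epsilon}(1))|P_{q}^{tr}|$ and completes the proof. (As in the $\SL_{2}$ case, $\iota$ has fibers of bounded size $\le|\SL_{3}(\Z)\cap K|$, and one replaces $\gamma$ by $\gamma\kappa^{-1}$ with $\kappa\in K\cap\Gamma$ to reach $y$ rather than a $K$-translate of it, which does not change $\n{\gamma}_{\mathrm{op}}$.)
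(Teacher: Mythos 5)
Your proposal is correct and follows essentially the same route as the paper: the same pair of gauges ($\n{\cdot}_{K}$ and the symmetric $\n{\cdot}_{\delta}$), the Harish-Chandra $\Xi$-function convolution bound, the uniform spectral gap from property (T), and the same unfolding/Cauchy--Schwarz endgame. The only (harmless) reorganization is that you sum $\n{b_{T,x_{0}}}_{2}^{2}$ over all base points and extract $Y$ by Markov afterwards, whereas the paper first extracts $Y$ from the counting bound of Theorem~\ref{thm:lattice point} and then runs partial summation pointwise for each $x_{0}\in Y$; your ordering is if anything slightly cleaner, since it does not require the counting hypothesis to hold for all radii $T'\le Cq^{2}$ simultaneously at a fixed base point.
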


Let $K=\SO(3)$ be the maximal compact subgroup of $G=\SL_3(\R)$.  By the Cartan
decomposition each element $g\in G$ can be written
as
\[
g=k_{1}\left(\begin{array}{ccc}
a_{1}\\
 & a_{2}\\
 &  & a_{3}
\end{array}\right)k_{2},
\]
with $k_{1},k_{2}\in \SO\left(3\right)$, and unique $a_{1},a_{2},a_{3}\in\R_{>0}$, satisfying $a_{1}\ge a_{2}\ge a_{3}>0$ and $a_{1}a_{2}a_{3}=1$. Define $\n{g}_K=a_{1}$.
Since $K=\SO\left(3\right)$ is compact there exists a constant $C>0$
such that
\[
C^{-1}\n g_{\infty}\le\n g_K\le C\n g_{\infty}.
\]
We may therefore prove Theorem~\ref{thm:main thm - alt version} using $\n{\cdot}_K$ instead of $\n{\cdot}_\infty$.

The size $\n{\cdot}_K$ will play the same role as $\n{\cdot}_\H$ in the $\SL_2$ case. Let us note some of its properties.
There is a constant $C>0$ such that $\n{g_1 g_2}_K \le C \n{g_1}_K\n{g_2}_K$ (actually, one may take $C=1$, but this detail will not influence us). A big difference from the $\SL_2$ case comes from the fact that $\n{\gamma}_K$ and $\n{\gamma^{-1}}_K$ can be quite different. However, it does hold that $\n{\gamma}_K \ll \n{\gamma^{-1}}_K^2$.

It will also be useful to define another bi-$K$ invariant gauge of largeness, by 
$\n{g}_\delta = a_1 a_3^{-1}$, where $a_1,a_3$ are as in the Cartan decomposition. It holds that there is a constant $C>0$ such that 
\begin{equation}
C^{-1} \n g_{\infty}\|g^{-1}\|_{\infty}\le\n{g}_\delta\le C\n g_{\infty}\|g^{-1}\|_{\infty}.\label{eq:equivalent distances}
\end{equation}

Now we have $\n{g}_\delta = \n{g^{-1}}_\delta$, and there is $C>0$ (which may be chosen to be $C=1$ by extra analysis) such that $\n{g_1 g_2}_\delta \le C \n{g_1}_\delta\n{g_2}_\delta$.

The relation between the two sizes is that $\n{g}_\delta \le \n{g}_K^3$, which follows from the fact that in the Cartan decomposition $a_3^{-1} = a_1a_2 \le a_1^2$, so $a_1 a_3^{-1} \le a_1^3$.

We will want to estimate the size of balls relative to $\n{\cdot}_K$ and $\n{\cdot}_\delta$. For this, we use the following formula for the Haar measure $\mu$ of $G$ (\cite[Proposition 5.28]{knapp2016representation}), which holds up to multiplication by a scalar:
\[
\intop_G f(g) d\mu = \intop_K \intop_K \intop_{\mathfrak{a}_+} f(k \exp(a) k') S(a) dk dk' da,
\]
where 
\[
\mathfrak{a}_+ = \left\{a = \left(\begin{array}{ccc}
\alpha_{1}\\
 & \alpha_{2}\\
 &  & \alpha_{3}
\end{array}\right) \in M_3(\R): \alpha_1\ge \alpha_2 \ge \alpha_3 , \alpha_1+\alpha_2+\alpha_3=0\right\},
\]
 and 
\[
S(a) = \sinh(\alpha_1-\alpha_2) \sinh(\alpha_2 - \alpha_3) \sinh(\alpha_3 - \alpha_1).
\]

Notice that for $\alpha_1-\alpha_2,\alpha_2-\alpha_3$ large, $S(a)$ behaves like $\n{a}_\delta^{2}$. This implies that 
\[
\mu\left(\left\{ g\in G:\n g_K\le T\right\}\right) \asymp T^6,
\]
and 
\[
\mu\left(\left\{ g\in G:\n g_\delta\le T\right\}\right) \asymp \log(T)T^2.
\]
See also \cite{maucourant2007homogeneous} for more accurate similar statements.

Let  $\chi_{T},\chi_{T,\delta} \in L^1(K \backslash G/K)$ be
\begin{align*}
\chi_{T}\left(g\right)&=\frac{1}{\mu\left(\left\{ g\in G:\n g_K\le T\right\}\right)}\begin{cases}
1 & \n g_K\le T\\
0 & \text{else}
\end{cases}, \\
\chi_{T,\delta}\left(g\right)&=\frac{1}{\mu\left(\left\{ g\in G:\n g_\delta\le T\right\}\right)}\begin{cases}
1 & \n{g}_\delta\le T\\
0 & \text{else}
\end{cases}.
\end{align*}
The functions $\chi_T,\chi_{T,\delta}$ are simply the probability characteristic functions of the balls according to $\n{\cdot}_K$ and $\n{\cdot}_\delta$.

Notice that for every $g\in G$, 
\[
\chi_T(g) \gg \log(T) \chi_{T^3,\delta} (g). 
\]

Let $\psi_{T}:G\to\R$ be 
\[
\psi_{T}\left(g\right)=\frac{1}{T}\begin{cases}
\n{g}_\delta^{-1} & \n{g}_\delta \le T\\
0 & \text{else}
\end{cases}.
\]

For $f: G \to \C$, we let $f^*:G\to \C$ be the function $f^*(g) = \overline{f\left(g^{-1}\right)}$.

Now we have the following version of Lemma~\ref{lem:ConvolutionLemmaSL2}:
\begin{lem}[Convolution Lemma]
\label{lem:convolution lemma}There exists a constant $C>0$ such
that for $T \ge 1$
\[
\chi_{T,\delta}*\chi_{T,\delta}\left(g\right)\le\left(\log\left(T\right)+2\right)^{C}\psi_{CT^{2}}\left(g\right).
\]
As a result, there exist a constant $C'>0$ such that for  $T \ge 1$
\[
\chi_{T}*\chi_{T}^{*}\le\left(\log\left(T\right)+2\right)^{C'}\psi_{C'T^{6}}\left(g\right).
\]
\end{lem}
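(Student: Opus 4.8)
The plan is to prove the two statements of Lemma~\ref{lem:convolution lemma} in sequence, with the first (the $\delta$-ball convolution bound) being the substantive one and the second a formal consequence. For the first statement, I would compute $\chi_{T,\delta}*\chi_{T,\delta}(g)$ directly as a volume. By bi-$K$-invariance it suffices to evaluate at $g$ with Cartan parameter $a\in\mathfrak a_+$, and then
\[
\chi_{T,\delta}*\chi_{T,\delta}(g) = \frac{1}{\mu(B_{T,\delta})^2}\,\mu\bigl(\{h\in G : \n{h}_\delta\le T,\ \n{h^{-1}g}_\delta\le T\}\bigr),
\]
where $B_{T,\delta}=\{\n{\cdot}_\delta\le T\}$ and $\mu(B_{T,\delta})\asymp\log(T)T^2$. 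So the whole game is to bound the measure of the intersection of two $\delta$-balls, one centered at the identity and one at $g$. Using sub-multiplicativity $\n{h^{-1}g}_\delta\le C\n{h}_\delta\n{g}_\delta$ one sees the intersection is empty unless $\n{g}_\delta\ll T^2$, which already gives the $\psi_{CT^2}$ support condition; the content is the decay in $\n{g}_\delta$, i.e. that the intersection volume is $\ll (\log T)^{C}\mu(B_{T,\delta})^2 / (T\n{g}_\delta)$.

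To get this decay I would pass to the $\mathfrak a_+$-coordinates via the integration formula quoted from Knapp, writing $h=k_1\exp(b)k_2$ with weight $S(b)\asymp \n{b}_\delta^2$ (for $b$ deep in the cone), and similarly parametrize the constraint $\n{h^{-1}g}_\delta\le T$. The key geometric input is that $\n{\cdot}_\delta = a_1/a_3$ is, up to bounded multiplicative error, the exponential of a $G$-invariant "distance-like" quantity on the symmetric space associated to the simple roots, so the two-ball intersection problem is governed by a triangle inequality in each root direction. Concretely, I expect that after reducing to the abelian part, the intersection measure factors (up to $\log$ losses) as a product over the two simple roots of one-dimensional ball-intersection integrals, each of which is the classical hyperbolic-plane computation that gives the $\psi_{T^2}$ bound in Lemma~\ref{lem:ConvolutionLemmaSL2}. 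The extra powers of $\log T$ and the constant $C$ absorb (i) the polynomial-in-$\log$ size of $\mu(B_{T,\delta})$ versus its would-be "main term" $T^2$, (ii) the transition regions near the walls of $\mathfrak a_+$ where $S(b)$ is not yet $\asymp \n{b}_\delta^2$, and (iii) the bounded multiplicative constants in sub-multiplicativity and in the norm equivalences. An alternative, and perhaps cleaner, route is spectral: expand $\chi_{T,\delta}$ against the spherical Plancherel measure of $\SL_3(\R)$, use that $\widehat{\chi_{T,\delta}*\chi_{T,\delta}}=|\widehat{\chi_{T,\delta}}|^2\ge0$, and bound the spherical transform of $\chi_{T,\delta}$ using standard estimates on spherical functions / the Harish-Chandra $\Xi$ function; the remark after the lemma statement ("In Lemma~\ref{lem:convolution lemma} we give a spectral proof\ldots which also works for $\SL_2(\R)$, but adds a factor that is logarithmic in $T$") signals that this is the intended proof and explains the $(\log T)^C$ loss.

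Granting the first statement, the second follows formally. Since $\n{g}_\delta\le\n{g}_K^3$, the $K$-ball of radius $T$ is contained in the $\delta$-ball of radius $T^3$, and by the measure asymptotics $\mu(B_{T,K})\asymp T^6$ while $\mu(B_{T^3,\delta})\asymp \log(T)T^6$, so these agree up to a factor $\log T$; hence pointwise $\chi_T \ll \log(T)\,\chi_{T^3,\delta}$, which is exactly the inequality recorded just before the lemma. Also $\chi_T=\chi_T^*$ because $\n{\cdot}_K$, being a function of the Cartan parameter, is inversion-invariant on $K\backslash G/K$; more directly $\chi_T^*$ is the characteristic function of $\{g:\n{g^{-1}}_K\le T\}$ and one only needs $\chi_T^*\ll\log(T)\chi_{T^3,\delta}$, which holds since $\n{\cdot}_\delta$ is inversion-invariant and $\n{g^{-1}}_\delta=\n{g}_\delta\le\n{g^{-1}}_K^3$. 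Therefore
\[
\chi_T*\chi_T^* \ll (\log T)^2\,\chi_{T^3,\delta}*\chi_{T^3,\delta} \le (\log T)^2(\log(T^3)+2)^C\,\psi_{C T^6} \ll (\log(T)+2)^{C'}\,\psi_{C'T^6},
\]
after renaming constants and noting $\psi_{CT^6}\le\psi_{C'T^6}$ for $C'\ge C$. The main obstacle is unambiguously the first statement: controlling the volume of the intersection of two $\delta$-balls (equivalently, bounding $|\widehat{\chi_{T,\delta}}|$) uniformly, with only a polylogarithmic loss — everything after that is bookkeeping with the norm inequalities $\n{g}_\delta\le\n{g}_K^3$ and the ball-volume asymptotics already established in the excerpt.
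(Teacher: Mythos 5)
Your handling of the second statement is essentially correct, though one claim in it is false: $\chi_T\ne\chi_T^*$ in $\SL_3(\R)$, since (as the paper stresses) $\n{g}_K$ and $\n{g^{-1}}_K$ can be very different, so $\n{\cdot}_K$ is not inversion-invariant on $K\backslash G/K$; your fallback via $\n{g}_\delta=\n{g^{-1}}_\delta\le\n{g^{-1}}_K^3$ is the correct fix and renders the error harmless. The genuine gap is in the first statement, where neither of your two sketches produces the bound. The geometric route rests on the unsubstantiated assertion that the intersection of two $\delta$-balls ``factors as a product over the two simple roots of one-dimensional ball-intersection integrals''; nothing in the geometry of $\mathfrak{a}_+$, the density $S(a)=\prod\sinh(\alpha_i-\alpha_j)$ (which involves the non-simple root), or the constraint $\n{h^{-1}g}_\delta\le T$ (which couples the Cartan coordinates of $h$ through its $K$-components) supports such a factorization, and you give no argument for it.

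The spectral route is the one the paper takes, but your description of it omits the mechanism that produces the pointwise decay in $g$. Bounding the spherical transform $\widehat{\chi_{T,\delta}}(\lambda)$ uniformly --- equivalently, bounding $\int\chi_{T,\delta}\,\Xi\,d\mu\ll(\log T+1)^{C}T^{-1}$ via the upper bound $\Xi(g)\ll(\log\n{g}+1)^{C_0}\n{g}_\delta^{-1}$ --- only controls an averaged quantity and cannot by itself distinguish $\psi_{CT^2}(g)\propto\n{g}_\delta^{-1}$ from a constant. One needs a second step. The paper's: since $\Xi$ is the diagonal matrix coefficient of the $K$-fixed vector in the representation induced from the trivial character of $P$, the functional $f\mapsto\int f\Xi$ is multiplicative under convolution, giving $\int(\chi_{T,\delta}*\chi_{T,\delta})\Xi\ll(\log T+1)^{2C}T^{-2}$; then a localization argument (the convolution at points $g'$ in an annulus of measure $\asymp\n{g}_\delta^2$ around $g$ is $\gg$ its value at $g$, after slightly enlarging $T$) combined with the \emph{lower} bound $\Xi\ge\n{\cdot}_\delta^{-1}$ converts this weighted integral bound into the pointwise one. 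Alternatively, within your Plancherel framing one would write $\chi_{T,\delta}*\chi_{T,\delta}(g)=\int|\widehat{\chi_{T,\delta}}(\lambda)|^2\varphi_\lambda(g)\,d\mu_{\mathrm{Pl}}(\lambda)$, use $|\varphi_\lambda(g)|\le\Xi(g)$ for tempered $\lambda$ together with Plancherel, $\int|\widehat{\chi_{T,\delta}}|^2d\mu_{\mathrm{Pl}}=\n{\chi_{T,\delta}}_2^2\asymp(\log(T)T^2)^{-1}$, and obtain the decay from the upper bound on $\Xi(g)$ --- the point being that the $g$-dependence comes from the spherical functions, not from the size of the transform. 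Either completion is short, but as written the proposal asserts the conclusion without the step that yields it.
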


\begin{proof}
Normalize $K$ to have measure 1. Let $\Xi:G\to\R_{+}$ be
Harish-Chandra's function, defined as
\[
\Xi\left(g\right)=\intop_{K}\delta^{-1/2}\left(gk\right)dk,
\]
where $\delta:G\to\R_{>0}$ is defined, using the Iwasawa decomposition
$G=KP$, as 
\[
\delta\left(k\left(\begin{array}{ccc}
a_{1} & * & *\\
0 & a_{2} & *\\
0 & 0 & a_{3}
\end{array}\right)\right)=a_{1}^{2}a_{3}^{-2}.
\]
(When restricted to $P$, $\delta$ is the modular function of $P$. Notice the similarity between $\delta(g)$ and $\n{g}_\delta^2$, hence the notation).

There are standard bounds on $\Xi$, given by (see, e.g., \cite[2.1]{trombi1972asymptotic})

\begin{equation}\label{eq:Xi bounds}
\n{g}_\delta^{-1}\le\Xi\left(g\right)\ll\left(\log\n{g}+1\right)^{C_{0}}\n{g}_\delta^{-1}    
\end{equation}

for some $C_{0}>0$. Using these upper bounds, we find that for some $C_{2}>0$,
\[
\intop_{G}\chi_{T,\delta}\Xi\left(g\right)dg=\frac{1}{\mu\left(\left\{ g\in G:\n g_\delta\le T\right\}\right)}\intop_{g:\n{g}_\delta\le T}\Xi\left(g\right)dg\ll\left(\log\left(T\right)+1\right)^{C_{2}}T^{-1}.
\]

Harish-Chandra's function $\Xi$ arises as follows (see, e.g., \cite[Section 3]{ghosh2013diophantine}). Let $(\pi,V)$ be the spherical representation of $G$ unitarily induced from the trivial character of $P$. It holds that if $f\in L^1(K\backslash G/K)$ and $v\in V$ is $K$-invariant, then 
\[
\pi(f)v=\intop_{G}f(g)\pi(g)vdg  = \left(  \intop_{G}f(g)\Xi(g)dg\right) v.
\]

Since $\pi(f_1*f_2)v=\pi(f_1)\pi(f_2)v$, 
\begin{align*}
\intop_{G}\left(\chi_{T,\delta}*\chi_{T,\delta}\right)\left(g\right)\Xi\left(g\right)dg & =\left(\intop_{G}\chi_{T,\delta}\left(g\right)\Xi\left(g\right)dg\right)\left(\intop_{G}\chi_{T,\delta}\left(g\right)\Xi\left(g\right)dg\right)\\
 & \ll\left(\log\left(T\right)+1\right)^{2C_{2}}T^{-2}.
\end{align*}

To show pointwise bounds, we notice that if $\chi_{T,\delta}*\chi_{T,\delta}\left(g\right)= R$, then $\chi_{T+1,\delta}*\chi_{T+1,\delta}\left(g'\right)\gg R$, for $g'$ in an annulus of size similar to that of $g$, i.e., for $C^{-1}\n{g}_\delta \le \n{g'}_\delta\le C\n{g}_\delta$ for some $C>1$. This annulus is of measure $\asymp\n{g}_\delta^2$. Therefore, 
\[
\chi_{T,\delta}*\chi_{T,\delta}\left(g\right)\n{g}_\delta^2\Xi(g) \ll \intop_{G}\left(\chi_{T+1,\delta}*\chi_{T+1,\delta}\right)\left(g'\right)\Xi\left(g'\right)dg' \ll\left(\log\left(T\right)+1\right)^{2C_{2}}T^{-2},
\]
and the first bound follows by applying the lower bound of Equation~\eqref{eq:Xi bounds}.

The bound on $\chi_{T}$ follows from the bound on $\chi_{T,\delta}$ and the relation between them.
\end{proof}

Now consider the locally symmetric space $X_{q}=\Gamma_{0}\left(q\right)\backslash G/K$. As in the $\SL_2$ case, it has finite measure, and we will consider the space $L^2\left(X_q\right)$, with the natural $L^2$-norm.

We first discuss the spectral gap. We denote by $L_{0}^{2}\left(X_{q}\right)$ the functions in $L^2\left(X_q\right)$ of integral $0$. Since $\chi_{T}$ is bi-$K$-invariant and sufficiently nice, the function $\chi_{T}$ acts by convolution from the right on
$f \in L^{2}\left(X_{q}\right)$, and the resulting function is well defined pointwise if $f$ is bounded. The operation sends $L_0(X_q)$ to itself.

\begin{thm}[Spectral Gap]\label{thm:spectral gap}
There exists $\tau>0$ such that for $T>0$ the operator $\chi_{T}$
satisfies for every $f\in L_{0}^{2}\left(X_{q}\right),$ 
\[
\n{f*\chi_{T}}_{2}\ll T^{-\tau}\n{f}_2.
\]
\end{thm}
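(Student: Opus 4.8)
The plan is to deduce the uniform spectral gap from the well-known property (T) for $\SL_3$, exactly as $\SL_2$'s version followed from Selberg's theorem, but now we genuinely need property (T) because $\SL_3(\R)$ has it (whereas $\SL_2(\R)$ does not, so one had to invoke Selberg). Concretely, property (T) for $\SL_3(\R)$ gives a Kazhdan constant, and more to the point, an explicit lower bound on the ``decay rate'' of matrix coefficients of any unitary representation without invariant vectors. Since $\SL_3(\Z)$ is a lattice in $\SL_3(\R)$, the representation of $G=\SL_3(\R)$ on $L^2_0(X_q)=L^2_0(\Gamma_0(q)\backslash G)$ has no $G$-invariant vectors (the only invariant vectors are constants, which we have quotiented out). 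Hence this representation is, up to the usual amplification, bounded away from the trivial representation uniformly in $q$.

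The key steps, in order, are as follows. First, I would recall the relevant consequence of property (T): there exists $\tau>0$, independent of the representation, such that for any unitary representation $(\rho,W)$ of $G$ without nonzero invariant vectors and any $K$-finite (say $K$-invariant) vector $w\in W$, the diagonal matrix coefficient satisfies $|\langle \rho(g)w,w\rangle|\ll \n{g}_\delta^{-\tau}\n{w}^2$, or equivalently is dominated by a power of Harish-Chandra's $\Xi$ function — this is precisely the ``explicit property (T)'' of Oh referenced in the introduction, combined with the $\Xi$-bounds of \eqref{eq:Xi bounds}. (Alternatively one can phrase it through the integrability of matrix coefficients: some fixed power of every such coefficient is in $L^p(G)$ for a fixed $p<\infty$ independent of the representation.) Second, I would apply this to $W=L^2_0(X_q)$: for bounded $f\in L^2_0(X_q)$, the operator norm of convolution by $\chi_T$ on $L^2_0$ is controlled by $\int_G \chi_T(g)\,\Phi(g)\,dg$ where $\Phi$ is the relevant bound on matrix coefficients, which by the computation already carried out in the proof of Lemma~\ref{lem:convolution lemma} (the estimate $\int_G \chi_{T,\delta}\Xi \ll (\log T+1)^{C_2}T^{-1}$, together with $\chi_T \gg \log(T)\chi_{T^3,\delta}$) is $\ll T^{-\tau'}$ for some $\tau'>0$. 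Third, to upgrade ``operator norm bound for a single $\chi_T$'' into the stated estimate I would simply note that the power of $T$ is what matters; absorbing logarithmic factors into a slightly smaller exponent gives $\n{f*\chi_T}_2 \ll T^{-\tau}\n{f}_2$ with the final $\tau>0$, and the bound extends from bounded functions to all of $L^2_0$ by density since convolution by $\chi_T\in L^1$ is a bounded operator.

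The routine part is the reduction from matrix-coefficient decay to the operator-norm bound via the spherical (Gelfand-pair) picture: since $\chi_T$ is bi-$K$-invariant, its action on $L^2_0(X_q)$ is diagonalized by the spherical decomposition, each spherical component contributes the spherical transform $\int_G \chi_T(g)\varphi_\lambda(g)\,dg$, and property (T) forces all occurring spherical parameters $\lambda$ to stay in a fixed compact ``tempered-plus-$\tau$'' region, on which $|\varphi_\lambda(g)|\ll \n{g}_\delta^{-\tau}(\log\n{g}+1)^{C}$ uniformly. I would state this as a known fact (it is the higher-rank analogue of translating Selberg's eigenvalue bound into a ball-averaging gap, cf.\ \cite[Section~4]{ghosh2013diophantine}) rather than reprove it.

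The main obstacle — and the only genuinely non-cosmetic point — is making sure the constant $\tau$ is truly independent of $q$. This is exactly where property (T) of the ambient group $\SL_3(\R)$ does the work that Selberg's conjecture/theorem did in rank one: the family $\{L^2_0(\Gamma_0(q)\backslash G)\}_q$ is a family of unitary $G$-representations with no invariant vectors, and property (T) gives a spectral gap that is uniform over all such representations at once, hence over all $q$. One must also confirm that $L^2_0(X_q)$ really has no $G$-invariant vectors — true because $\Gamma_0(q)\backslash G$ has finite volume and is connected, so the constants are the only invariants — and that passing to the congruence cover does not reintroduce a spectral obstruction near the trivial representation (it cannot, by property (T)). Everything else is a repackaging of the $\Xi$-function estimates already proved in Lemma~\ref{lem:convolution lemma}.
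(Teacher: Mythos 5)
Your proposal is correct and follows essentially the same route as the paper, which simply cites explicit property (T) for $\SL_3(\R)$ (equivalently, uniform quantitative mean ergodic theorems as in \cite[Section~4]{ghosh2013diophantine}) together with the $\Xi$-function bounds; the key point you correctly identify — uniformity of $\tau$ in $q$ coming from property (T) of the ambient group rather than any arithmetic input — is exactly what the paper relies on. Your writeup is just a more detailed unpacking of the paper's one-paragraph justification.
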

The theorem follows from explicit versions of property (T), or explicit versions of the mean ergodic theorem (e.g., \cite[Section~4]{ghosh2013diophantine}) which are actually true for all lattices in $G=\SL_3(\R)$ uniformly in $T$ and the lattice.
It is remarkable that the proof of Theorem~\ref{thm:spectral gap} is much simpler than the proof of Theorem~\ref{thm:selberg spectral gap}.

As in the $\SL_2$ case, we define for $x_0\in X_q$ the function $b_{T,x_0} (x)=\sum_{\gamma \in \Gamma_0(q)} \chi_T(\tilde{x}_0^{-1}\gamma x)$, where $\tilde{x}_0$ is any lift of $x_0$ to $G$. 

We have a map $\iota: \Gamma_{0}\left(q\right)\backslash \Gamma \to X_{q}$ defined by $\iota\left(\Gamma_{0}\left(q\right)x_{0}\right) = \Gamma_{0}x_{0}K\in X_{q}$. By a slight abuse of notation we write $ \iota\left(\Gamma_0(q)x_0\right) = \iota(x_0)$.

The map $\iota$ has fibers of bounded size (independently of $q$), and we may choose $T_0$ small enough so that $\iota(y) \ne \iota(y')$ implies that $b_{T_0,\iota(y)}$ and $b_{T_0,\iota(y')}$ have disjoint supports. In addition, $b_{T_0,\iota(y)}$ will have a bounded $L^2$-norm as a function in $L^2\left(X_q\right)$.

\begin{lem}\label{lem:unfolding}
For $f\in L^2(X_q)$ bounded, 
\[
\left\langle f , b_{T,x_0} \right \rangle = \left(f*\chi_T^*\right)(x_0).
\]
\end{lem}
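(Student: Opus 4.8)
The plan is to mimic the unfolding computation of Lemma~\ref{lem:unfolding SL2}, paying attention to the fact that $\chi_T$ is no longer symmetric under $g \mapsto g^{-1}$, which is exactly why the adjoint $\chi_T^*$ appears. First I would write out the inner product explicitly: treating $f$ as a left $\Gamma_0(q)$-invariant function on $G$ and using the definition $b_{T,x_0}(x) = \sum_{\gamma\in\Gamma_0(q)}\chi_T(\tilde{x}_0^{-1}\gamma x)$, I get
\[
\left\langle f, b_{T,x_0}\right\rangle = \intop_{x\in\Gamma_0(q)\backslash G/K} f(x)\sum_{\gamma\in\Gamma_0(q)}\overline{\chi_T(\tilde x_0^{-1}\gamma x)}\,dx.
\]
Here I should note $\chi_T$ is real-valued so the complex conjugate is harmless, but I keep it to be safe.

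Next I would use the left $\Gamma_0(q)$-invariance of $f$, writing $f(x) = f(\gamma x)$ for each $\gamma\in\Gamma_0(q)$, and fold the sum over $\gamma$ into the integral to unfold the quotient $\Gamma_0(q)\backslash G$ up to all of $G$:
\[
\left\langle f, b_{T,x_0}\right\rangle = \intop_{x\in G/K} f(x)\,\overline{\chi_T(\tilde x_0^{-1}x)}\,dx,
\]
where I use that $\chi_T$ is right $K$-invariant so the integrand descends to $G/K$, and that the Haar measure on $G$ is normalized to match the measure on $X_q$ under the covering. Then the key step: since $\chi_T$ is real, $\overline{\chi_T(\tilde x_0^{-1}x)} = \chi_T(\tilde x_0^{-1} x)$, and by definition of the adjoint $\chi_T^*(g) = \overline{\chi_T(g^{-1})}$, so $\chi_T(\tilde x_0^{-1}x) = \chi_T^*((\tilde x_0^{-1}x)^{-1}) = \chi_T^*(x^{-1}\tilde x_0)$. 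Substituting and recognizing the convolution formula $f*\chi_T^*(x_0) = \intop_{x\in G} f(x)\chi_T^*(x^{-1}x_0)\,dx$ (as recorded in the $\SL_2$ discussion of convolutions, which carries over verbatim) finishes the identity.

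The only genuinely delicate point — and the one I would state carefully rather than wave at — is the bookkeeping of which variable gets inverted and whether the convolution is on the left or right; in the $\SL_2$ case the symmetry $\chi_T = \chi_T^*$ collapsed this distinction, but here one must track it. Everything else (convergence, that $b_{T,x_0}\in L^2(X_q)$, the measure normalization, descent to $G/K$) is routine and already set up in the preamble to this lemma, so the proof is short. I do not expect any serious obstacle; the lemma is essentially a formal consequence of the definitions once the adjoint is inserted in the right place.
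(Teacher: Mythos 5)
Your proof is correct and is exactly the argument the paper intends: the paper simply says the proof is the same as Lemma~\ref{lem:unfolding SL2}, and your write-up carries out that unfolding while correctly replacing the symmetry step $\chi_T(x_0^{-1}x)=\chi_T(x^{-1}x_0)$ (which fails for $\SL_3$) by the identity $\chi_T(x_0^{-1}x)=\chi_T^*(x^{-1}x_0)$, which is precisely where the adjoint in the statement comes from.
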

The proof is the same as the proof of Lemma~\ref{lem:unfolding SL2}.

\begin{lem} \label{lem:lattice count to L2}
Let $C>0, \epsilon_0>0$ fixed. Let $x_{0}\in\Gamma_{0}\left(q\right)\backslash \Gamma$
and assume for
$T'\le Cq^{2}$,
\[
|\left\{ \gamma\in \Gamma:\n{\gamma}_\delta\le T',x_{0}\gamma=x_{0}\right\}| \ll_{\epsilon_{0}}q^{\epsilon_{0}}T^{\prime}.
\]
Then there exists $C'>0$ depending only on $C$ such that for $T=C'q^{1/3}$ it holds that
for every $\epsilon>0$,
\[
\n{b_{T,\iota(x_{0})}}_{2}\ll_{\epsilon_{0},\epsilon}q^{-1+\epsilon_{0}+\epsilon}.
\]
\end{lem}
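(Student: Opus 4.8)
The plan is to follow the exact same pattern as the $\SL_2$ argument in Lemma~\ref{lem:lattice count to L2 - SL2}, replacing the input Theorem~\ref{thm:counting SL2} with the hypothesis of the lemma, and the Convolution Lemma~\ref{lem:ConvolutionLemmaSL2} with its $\SL_3$ analogue Lemma~\ref{lem:convolution lemma}. First I would compute $\n{b_{T,\iota(x_0)}}_2^2$ via the unfolding identity. By Lemma~\ref{lem:unfolding}, $\n{b_{T,\iota(x_0)}}_2^2 = \langle b_{T,\iota(x_0)}, b_{T,\iota(x_0)}\rangle = (b_{T,\iota(x_0)} * \chi_T^*)(\iota(x_0))$, and expanding $b_{T,\iota(x_0)}$ as a sum over $\Gamma_0(q)$ this equals $\sum_{\gamma \in \Gamma_0(q)} (\chi_T * \chi_T^*)(\tilde{x}_0^{-1}\gamma \tilde{x}_0)$. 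The key point is that $\gamma \mapsto \tilde{x}_0^{-1}\gamma\tilde{x}_0$ ranges over a conjugate of $\Gamma_0(q)$, and the condition $\n{\tilde x_0^{-1}\gamma\tilde x_0}_\delta \le S$ is, up to the stabilizer structure, governed by the count $|\{\gamma\in\Gamma : \n\gamma_\delta \le S, x_0\gamma = x_0\}|$ — this is where I must be slightly careful, since the fixed-point condition $x_0\gamma=x_0$ (i.e. $\gamma$ lies in the stabilizer of $x_0$ in $\Gamma$, which is a conjugate of $\Gamma_0(q)$) is precisely what the hypothesis bounds. I would argue that the elements $\gamma$ of $\Gamma_0(q)$ with $\tilde x_0^{-1}\gamma\tilde x_0$ of bounded $\n\cdot_\delta$-size correspond, up to bounded multiplicity, exactly to elements of the stabilizer counted in the hypothesis.

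Next I would apply Lemma~\ref{lem:convolution lemma}: $\chi_T * \chi_T^* \ll (\log T + 2)^{C'} \psi_{C'T^6}$, so
\[
\n{b_{T,\iota(x_0)}}_2^2 \ll (\log T + 2)^{C'} \sum_{\gamma} \psi_{C'T^6}(\tilde x_0^{-1}\gamma\tilde x_0) = \frac{(\log T+2)^{C'}}{C'T^6}\sum_{\gamma : \n{\tilde x_0^{-1}\gamma\tilde x_0}_\delta \le C'T^6} \n{\tilde x_0^{-1}\gamma\tilde x_0}_\delta^{-1}.
\]
Then I would apply discrete partial summation (Equation~\eqref{eq:partial summation}) with $g(\gamma) = \n{\tilde x_0^{-1}\gamma\tilde x_0}_\delta$, $f(S) = S^{-1}$, $Y = C'T^6$, exactly as in the $\SL_2$ proof. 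The hypothesis of the lemma gives $|\{\gamma : g(\gamma) \le S, x_0\gamma = x_0\}| \ll_{\epsilon_0} q^{\epsilon_0} S$ for all $S \le Cq^2$; since $T = C'q^{1/3}$, we have $C'T^6 = C'^7 q^2$, so choosing $C'$ appropriately (depending on $C$) keeps us in the valid range $S \le Cq^2$ throughout the integral. The partial summation then yields a boundary term $\ll q^{\epsilon_0} \cdot C'T^6 / (C'T^6) = q^{\epsilon_0}$ and an integral term $\int_1^{C'T^6} S^{-2} \cdot q^{\epsilon_0} S \, dS = q^{\epsilon_0}\log(C'T^6) \ll_\epsilon q^{\epsilon_0 + \epsilon}$. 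Thus $\n{b_{T,\iota(x_0)}}_2^2 \ll_{\epsilon_0,\epsilon} T^{-6}\cdot T^\epsilon \cdot q^{\epsilon_0+\epsilon} \asymp q^{-2+\epsilon_0+\epsilon}$ (absorbing the logarithmic factors and the $T^\epsilon$ into $q^\epsilon$), giving $\n{b_{T,\iota(x_0)}}_2 \ll_{\epsilon_0,\epsilon} q^{-1+\epsilon_0+\epsilon}$ after adjusting $\epsilon$.

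The main obstacle I anticipate is the bookkeeping around conjugation and the stabilizer: one must verify that the lattice sum $\sum_{\gamma\in\Gamma_0(q)}(\chi_T*\chi_T^*)(\tilde x_0^{-1}\gamma\tilde x_0)$ is genuinely controlled by the hypothesized count of elements fixing $x_0$, not merely by a count over all of $\Gamma_0(q)$. Concretely, $\chi_T * \chi_T^*$ is supported on $\{g : \n g_\delta \lesssim T^6\}$, and an element $\gamma\in\Gamma_0(q)$ contributes only if $\tilde x_0^{-1}\gamma\tilde x_0$ lies in this ball; one then checks (as in Sarnak--Xue-type arguments) that such $\gamma$ — being congruent to an element of the stabilizer structure of $x_0$ mod $q$ — are in bounded correspondence with the stabilizer elements counted by the hypothesis, because $\Gamma_0(q)$ conjugated by $\tilde x_0$ relates to the stabilizer of $x_0$ up to the $\SL_3(\Z)$-conjugacy implicit in the definition of $P_q^{tr} = \Gamma_0(q)\backslash\Gamma$. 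Modulo this identification, every other step is a direct transcription of the $\SL_2$ argument, with the single additional logarithmic loss from Lemma~\ref{lem:convolution lemma} harmlessly absorbed into $q^\epsilon$. I would also need to double-check the constant $C'$ is chosen so that $C'T^6 \le Cq^2$, which forces $C' \le (C/C'^6)$-type constraint, i.e. $C'$ depends only on $C$ as claimed.
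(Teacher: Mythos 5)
Your proposal is correct and follows essentially the same route as the paper: unfold $\n{b_{T,\iota(x_0)}}_2^2$ into $\sum_{\gamma\in\Gamma_0(q)}(\chi_T*\chi_T^*)(\tilde x_0^{-1}\gamma\tilde x_0)$, apply the convolution lemma, and run partial summation against the hypothesized count, choosing $C'$ so that $C'^6$ times the convolution constant stays below $C$. The one point you flag as delicate is in fact clean: the stabilizer of $\Gamma_0(q)x_0$ in $\Gamma$ is exactly $x_0^{-1}\Gamma_0(q)x_0$, so $\gamma'\mapsto x_0^{-1}\gamma' x_0$ is a bijection from $\Gamma_0(q)$ onto the set counted in the hypothesis (no multiplicity loss), which is precisely how the paper rewrites the assumption before summing.
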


\begin{proof}
Notice that $\gamma\in \Gamma$ satisfies $\Gamma_{0}\left(q\right)x_{0}\gamma=\Gamma_{0}\left(q\right)x_{0}$
if and only if $\gamma\in x_{0}^{-1}\Gamma_{0}\left(q\right)x_{0}$ (the last group is a well defined subgroup of $\Gamma$).
Therefore we may rewrite the assumption in the following manner: For every $T'\le Cq^{2}$,
\begin{equation}
|\left\{ \gamma\in\Gamma_{0}\left(q\right):\n{x_{0}^{-1}\gamma x_0}_\delta\le T'\right\}| \ll_{\epsilon_{0}}q^{\epsilon_{0}}T^{\prime},\label{eq:lattice point count assumption}
\end{equation}
where we identify $x_0$ with a fixed element of  $\Gamma\le G$.

Write using Lemma~\ref{lem:unfolding}, 
\begin{align*}
\n{b_{T,\iota(x_0)}}_{2}^{2} & =\left\langle b_{T,\iota(x_0)},b_{T,\iota(x_0)}\right\rangle \\
 & =b_{T,\iota(x_{0})}*\chi_{T}^*(\iota(x_0)) \\
 & = \sum_{\gamma\in \Gamma_0(q)} (\chi_T*\chi_T^*)\left(x_0^{-1}\gamma x_0\right) \\
 & \ll_{\epsilon} T^{\epsilon} \psi_{C_{1}T^{6}} \left(x_0^{-1}\gamma x_0\right) ,
\end{align*}
where in the last inequality we used Lemma~\ref{lem:convolution lemma}.

Therefore, the lemma
will follow if we will prove that for $T=C'q^{1/3}$,
\begin{align*}
\sum_{\gamma\in\Gamma_{0}}\psi_{C_{1}T^{6}}\left(x_0^{-1}\gamma x_0\right)& =
T^{-6}\sum_{\gamma\in\Gamma_{0}\left(q\right):\n{x_{0}^{-1}\gamma x_{0}}_\delta\le C_1T^{6}}\n{x_{0}^{-1}\gamma x_{0}}_\delta^{-1} \\
& \ll
q^{-2}\sum_{\gamma\in\Gamma_{0}\left(q\right):\n{x_{0}^{-1}\gamma x_{0}}_\delta\le C_2q^2}\n{x_{0}^{-1}\gamma x_{0}}_\delta^{-1} \\
& \overset{!}{\ll}_{\epsilon}q^{-2+\epsilon_{0}+\epsilon},
\end{align*}
where $C_{2}=C_{1}C'^{6}$.

So it suffices to show that 
\[
\sum_{\gamma\in\Gamma_{0}\left(q\right):\n{x_{0}^{-1}\gamma x_{0}}_\delta^{-1}\le C_{2}q^{2}}\n{x_{0}^{-1}\gamma x_{0}}_\delta\overset{!}{\ll}_{\epsilon,\epsilon_0}q^{\epsilon+\epsilon_0}.
\]

We now apply Equation~\eqref{eq:partial summation}
(discrete partial summation), with $g\left(\gamma\right)=\n{\gamma}$,
$f\left(x\right)=x^{-1}$ and $Y=C_{2}q^{2}$ we have
\begin{align*}
\sum_{\gamma\in\Gamma_{0}\left(q\right):\n{x_{0}^{-1}\gamma x_{0}}_\delta\le C_{2}q^{2}}\n{x_{0}^{-1}\gamma x_{0}}_\delta^{-1}  \ll&|\left\{ \gamma:\n{x_{0}^{-1}\gamma x_{0}}_\delta\le C_{2}q^{2}\right\}| q^{-2}\\
 & +\intop_{1}^{C_{2}q^{2}}|\left\{ \gamma:\n{x_{0}^{-1}\gamma x_{0}}_\delta\le S\right\}| S^{-2}dS.
\end{align*}

Choosing $C'$ small enough so that $C_{2}=C_{1}C'^{6}\le C$ and
applying Equation~\eqref{eq:lattice point count assumption} we have that the last value satisfies
\begin{align*}
 & \ll_{\epsilon,\epsilon_0}q^{\epsilon+\epsilon_0}+q^{\epsilon+\epsilon_0}\intop_{1}^{C_{3}q^{2}}S^{-1}dS\\
 & \ll_{\epsilon}q^{2\epsilon+\epsilon_0},
\end{align*}
as needed.
\end{proof}

We denote by $\pi\in L^2(X_q)$ the constant probability function on $X_q$. 

Using the counting result Theorem~\ref{thm:lattice point} we will now show that for many points $x_0 \in \Gamma_0(q)\backslash \Gamma$ the condition of Lemma~\ref{lem:lattice count to L2} holds, and thus obtain:

\begin{lem}
\label{lem:main spectral lemma}There exists $C>0$, $\tau>0$, such that for every $\epsilon_{0}>0$, as $q\to\infty$ among primes, there exists a set $Y\subset\Gamma_{0}\left(q\right)\backslash \Gamma=P_{q}^{tr}$
of size $\left|Y\right|\ge\left(1-o_{\epsilon_{0}}(1)\right)\left|\Gamma_{0}\left(q\right)\backslash \Gamma\right|$,
such that for every $\Gamma_{0}x_{0}\in Y$, it holds
for $T=Cq^{1/3}$ that 
\[
\n{b_{T,\iota(x_{0})}*\chi_{T^\eta}-\pi}_2\ll_{\epsilon_{0}}q^{-1-\eta\tau+\epsilon_{0}}.
\]
\end{lem}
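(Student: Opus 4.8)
The plan is to combine the counting input of Theorem~\ref{thm:lattice point} with the $L^2$-bound of Lemma~\ref{lem:lattice count to L2} and the spectral gap of Theorem~\ref{thm:spectral gap}, following exactly the pattern of Lemma~\ref{lem:almost equidist L2 SL2} from the $\SL_2$ case. The only genuinely new ingredient is the passage from the ``total'' count in Theorem~\ref{thm:lattice point} to a pointwise count valid for almost every $x_0\in P_q^{tr}$, which is a Markov/averaging argument.

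\begin{proof}
Fix $\epsilon_0>0$ and a prime $q$. For $x_0\in P_q^{tr}=\Gamma_0(q)\backslash\Gamma$ and $T'>0$ set
\[
N(x_0,T')=|\left\{\gamma\in\Gamma:\n{\gamma}_\delta\le T',\ x_0\gamma=x_0\right\}|.
\]
By Equation~\eqref{eq:equivalent distances} there is a constant $c>0$ with $\n{\gamma}_\delta\le c\,\n{\gamma}_\infty\|\gamma^{-1}\|_\infty$, so taking $C$ as in Theorem~\ref{thm:lattice point} and $T'\le (C/c)q^2$ we have $\{\gamma:\n{\gamma}_\delta\le T'\}\subset\{\gamma:\n{\gamma}_\infty\|\gamma^{-1}\|_\infty\le cT'\}$, and hence Theorem~\ref{thm:lattice point} (applied with $cT'\le Cq^2$ and with exponent $\epsilon_0/2$) gives
\[
\sum_{x_0\in P_q^{tr}}N(x_0,T')\le |\left\{(\gamma,x)\in\Gamma\times P_q:\n{\gamma}_\infty\|\gamma^{-1}\|_\infty\le cT',\ \Phi_q(\gamma)x=x\right\}|\ll_{\epsilon_0}q^{2+\epsilon_0/2}T'.
\]
Apply this at the $O(\log q)$ dyadic values $T'=2^j$ with $1\le 2^j\le (C/c)q^2$. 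Let $Y$ be the set of $x_0\in P_q^{tr}$ such that $N(x_0,2^j)\le q^{\epsilon_0/2}\cdot(2\cdot 2^j/\epsilon_0)\cdot q^{\epsilon_0/4}$ for all such $j$; a union bound over the dyadic scales together with the displayed averaged estimate shows that the complement of $Y$ has size $\ll_{\epsilon_0}q^{2-\epsilon_0/4}\log q=o_{\epsilon_0}(|P_q^{tr}|)$, since $|P_q^{tr}|=|P_q|\asymp q^2$. For $x_0\in Y$, monotonicity of $N$ in $T'$ upgrades the dyadic bounds to all $T'\le (C/c)q^2$, giving $N(x_0,T')\ll_{\epsilon_0}q^{\epsilon_0}T'$; that is, the hypothesis of Lemma~\ref{lem:lattice count to L2} holds for every $x_0\in Y$ with the constant $C/c$ in place of $C$ there.

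Now fix $x_0\in Y$ and let $T=C'q^{1/3}$ be the value produced by Lemma~\ref{lem:lattice count to L2} for this input, so that $\n{b_{T,\iota(x_0)}}_2\ll_{\epsilon_0,\epsilon}q^{-1+\epsilon_0+\epsilon}$ for every $\epsilon>0$. Since $b_{T,\iota(x_0)}-\pi\in L_0^2(X_q)$ and $\pi*\chi_{T^\eta}=\pi$ (the convolution of a probability function against the constant function is the constant function), we compute
\begin{align*}
\n{b_{T,\iota(x_0)}*\chi_{T^\eta}-\pi}_2
&=\n{(b_{T,\iota(x_0)}-\pi)*\chi_{T^\eta}}_2\\
&\ll (T^\eta)^{-\tau}\n{b_{T,\iota(x_0)}-\pi}_2\\
&\le T^{-\eta\tau}\n{b_{T,\iota(x_0)}}_2\\
&\ll_{\epsilon_0,\epsilon}q^{-\eta\tau/3}\cdot q^{-1+\epsilon_0+\epsilon},
\end{align*}
where the first inequality is Theorem~\ref{thm:spectral gap}, the second uses that $b_{T,\iota(x_0)}-\pi$ is the orthogonal projection of $b_{T,\iota(x_0)}$ onto $L_0^2(X_q)$, and the last uses Lemma~\ref{lem:lattice count to L2} together with $T^{-\eta\tau}=(C')^{-\eta\tau}q^{-\eta\tau/3}\ll q^{-\eta\tau/3}$. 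Absorbing the fixed power of $C'$ and replacing $\tau$ by $\tau/3$ (still a positive constant independent of $q$), and then taking $\epsilon$ small, yields
\[
\n{b_{T,\iota(x_0)}*\chi_{T^\eta}-\pi}_2\ll_{\epsilon_0}q^{-1-\eta\tau+\epsilon_0},
\]
which is the asserted bound.
\end{proof}

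The step I expect to require the most care is the first one: making the ``pointwise for almost every $x_0$'' statement precise from the averaged count of Theorem~\ref{thm:lattice point}, i.e. choosing the exceptional set $Y$ so that the bound $N(x_0,T')\ll_{\epsilon_0}q^{\epsilon_0}T'$ holds \emph{simultaneously} for all $T'\le Cq^2$, not just at one scale. The dyadic decomposition plus monotonicity handles this cleanly, at the cost of a harmless $\log q$ and a slightly worse $\epsilon_0$; one must also be careful that the identification of $\{\gamma:x_0\gamma=x_0\}$ with $x_0^{-1}\Gamma_0(q)x_0$ and the translation between $\n{\cdot}_\delta$ and $\n{\cdot}_\infty\|\cdot^{-1}\|_\infty$ are applied with matching constants so that the hypothesis of Lemma~\ref{lem:lattice count to L2} is literally met. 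Everything after that is the formal spectral-gap manipulation, identical in structure to the $\SL_2$ argument.
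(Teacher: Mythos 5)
Your proof is correct and follows essentially the same route as the paper: translate the averaged count of Theorem~\ref{thm:lattice point} into a pointwise hypothesis for almost every $x_0$ via a Markov-type argument, feed that into Lemma~\ref{lem:lattice count to L2}, and finish with the spectral gap exactly as in Lemma~\ref{lem:almost equidist L2 SL2}. Your dyadic union bound making the bound on $N(x_0,T')$ uniform over all $T'\le Cq^2$ (as the hypothesis of Lemma~\ref{lem:lattice count to L2} genuinely requires) is a slightly more careful rendering of a step the paper states in one line, but it is the same argument.
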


\begin{proof}
By Theorem~\ref{thm:lattice point} and Equation~\eqref{eq:equivalent distances}
it holds that for some $C>0$, for all $T\le Cq^{2}$ and $\epsilon>0$
\[
\sum_{x_{0}\in\Gamma_{0}\left(q\right)\backslash \Gamma}|\left\{ \gamma\in \Gamma:\n{\gamma}_\delta\le T,x_{0}\gamma=x_{0}\right\}| \ll_{\epsilon}q^{2+\epsilon}T.
\]

Since $\left|\Gamma_{0}\left(q\right)\backslash \Gamma\right|=\left(1+o\left(1\right)\right)q^{2}$,
we may choose a subset $Y\subset\Gamma_{0}\left(q\right)\backslash \Gamma$
of size 
\[
\left|Y\right|\ge\left(1-o_{\epsilon_{0}}(1)\right)\left|\Gamma_{0}\left(q\right)\backslash \Gamma\right|,
\]
such that for every $x_{0}\in Y$, 
\[
|\left\{ \gamma\in \Gamma:\n{\gamma}_\delta\le T,x_{0}\gamma=x_{0}\right\}| \ll_{\epsilon_{0}}q^{\epsilon_{0}}T.
\]
We now apply Lemma~\ref{lem:lattice count to L2} to every $x_{0}\in Y$
to obtain 
\[
\n{b_{T,\iota(x_{0})}}_2\ll_{\epsilon_{0}}q^{-1+\epsilon_{0}}.
\]

Next, we apply Theorem~\ref{thm:spectral gap} as in Lemma~\ref{lem:almost equidist L2 SL2} to deduce the final result.
\end{proof}

We may now finish the proof of Theorem~\ref{thm:main thm - alt version}, similar to the $\SL_2$ case. First,

\begin{lem}\label{lem:lifting from spectral} There is $C'>0$ such that for  $x_0,y\in \Gamma_0(q) \backslash \Gamma$, if  $\left\langle b_{T,\iota(x_0)}*\chi_{T^\eta},b_{T_0,\iota(y)}\right\rangle>0$, then there is $\gamma \in \Gamma$ such that $x_0\gamma=y$, and $\n{\gamma}_K \le C' T^{1+\eta}$.
\end{lem}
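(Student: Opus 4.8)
The plan is to mimic the proof of Lemma~\ref{lem:lifting from spectral SL2} from the $\SL_2$ case, unwinding the positivity of the inner product into a chain of group elements and then tracking norms. First I would use Lemma~\ref{lem:unfolding} to rewrite the hypothesis $\left\langle b_{T,\iota(x_0)}*\chi_{T^\eta},b_{T_0,\iota(y)}\right\rangle>0$: since convolution by $\chi_{T^\eta}$ is self-adjoint up to passing to $\chi_{T^\eta}^*$, we get $\left(b_{T,\iota(x_0)}*\chi_{T^\eta}*\chi_{T_0}^*\right)(\iota(y))>0$, or, lifting $y$ to a representative $\gamma_y\in\Gamma$ with $\Gamma_0(q)\gamma_y = y$ and treating everything as a left $\Gamma_0(q)$-invariant, right $K$-invariant function on $G$, $\left(b_{T,\iota(x_0)}*\chi_{T^\eta}*\chi_{T_0}^*\right)(\gamma_y)>0$. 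Here one has to be mildly careful that $\chi_{T_0}^*$ is supported where $\n{g^{-1}}_K\le T_0$, equivalently $\n{g}_K\le C T_0^2$ by the relation $\n{g}_K\ll\n{g^{-1}}_K^2$; this is the only essential difference from $\SL_2$, where $\chi_T$ was symmetric.

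Next I would expand the convolution: positivity gives elements $g_1',g_2,g_3\in G$ with $g_1'\in\supp(b_{T,\iota(x_0)})$, $g_2\in\supp(\chi_{T^\eta})$, $g_3\in\supp(\chi_{T_0}^*)$ and $g_1'g_2g_3=\gamma_y$. Unravelling the definition of $b_{T,\iota(x_0)}$, there are $g_1\in\supp(\chi_T)$ and $\delta_0\in\Gamma_0(q)$ with $\tilde{x}_0^{-1}\delta_0 g_1'=g_1$, i.e.\ $g_1'=\delta_0^{-1}\tilde{x}_0 g_1$ (taking $\tilde{x}_0=x_0$ a fixed lift in $\Gamma$). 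Substituting, $\delta_0^{-1}x_0 g_1 g_2 g_3=\gamma_y$, hence $g_1 g_2 g_3 = x_0^{-1}\delta_0\gamma_y$. Set $\gamma := g_1 g_2 g_3$. Since $x_0^{-1}\delta_0\gamma_y\in\Gamma$ (note $\delta_0\in\Gamma_0(q)\le\Gamma$ and $x_0,\gamma_y\in\Gamma$), we have $\gamma\in\Gamma$, and moreover $\Gamma_0(q)x_0\gamma = \Gamma_0(q)x_0 x_0^{-1}\delta_0\gamma_y = \Gamma_0(q)\gamma_y$, i.e.\ $x_0\gamma = y$ in $\Gamma_0(q)\backslash\Gamma = P_q^{tr}$, which is the first claim.

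For the norm bound, I would use sub-multiplicativity of $\n{\cdot}_K$ up to a constant: $\n{\gamma}_K = \n{g_1 g_2 g_3}_K \ll \n{g_1}_K\n{g_2}_K\n{g_3}_K \le T\cdot T^\eta\cdot (CT_0^2)$, and since $T_0$ is an absolute constant this is $\ll T^{1+\eta}$, so $\n{\gamma}_K\le C'T^{1+\eta}$ for a suitable absolute $C'$ (absorbing the sub-multiplicativity constant and $CT_0^2$). I do not expect any real obstacle here — it is a direct transcription of the $\SL_2$ argument — but the point requiring the most care is bookkeeping with the adjoint: one must consistently track that $\chi_T^*$ (and $\chi_{T_0}^*$) is supported on the set $\{g : \n{g^{-1}}_K\le \cdot\}$ rather than $\{g:\n{g}_K\le\cdot\}$, and invoke $\n{g}_K\ll\n{g^{-1}}_K^2$ exactly once, for $g_3$, which costs only the harmless factor $T_0^2$. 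Everything else, including the identification of functions on $X_q$ with $\Gamma_0(q)$-invariant functions on $G$ and the choice of lift $\gamma_y$, is identical to the $\SL_2$ case.
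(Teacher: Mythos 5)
Your proposal is correct and follows essentially the same route as the paper's proof: unfold via Lemma~\ref{lem:unfolding} to get positivity of $b_{T,\iota(x_0)}*\chi_{T^\eta}*\chi_{T_0}^*$ at a lift of $y$, extract $g_1,g_2,g_3$ and an element of $\Gamma_0(q)$, and conclude by sub-multiplicativity of $\n{\cdot}_K$. Your extra care with $\supp(\chi_{T_0}^*)$ via $\n{g}_K\ll\n{g^{-1}}_K^2$ is a harmless refinement the paper absorbs into the constant (your aside attributing the appearance of $\chi_{T_0}^*$ to ``self-adjointness of $\chi_{T^\eta}$'' is a slight misattribution---the star comes from the unfolding lemma applied to $b_{T_0,\iota(y)}$---but the formula you use is the right one).
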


\begin{proof}
The proof is essentially the same as Lemma~\ref{lem:lifting from spectral SL2}. 
We have by Lemma~\ref{lem:unfolding} 
\[
b_{T,\iota(x_0)}*\chi_{T^\eta}*\chi_{T_0}^*(\iota(y_0))>0.
\]

Denote by $\tilde{x}_0,\tilde{y}$ as some lifts of $x_0,y$ to $\Gamma$. We get $g_1,g_2,g_3\in G$, $\gamma\in \Gamma_0(q)$ such that $\gamma^{-1} \tilde{x}_0 g_1g_2g_3=\tilde{y}$, with $g_1\in \supp(\chi_T)$, $g_2\in \supp(\chi_{T^\eta})$, $g_3 \in \supp\left(\chi_{T_0}^*\right)$. Writing $g = g_1g_2g_3$, we have that \[
\n{g}_K\ll \n{g_1}_K\n{g_2}_K\n{g_3}_K \ll T^{1+\eta}.
\]

In addition $g = \tilde{x}_0^{-1}\gamma \tilde{y} \in x_0^{-1}\Gamma_0(q) y \subset \Gamma$, which says that $x_0 \gamma =y$, as needed.
\end{proof}

To complete the proof, fix $\epsilon>0$. Let $x_0\in \Gamma_0(q) \backslash \Gamma$ be in the set $Y$ of Lemma~\ref{lem:main spectral lemma}.
Denote by $\tilde{Z}_{x_0}$ the set of elements $y\in \Gamma_0(q) \backslash \Gamma$ for which there is no $\gamma \in \Gamma$ with $\n{\gamma}_K \le q^{1/3+\epsilon}_K$ such that $x_0 \gamma =y$. It is enough to prove that $\tilde{Z}_{x_0} = o\left(\left|\Gamma_0(q) \backslash \Gamma\right|\right) = o\left(q^2\right)$. 

Choose $T=C q^{1/3}$, and $\eta$ small enough so that $C' T^{1+\eta} < q^{1/3+\epsilon}$, with $C$ as in Lemma~\ref{lem:main spectral lemma} and $C'$ as in Lemma~\ref{lem:lifting from spectral}. 

We denote $B = \sum_{y\in \tilde{Z}_{x_0}}b_{T,\iota(y)} \in L^2\left(X_q\right)$. Then by Lemma~\ref{lem:lifting from spectral}
\[
\left\langle
b_{T,x_0}*\chi_{T^\eta}-\pi, B
\right\rangle
= \frac{\left|\tilde{Z}_{x_0}\right|}{\text{Vol}(X_q)}\gg \frac{\left|\tilde{Z}_{x_0}\right|}{q^2}.
\]

On the other hand, by the choice of $x_0$ and Lemma~\ref{lem:main spectral lemma}, 
\begin{align*}
\left\langle
b_{T,x_0}*\chi_{T^\eta}-\pi, B
\right\rangle
&\ll
\n{B}_2 \n{b_{T,x_0}*\chi_{T^\eta}-\pi}_2 \\
& \ll_{\epsilon_0} \sqrt{\left|\tilde{Z}_{x_0}\right|} q^{-1-\eta\tau+\epsilon_0}.
\end{align*}

By combining the two estimates and choosing $\epsilon_0$ small enough, we get the desired result
\[
\left|\tilde{Z}_{x_0}\right| \ll_{\epsilon_0} q^{2-2\eta\tau-2\epsilon_0}=o\left(q^2\right).
\]

\section{Optimal Lifting for the Action on Flags}\label{sec:flag action}
In this section we prove optimal lifting for another action of $\SL_3(\Z)$. Let $B_q$ be the set of complete flags in $\F^3$, i.e.,
\[
B_q = \left\{\left(V_1,V_2\right) : 0 < V_1 < V_2 < \mathbb{F}_q^3\right\},
\]
i.e., $V_1\subset V_2$ are subspaces of $\F^3$, such that $\dim V_1=1$, $\dim V_2=2$.

There is a natural action action $\Phi_{q}\colon\SL_{3}\left(\Z\right)\to \Sym\left(B_{q}\right)$. It gives rise to a non-principal congruence subgroup
\[
\Gamma_{2}^{\prime}\left(q\right) =\left\{ \left(\begin{array}{ccc}
* & a & b\\
* & * & c\\
* & * & *
\end{array}\right)\in \SL_{3}\left(\Z\right):a=b=c=0\mod q\right\}.
\]
Concretley,
\[
\Gamma_{2}^\prime\left(q\right) = \left\{ \gamma\in \SL_{3}\left(\Z\right):\Phi_{q}\left(\gamma\right)(\boldsymbol{1})=\boldsymbol{1}\right\},
\]
where 
\[
\boldsymbol{1} = \left(\spann\{\left(\begin{smallmatrix}0\\0\\1\\\end{smallmatrix}\right)\},\spann\{\left(\begin{smallmatrix}0\\0\\1\end{smallmatrix}\right),\left(\begin{smallmatrix}0\\1\\0\end{smallmatrix}\right)\}\right)
\]

The result reads as follows:
\begin{thm}\label{thm:almost radius_flags}Let $\Gamma=\SL_{3}\left(\Z\right)$, and
for a prime $q$ let $B_q$ and $\Phi_{q}:\SL_{3}\left(\Z\right)\to\Sym\left(B_{q}\right)$ as above. Then for every $\epsilon>0$, as $q\to\infty$, there exists a set $Y\subset B_{q}$ of size $\left|Y\right|\ge\left(1-o_{\epsilon}(1)\right)\left|B_{q}\right|$,
such that for every $x\in Y$, there exists a set $Z_{x}\subset B_{q}$
of size $\left|Z_{x}\right|\ge\left(1-o_{\epsilon}(1)\right)\left|B_{q}\right|$,
such that for every $y\in Z_{x}$, there exists an element $\gamma\in\Gamma$
satisfying $\n{\gamma}_{\infty}\le q^{1/2+\epsilon}$, such
that $\Phi_{q}\left(\gamma\right)x=y$.
\end{thm}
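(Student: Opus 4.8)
The plan is to follow the exact same template used for Theorem~\ref{thm:almost radius}, replacing the projective plane $P_q$ by the flag variety $B_q$ and the group $\Gamma_0(q)$ by (a transpose of) $\Gamma_2'(q)$. As before, transpose the problem so that $\Gamma$ acts on the right on cosets of a congruence subgroup, i.e.\ set $B_q^{tr}=\Gamma_2(q)\backslash\Gamma$ for the appropriate transposed subgroup, and note $|B_q|\asymp q^3$. The three analytic ingredients from Section~\ref{sec:Second Section} carry over verbatim because they concern $G=\SL_3(\R)$ and an arbitrary lattice: the Convolution Lemma (Lemma~\ref{lem:convolution lemma}), the Spectral Gap (Theorem~\ref{thm:spectral gap}), the unfolding identity (Lemma~\ref{lem:unfolding}), and the lifting lemma (Lemma~\ref{lem:lifting from spectral}). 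So the only genuinely new input needed is an analogue of the lattice-point counting Theorem~\ref{thm:lattice point} for the flag action, which I expect to take the shape: there is $C>0$ so that for every prime $q$, every $T\le Cq^{2}$ and every $\epsilon>0$,
\[
\left|\left\{(\gamma,x)\in\SL_3(\Z)\times B_q : \n{\gamma}_\infty\|\gamma^{-1}\|_\infty\le T,\ \Phi_q(\gamma)x=x\right\}\right|\ll_\epsilon q^{2+\epsilon}T.
\]
Note $|B_q|\asymp q^3$ but the target exponent for the norm is $q^{1/2}$, and $q^{1/2}$ relative to $\n\cdot_\infty$ corresponds to $T\asymp q^{3/2}$ relative to $\n\cdot_\delta$ via $\n g_\delta\le\n g_K^3$; since the number of $\gamma$ with $\n\gamma_\delta\le T$ is $\asymp T^2\log T$, the heuristic count of solutions at scale $T=q^{3/2}$ is $\asymp q^2\cdot T = q^{7/2}$, matching $|B_q|=q^3$ times the per-point budget $q^{1/2}$, so the exponent $1/2$ is optimal here.

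Given such a counting statement, the deduction is mechanical: define $X_q=\Gamma_2(q)\backslash G/K$, the bump functions $b_{T,x_0}$, and run Lemma~\ref{lem:lattice count to L2} with $|B_q|\asymp q^3$ in place of $q^2$ — the per-point counting hypothesis $|\{\gamma:\n\gamma_\delta\le T',x_0\gamma=x_0\}|\ll_{\epsilon_0}q^{\epsilon_0}T'$ together with partial summation gives $\n{b_{T,\iota(x_0)}}_2\ll q^{-3/2+\epsilon_0+\epsilon}$ for $T=C'q^{1/2}$ (the exponent $-3/2$ because $\Vol(X_q)\asymp q^3$ and $\n{b}_2^2$ is governed by $q^{-3}$ times a bounded sum). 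Summing the counting theorem over $x_0\in B_q^{tr}$ and using $|B_q^{tr}|\asymp q^3$ produces a set $Y$ of density $1-o_{\epsilon_0}(1)$ on which this $L^2$-bound holds; applying Theorem~\ref{thm:spectral gap} gives $\n{b_{T,\iota(x_0)}*\chi_{T^\eta}-\pi}_2\ll q^{-3/2-\eta\tau+\epsilon_0}$, and then the Cauchy--Schwarz/positivity argument from the end of Section~\ref{sec:Second Section} with Lemma~\ref{lem:lifting from spectral} yields $|\tilde Z_{x_0}|\ll q^{3-2\eta\tau-2\epsilon_0}=o(q^3)$ after choosing $\eta$ so that $C'T^{1+\eta}<q^{1/2+\epsilon}$. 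Finally convert back via the equivalence $C^{-1}\n\gamma_\infty\le\n\gamma_K\le C\n\gamma_\infty$.

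The main obstacle is proving the flag-variety analogue of Theorem~\ref{thm:lattice point}. The strategy mirrors Section~\ref{sec:First Section}: a $\gamma$ fixing a flag $(V_1,V_2)$ mod $q$ has $V_1$ an eigenline and $V_2$ an invariant plane; for "generic" $\gamma$ (semisimple mod $q$ with distinct eigenvalues, say) the number of invariant flags is $O(1)$, so those contribute $\ll T^{2+\epsilon}\ll q^{2+\epsilon}T$ after noting $\#\{\gamma:\n\gamma_\delta\le T\}\asymp T^2\log T$. For the degenerate $\gamma$ — those with a repeated eigenvalue mod $q$, hence potentially a positive-dimensional family of fixed flags (e.g.\ $I$ fixes all of $B_q$, and $|B_q|\asymp q^3$) — one repeats the divisibility/divisor-bound dissection: if $\gamma\ne\pm I$ then $\gamma-\lambda I$ has rank $\ge1$ mod $q$, so $\det(\gamma-\lambda I)\equiv0\bmod q$ (and $\bmod q^2$ when the $\lambda$-eigenspace is a plane), which as in Equations~\eqref{eq: relation mod q}--\eqref{eq: trace relation mod q2} pins down traces of $\gamma$ and $\gamma^{-1}$ modulo powers of $q$ from the diagonal entries, forcing congruence conditions that save powers of $q$; the $2\times2$ minors then read $a_{ij}a_{ji}$ (or similar products) as known nonzero integers, so divisor bounds give $q^\epsilon$ choices for the off-diagonal entries. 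One must track that the number of fixed flags of a degenerate $\gamma$ is at most $O(q)$ when $\gamma\ne\pm I\bmod q$ (a line's worth of invariant flags through a fixed eigenplane, or dually), and handle the exceptional sub-cases $a_{ii}a_{jj}=b_{kk}$ by the same casework on which minors vanish as in the proof of Theorem~\ref{thm:lattice point}, with the flag structure making the stabilizer computation slightly different but no harder. I expect the bookkeeping to be somewhat more involved than in the projective-plane case because a flag imposes two incidence conditions rather than one, but the number-theoretic mechanism — congruences mod $q$ and $q^2$ from determinant and trace identities, plus the divisor bound — is identical, and the target bound $q^{2+\epsilon}T$ has the same shape.
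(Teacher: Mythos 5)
Your overall architecture is exactly the paper's: the analytic machinery of Section~\ref{sec:Second Section} carries over with $|B_q|\asymp q^3$ in place of $q^2$, and everything reduces to a flag analogue of Theorem~\ref{thm:lattice point}. However, you have misstated that key counting input, and the misstatement is not cosmetic. First, the bound $\ll_\epsilon q^{2+\epsilon}T$ is false: the single element $\gamma=I$ already contributes $|B_q|\asymp q^3$ fixed pairs, so no bound of the form $q^{2+\epsilon}T$ can hold for small $T$; the correct bound (Theorem~\ref{thm:lattice point flags}) is $\ll_\epsilon q^{3+\epsilon}T$. Second, the range $T\le Cq^2$ is insufficient. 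Your own deduction takes $T=C'q^{1/2}$ in the $\n{\cdot}_K$ gauge, and the $L^2$-computation in Lemma~\ref{lem:lattice count to L2} runs the partial summation over the support of $\chi_T*\chi_T^*$, i.e.\ up to $\delta$-radius $C_1T^6\asymp q^3$; so the counting hypothesis must hold for all $T'\le Cq^{3}$. (With only $T'\le Cq^2$ you could take at most $T\asymp q^{1/3}$, giving $\n{b_{T,\iota(x_0)}}_2\ll q^{-1+\epsilon}$, and the final Cauchy--Schwarz step then yields $|\tilde Z_{x_0}|\ll q^{4-2\eta\tau}$, which is vacuous against $|B_q|\asymp q^3$. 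Your heuristic at scale $T=q^{3/2}$ conflates the scale of the ball with the scale of the convolution square.)

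Getting the range right exposes the two genuinely new difficulties in the counting, which your sketch does not address. (1) In Section~\ref{sec:First Section} the assumption $T\le Cq^2$ forces $\min(\n{\gamma}_\infty,\n{\gamma^{-1}}_\infty)<q/2$, so $\gamma\equiv I\bmod q$ implies $\gamma=I$; with $T\le Cq^3$ there are many nontrivial $\gamma\equiv I\bmod q$ in range, each fixing \emph{all} of $B_q$, and one needs a separate count of these (the paper's Lemma~\ref{lem:flag counting principal}, which uses $(\gamma-I)^2\equiv0\bmod q^2$ and congruences mod $q^3$ to get $\ll_\epsilon q^\epsilon RS$). (2) For the bad (rank-one-deficient) $\gamma$, the Section~\ref{sec:First Section} step that recovers $\tr\gamma^{-1}$ exactly from Equation~\eqref{eq: trace relation mod q2} used $|\tr\gamma^{-1}|<q^2/2$, which is no longer automatic in the enlarged range; the paper replaces it with a separate count of solutions to the trace congruences (Lemma~\ref{lem:flag counting sublemma}), giving $\ll q$ options for $(\tr\gamma,\tr\gamma^{-1},\alpha)$, after which the diagonal-then-divisor-bound dissection proceeds. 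Your description of the degenerate casework is plausible in outline, but without the corrected statement and these two ingredients the proof does not close. (Minor point: $-I\notin\SL_3(\Z)$, so "$\gamma\ne\pm I$" should just be "$\gamma\ne I$".)
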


The exponent $1/2$ is optimal, since the size of $B_{q}$ is
$\left|B_{q}\right|\asymp q^{3}$, while the number of elements $\gamma\in \SL_{3}\left(\Z\right)$
satisfying $\n{\gamma}_{\infty}\le T$ is $\asymp T^{6}$. 
This also hints why handling flags is harder than handling the projective plane: The volume of the homogenous space is larger ($q^3$ instead of $q^2$).
In comparison, the principal congruence subgroup gives the much larger volume $q^8$, and optimal lifting for it is still open.

The proof of Theorem~\ref{thm:almost radius_flags} is very similar to the proof of Theorem~\ref{thm:almost radius}.
The analytic part is essentially identical to Section~\ref{sec:Second Section}, with some minor modifications coming from the fact that the size $|P_q|\asymp q^2$ is replaced by $|B_q|\asymp q^3$. We therefore leave it to the reader.

The counting part needs a slightly more delicate argument. The needed result is an analog of Theorem \ref{thm:lattice point}, as follows:

\begin{thm}
\label{thm:lattice point flags}There exists a constant $C>0$ such that for every prime
$q$, $T\le Cq^{3}$ and $\epsilon>0$ it holds that 
\[
\left|\left\{ \left(\gamma,x\right)\in \SL_{3}\left(\Z\right)\times B_q:\n{\gamma}_{\infty}\|\gamma^{-1}\|_{\infty}\le T,\Phi_{q}\left(\gamma\right)\left(x\right)=x\right\} \right|\ll_{\epsilon}q^{3+\epsilon}T.
\]
\end{thm}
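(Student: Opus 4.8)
The plan is to follow the structure of the proof of Theorem~\ref{thm:lattice point}, adapting the linear-algebra bookkeeping to the flag setting. As before, split the count by whether $\gamma \bmod q$ is \emph{generic} in its action on $B_q$ or not. If $\gamma\bmod q$ fixes few flags (say, at most $O(1)$), then summing over such $\gamma$ with $\n{\gamma}_\infty\|\gamma^{-1}\|_\infty\le T$ contributes $\ll T^{2+\epsilon}$ by the Maucourant counting estimate quoted in the introduction, which is $\ll q^{3+\epsilon}T$ as long as $T\le Cq^3$ --- actually here we have plenty of room since $T^{2}\ll q^{3}T$ whenever $T\ll q^3$. So the whole problem reduces to bounding the contribution of the ``bad'' $\gamma$ --- those whose projection fixes many flags. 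A flag $(V_1,V_2)$ is fixed by $\gamma\bmod q$ iff $V_1$ is an eigenline and $V_2$ is an invariant plane; the number of such flags is large precisely when $\gamma\bmod q$ has a repeated eigenvalue with a large eigenspace (so that many eigenlines $V_1$ are available) or a large space of invariant planes. As in Section~\ref{sec:First Section}, the key numerical miracle is that a non-identity $\gamma\bmod q$ can fix at most $\ll q$ flags (the flag variety has $\asymp q^3$ points, and the fixed-point locus of a non-central element is a proper subvariety, generically of dimension $\le 1$, hence $\ll q$ points). Choosing $C<1/4$ forces $\gamma\ne I \Rightarrow \gamma\bmod q\ne I$ exactly as in the $P_q$ case, so it suffices to prove
\[
|\{\gamma\in\SL_3(\Z): \n{\gamma}_\infty\|\gamma^{-1}\|_\infty\le T,\ \gamma\text{ bad mod }q\}|\ll_\epsilon q^{2+\epsilon}T.
\]

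To count bad $\gamma$, I would reuse the dyadic decomposition: set $S\le R$ with $RS\le Cq^2$ bounding $\n{\gamma}_\infty\le S$, $\n{\gamma^{-1}}_\infty\le R$, and identify residues mod $q$ (resp. mod $q^2$) with integer representatives of absolute value $<q/2$ (resp. $<q^2/2$), which is legitimate since $S,R<q/2$ and the relevant traces are $<q^2/2$. A bad $\gamma$ has an eigenvalue $\alpha\in\F^\times$ whose eigenspace or whose space of invariant planes is $2$-dimensional; in either case $\gamma\bmod q$ (or $\gamma^{-1}\bmod q$) satisfies a relation of the shape $(\gamma-\alpha I)(\gamma-\alpha^{-2}I)=0\bmod q$, i.e. $\gamma+\alpha^{-1}\gamma^{-1}=\alpha+\alpha^{-2}\bmod q$, together with the rank-$1$ condition $\det(\gamma-\alpha I)=0\bmod q^2$, yielding Equations analogous to \eqref{eq: trace relation} and \eqref{eq: trace relation mod q2}. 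Then: guess the diagonal of $\gamma$ in $\ll S^3$ ways, deduce $\alpha$ (three choices, as a root of a cubic determined by $\tr\gamma$), deduce $\tr\gamma^{-1}\bmod q^2$ hence exactly, deduce the diagonal of $\gamma^{-1}\bmod q$ hence in $\ll (R/q+1)^2$ ways after guessing two of the three entries; then recover the off-diagonal entries of $\gamma$ from the identities $a_{12}a_{21}=a_{11}a_{22}-b_{33}$ etc. via the divisor bound ($\ll_\epsilon q^\epsilon$ each) in the non-exceptional case where no such right-hand side vanishes. This gives $\ll_\epsilon q^\epsilon S^3(R/q+1)^2$ bad $\gamma$ in the main case, and one checks $S^3(R/q+1)^2\ll RSq$ under $S\le R$, $RS\le Cq^2$ exactly as in Section~\ref{sec:First Section} --- so in fact the bound $\ll_\epsilon q^{1+\epsilon}RS$ holds, which is even stronger than the $\ll_\epsilon q^{2+\epsilon}T$ we need, summing over $O(\log T)$ dyadic blocks.

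The exceptional cases ($a_{ii}a_{jj}=b_{kk}$ for some permutation $\{i,j,k\}$ of $\{1,2,3\}$) are handled as in Section~\ref{sec:First Section}: the vanishing $a_{12}a_{21}=a_{11}a_{22}-b_{33}=0$ combined with the rank-$1$ minors of $\gamma-\alpha I\bmod q$ forces $\gamma$ into a block-triangular shape, $\alpha=\pm1$, and then an explicit case analysis on whether the remaining off-diagonal entries vanish shows there are at most $\ll_\epsilon S^{2+\epsilon}\le RSq^\epsilon$ such $\gamma$, which is again within budget. I expect the main obstacle to be purely organizational rather than conceptual: for the flag action the ``bad'' stratum is genuinely richer than for $P_q$ --- besides the usual ``big eigenspace'' elements, one must also correctly account for elements fixing many \emph{invariant planes} (the dual phenomenon), and there may be additional exceptional sub-cases where a chain of vanishing products propagates further up the flag. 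Verifying that in every such sub-case the surviving count stays $\ll_\epsilon q^{1+\epsilon}RS$ (and never reaches the critical size $q^{2}T$) is the step that needs care; the arithmetic inputs (divisor bound, trace relations mod $q$ and mod $q^2$, counting via Maucourant) are all already available.
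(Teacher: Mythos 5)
Your reduction to counting bad $\gamma$ contains a genuine gap: you write that ``choosing $C<1/4$ forces $\gamma\ne I\Rightarrow\gamma\bmod q\ne I$ exactly as in the $P_q$ case,'' but that trick relies on $T\le Cq^2$, which guarantees $\min\left(\n{\gamma}_\infty,\n{\gamma^{-1}}_\infty\right)\le\sqrt{T}<q/2$. Here the range is $T\le Cq^3$, so $\sqrt{T}$ can be as large as $q^{3/2}$, and nontrivial elements of the principal congruence subgroup $\Gamma(q)$ genuinely enter the count: for instance $\gamma=I+qE_{12}$ has $\n{\gamma}_{\infty}\|\gamma^{-1}\|_{\infty}\asymp q^{2}\le Cq^{3}$, and every such $\gamma$ fixes \emph{all} $\asymp q^{3}$ flags. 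The paper therefore needs a separate, nontrivial counting lemma for $\left\{\gamma\equiv I\bmod q:\n{\gamma}_{\infty}\le S,\ \n{\gamma^{-1}}_{\infty}\le R\right\}$, showing there are $\ll_{\epsilon}q^{\epsilon}RS$ (in fact $\ll q^{\epsilon}(S/q+1)^{2}$) such elements; its proof uses $(\gamma-I)^{2}=0$, the congruences $\tr\gamma\equiv\tr\gamma^{-1}\equiv3\bmod q^{2}$ and $\tr\gamma\equiv\tr\gamma^{-1}\bmod q^{3}$, and a further case analysis. This entire stratum, which contributes at the critical order $q^{3+\epsilon}RS$, is absent from your proposal.

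A second problem is that you run the dyadic decomposition with $RS\le Cq^{2}$, whereas the theorem requires covering $RS\le Cq^{3}$. In the larger range the Section~\ref{sec:First Section} bookkeeping no longer closes: guessing the full diagonal of $\gamma$ gives $S^{3}(R/q+1)^{2}$ options, and with $S=R=q^{3/2}$ this is $q^{11/2}$, which exceeds the allowed $RSq^{2}=q^{5}$. The paper avoids this by first showing (via the system coming from Equations~\eqref{eq: trace relation} and \eqref{eq: trace relation mod q2}) that there are only $\ll(S/q+1)(R/q+1)+q\ll q$ admissible triples $\left(\tr\gamma,\tr\gamma^{-1},\alpha\right)$, and only then guessing two diagonal entries of $\gamma$, yielding $qS^{2}(R/q+1)^{2}\ll q^{2}RS$. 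Your instinct that the exceptional cases are ``organizational'' is broadly right, but both the principal-congruence count and the trace-triple lemma are missing ideas, not reorganizations, so the proposal as written does not prove the theorem.
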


We prove Theorem~\ref{thm:lattice point flags} in the rest of this section.

By dyadically dividing the range of $\n{\gamma}_{\infty}$ into $O\left(\log\left(T\right)\right)$
subintervals, it is enough to prove that there exists $C>0$ such that for
every $S\leq R$ and $RS \leq Cq^{3}$:
\[
\left|\left\{ \left(\gamma,x\right)\in \SL_{3}\left(\Z\right)\times B_q :\n{\gamma}_{\infty}\le S,\|\gamma^{-1}\|_{\infty}\le R, \Phi_{q}\left(\gamma\right)\left(x\right)=x \right\}\right| \ll_\epsilon q^{3+\epsilon}RS 
\]

We divide into several cases according to the Jordan form of $\gamma \mod q$. We leave the verification of the following to the reader:
\begin{enumerate}
    \item If $\gamma = I  \mod q$ then there are $|B_q|\asymp q^3$ different $x \in B_q$ such that $\Phi_q(\gamma)x = x$.
    \item If $\gamma \mod q$ has an eigenspace of dimension $2$, but is not the identity, i.e., the Jordan form of $\gamma \mod q$ is \[
    \left(\begin{array}{ccc}
1 & 1 & 0\\
0 & 1 & 0\\
0 & 0 & 1
\end{array}\right)
\text{ or }
\left(\begin{array}{ccc}
\alpha & 0 & 0\\
0 & \alpha & 0\\
0 & 0 & \alpha^{-2}
\end{array}\right),
\]
then there are $\asymp q$ different $x\in B_q$ such that $\Phi_q(\gamma)x = x$. As in Section~\ref{sec:First Section}, we call such $\gamma$ bad mod $q$.
\item For all other cases, there are $O(1)$ different $x\in B_q$ such that $\Phi_q(\gamma)x = x$.
\end{enumerate} 

Theorem~\ref{thm:lattice point flags} will therefore follow from the following two lemmas:
\begin{lem}\label{lem:flag counting principal}
There exists $C>0$ such that for
every $S\leq R$ and $RS \leq Cq^{3}$:
\[
\left|\left\{ \gamma\in \SL_{3}(\Z) :\n{\gamma}_{\infty}\le S,\|\gamma^{-1}\|_{\infty}\le R, \gamma = I \mod q \right\}\right| \ll_\epsilon q^\epsilon RS.
\]
\end{lem}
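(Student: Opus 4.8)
The plan is to mimic the structure of the proof of Theorem~\ref{thm:counting SL2}, now working in $\SL_3$. Write $\gamma = I + q M$ for some $M \in M_3(\Z)$, so that $\n{M}_\infty \le S/q$ (hence we may assume $S \ge q$, since otherwise the only solution is $\gamma = I$, contributing $1 \ll q^\epsilon RS$). Likewise $\gamma^{-1} = I + q N$ with $\n{N}_\infty \le R/q$. The key algebraic input is that $\gamma \equiv I \bmod q$ forces the characteristic polynomial of $\gamma$ to factor in a constrained way modulo powers of $q$: since $\gamma - I \in q M_3(\Z)$, each $2\times 2$ minor of $\gamma - I$ vanishes mod $q^2$ and $\det(\gamma - I) \equiv 0 \bmod q^3$. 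Expanding $\det(\gamma - xI) = 1 - \tr(\gamma^{-1}) x + \tr(\gamma) x^2 - x^3$ and setting $x = 1$ gives $\tr\gamma - \tr\gamma^{-1} \equiv 0 \bmod q^3$ — wait, more carefully: $\det(\gamma - I) = 1 - \tr\gamma^{-1} + \tr\gamma - 1 = \tr\gamma - \tr\gamma^{-1}$, so $\tr\gamma \equiv \tr\gamma^{-1} \bmod q^3$. Combined with $\tr\gamma \equiv 3 \bmod q$ (each diagonal entry is $1 \bmod q$), we also get finer congruences on the individual entries.

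The counting strategy is then: first choose the off-diagonal entries and diagonal entries of $\gamma$ subject to the mod-$q$ and mod-$q^2$ constraints, then use divisor bounds to control the remaining freedom. Concretely, the diagonal entries $a_{11}, a_{22}, a_{33}$ each lie in a residue class mod $q$ and are bounded by $S$, giving $\ll (S/q + 1)^3 = (S/q)^3$ choices (using $S \ge q$); similarly the off-diagonal $a_{ij}$ ($i \ne j$), being $\equiv 0 \bmod q$ and bounded by $S$, give $\ll (S/q)^6$ choices in total, so naively $\ll (S/q)^9$ — far too many. The fix, exactly as in the $\SL_2$ case, is to exploit the relation $\gamma^{-1} = I + qN$: knowing $\gamma$ determines $\gamma^{-1}$, and each entry $b_{ij}$ of $\gamma^{-1}$ is a $2\times 2$ minor of $\gamma$, hence vanishes mod $q^2$ (off-diagonal) or is $\equiv$ a specific value mod $q^2$ (diagonal). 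One expresses, say, $b_{33} = a_{11}a_{22} - a_{12}a_{21}$; in the "non-exceptional" case $a_{11}a_{22} - b_{33} \ne 0$, so once the diagonal of $\gamma$ and of $\gamma^{-1}$ are fixed, the divisor bound gives $\ll_\epsilon q^\epsilon$ choices for each pair $(a_{ij}, a_{ji})$ of off-diagonal entries, replacing the $(S/q)^6$ by $q^\epsilon$. Meanwhile the diagonal of $\gamma^{-1}$ is $\equiv$ a known value mod $q$ and bounded by $R$, so $\ll (R/q + 1)^3$ choices there, but the trace relation $\tr\gamma \equiv \tr\gamma^{-1} \bmod q^3$ (together with $RS \le Cq^3$ forcing $R \le q^3$, so the trace is determined mod $q^3$, hence exactly) cuts one of these degrees of freedom, yielding $\ll (R/q)^2$ for the diagonal of $\gamma^{-1}$ — actually one should aim for $\ll R/q \cdot (\text{something})$; the precise bookkeeping is: $\ll (S/q)^3 \cdot (R/q + 1)^2 \cdot q^\epsilon$ in the non-exceptional case.

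Then the final arithmetic check is to verify $(S/q)^3 (R/q+1)^2 \ll RS/q^{1-\epsilon}$, i.e. essentially $S^3 R^2 / q^5 \ll RS$ when $R \ge q$, i.e. $S^2 R \ll q^5$, which follows from $S \le R$ and $RS \le Cq^3$: indeed $S^2 R \le S R^2 \le \dots$ — one checks $S^2 R = (RS) \cdot S \le Cq^3 \cdot S$ and $S \le \sqrt{RS} \le \sqrt{C} q^{3/2}$, giving $S^2 R \ll q^{9/2} \le q^5$; and when $R < q$ one needs $S^3 \ll RSq$, i.e. $S^2 \ll Rq$, immediate from $S \le R < q$. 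The exceptional cases — where $a_{ii} a_{jj} = b_{kk}$ for some $\{i,j,k\} = \{1,2,3\}$, so a product of diagonal entries equals an off-diagonal minor and the divisor bound fails — are handled exactly as in Section~\ref{sec:First Section}: one uses that the rank-one structure of $\gamma - I \bmod q$ (all $2\times2$ minors vanish) forces many entries to be literally zero or forced into narrow ranges, and a short case analysis on which of $a_{31}, a_{21}$, etc.\ vanish shows the count is $\ll S^2 \le RS$ there too, which is better than needed. I expect the main obstacle to be the bookkeeping in the exceptional cases and getting the exponents in the non-exceptional count to land exactly at $RS$ (rather than something slightly larger) — the trace congruence mod $q^3$ is essential and must be used to kill one diagonal degree of freedom, and one must be careful that $RS \le Cq^3$ (not $q^2$ as in Theorem~\ref{thm:lattice point}) is exactly what makes the final inequality work.
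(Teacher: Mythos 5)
Your non-exceptional count is essentially sound and lands where it needs to: assuming $S\ge q$ (otherwise $\gamma=I$ is forced), one gets $R\le Cq^{2}$ from $RS\le Cq^{3}$, the diagonal of $\gamma$ contributes $\ll(S/q)^{3}$ choices, the congruence $\tr\gamma\equiv\tr\gamma^{-1}\pmod{q^{3}}$ then determines $\tr\gamma^{-1}$ exactly and cuts the diagonal of $\gamma^{-1}$ to $\ll(R/q+1)^{2}$ choices, and the divisor bound applied to $a_{ij}a_{ji}=a_{ii}a_{jj}-b_{kk}\ne0$ finishes; the closing inequality $S^{2}R\ll q^{9/2}\ll q^{5}$ is correct. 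This gives $q^{\epsilon}RS$ as required, whereas the paper works harder and proves the sharper bound $\ll q^{\epsilon}(S/q+1)^{2}$; up to that, the non-exceptional bookkeeping is the same.

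The genuine gap is in the exceptional cases, where $a_{ii}a_{jj}=b_{kk}$. You propose to handle them ``exactly as in Section~\ref{sec:First Section}'' via ``the rank-one structure of $\gamma-I\bmod q$,'' but that structure is vacuous here: $\gamma-I\equiv0\pmod q$, so all its minors vanish mod $q$ (and mod $q^{2}$, and its determinant mod $q^{3}$) automatically and carry no information; moreover, unlike Section~\ref{sec:First Section} where $\n{\gamma}_\infty<q/2$ lets one identify entries with their residues mod $q$, here $S$ may be as large as $q^{3/2}$. The input you are missing --- and the heart of the paper's proof --- is that $(\gamma-I)^{2}\equiv0\pmod{q^{2}}$ yields $\gamma^{-1}\equiv 2I-\gamma\pmod{q^{2}}$, and since both sides have entries of absolute value below $q^{2}/2$ (using $R\le Cq^{2}$ with $C$ small and $S\ll q^{3/2}$) this upgrades to the exact identity $\gamma^{-1}=2I-\gamma$, i.e.\ $(\gamma-I)^{2}=0$ over $\Z$, so $\gamma-I$ has rank at most $1$ and \emph{all} its $2\times2$ minors vanish exactly. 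Without this, the exceptional analysis does not close: a single off-diagonal entry left unconstrained costs a factor $S$, giving $q^{\epsilon}S\cdot(S/q)^{3}(R/q+1)^{2}=q^{\epsilon}S^{4}R^{2}/q^{5}$, and for $S\asymp R\asymp q^{3/2}$ this is $\asymp q^{\epsilon+4}$ while the target $q^{\epsilon}RS$ is only $q^{\epsilon+3}$. Your claimed final bound $\ll S^{2}$ for the exceptional contribution is in fact what the paper's rank-one parametrization ($a_{11}=1+qa$, $a_{22}=1+qb$, $a_{33}=1-q(a+b)$, with relations such as $a_{12}a_{21}=q^{2}ab$) delivers, but it requires this exact rank-one identity, not the mod-$q$ information you cite.
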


\begin{lem}\label{lem:flag counting bad}

There exists $C>0$ such that for
every $S\leq R$ and $RS \leq Cq^{3}$:
\[
\left|\left\{ \gamma\in \SL_{3}(\Z) :\n{\gamma}_{\infty}\le S,\|\gamma^{-1}\|_{\infty}\le R, \gamma \operatorname{\ bad\ mod }q \right\}\right| \ll_\epsilon q^{2+\epsilon} RS.
\]
\end{lem}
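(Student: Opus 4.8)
The plan is to mirror the argument of Section~\ref{sec:First Section} that bounds bad $\gamma$ for the projective action, but now tracking that a "bad mod $q$" element for the flag action is exactly an element whose reduction mod $q$ has a two-dimensional eigenspace; hence the algebraic constraints \eqref{eq: relation mod q}, \eqref{eq: trace relation}, \eqref{eq: trace relation mod q2} apply verbatim. As before, write $\alpha\in\F^\times$ for the eigenvalue whose eigenspace is $2$-dimensional, so the third eigenvalue is $\alpha^{-2}$, and $(\gamma-\alpha I)(\gamma-\alpha^{-2}I)=0 \bmod q$, while $\det(\gamma-\alpha I)=0\bmod q^2$. Identify entries of $\F$ (and of $\Z/q^2\Z$) with integer representatives of absolute value $<q/2$ (resp. $<q^2/2$), which is legitimate since $S\le R$ and $RS\le Cq^3$ force $S\le\sqrt{RS}\le\sqrt{C}\,q^{3/2}$, hence $S<q^2/2$ for $C$ small; we cannot always recover $a_{ij}$ from $a_{ij}\bmod q$ now, only from $a_{ij}\bmod q^2$ — this is the key new wrinkle compared to Section~\ref{sec:First Section}, where $S<q/2$ held.

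First I would run the same counting skeleton: there are $\ll S^3$ choices of the diagonal $a_{11},a_{22},a_{33}$ of $\gamma$, which fixes $\tr\gamma$; then \eqref{eq: trace relation} makes $\alpha$ a root of an explicit cubic, giving $\le 3$ choices for $\alpha$; then \eqref{eq: trace relation mod q2} determines $\tr\gamma^{-1}\bmod q^2$, and since $|\tr\gamma^{-1}|\le 3R<3\sqrt{C}q^{3/2}<q^2/2$ this pins down $\tr\gamma^{-1}$ exactly. Now \eqref{eq: relation mod q} gives the diagonal $b_{11},b_{22},b_{33}$ of $\gamma^{-1}$ modulo $q$, so $\ll (R/q+1)^2$ choices for $(b_{11},b_{22})$ (with $b_{33}$ then forced by $\tr\gamma^{-1}$). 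In the non-exceptional case $a_{ii}a_{jj}\ne b_{kk}$ for all $\{i,j,k\}=\{1,2,3\}$, the identities $a_{12}a_{21}=a_{11}a_{22}-b_{33}\ne 0$ etc. and the divisor bound give $\ll_\epsilon q^\epsilon$ choices for each off-diagonal pair, for a total of $\ll_\epsilon q^\epsilon S^3(R/q+1)^2$. The remaining arithmetic is then exactly: show $S^3(R/q+1)^2\ll q^2 RS$ whenever $S\le R$ and $RS\le Cq^3$. If $R\le q$ this is $S^2\ll q^2\cdot q$, clear since $S\le q$. If $R>q$ it is $S^2 R\ll q^4 \cdot q/q^{?}$... more precisely $S^3R^2/q^2\ll q^2RS$ i.e. $S^2R\ll q^4$; since $RS\le Cq^3$ and $S\le\sqrt{RS}\le\sqrt C q^{3/2}$ we get $S^2R=S\cdot SR\le \sqrt C q^{3/2}\cdot Cq^3 = C^{3/2}q^{9/2}$ — which is \emph{not} $\ll q^4$. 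So the naive bound loses; the extra factor $q^2$ (versus $q^1$ in Theorem~\ref{thm:lattice point}) is exactly the slack we are given, and in fact $S^2R\le (RS)\cdot S\le Cq^3\cdot\sqrt C q^{3/2}$, hmm, I should instead just verify $S^3(R/q+1)^2\ll q^2RS$ directly: for $R>q$ it becomes $S^2R\ll q^4$, and $S^2R\le S\cdot RS\le \sqrt{RS}\cdot RS \le (Cq^3)^{3/2}=C^{3/2}q^{9/2}$, still too big — so I will need to be more careful and probably exploit $S\le R$ more, namely $S^2R\le S\cdot R\cdot R$... The right move: the counting also shows $S\le R$ and we have freedom to instead bound the number of $(b_{11},b_{22})$ using $b$-bounds and reduce via $\gamma\leftrightarrow\gamma^{-1}$ if needed — but since $S\le R$ is already assumed I expect $S^3(R/q+1)^2\ll q^2RS$ to hold once $RS\le Cq^3$ with small enough $C$; I will check both regimes $R\le q^{3/2}$ and $R>q^{3/2}$ separately.

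The exceptional cases — where $a_{ii}a_{jj}=b_{kk}$ for some $\{i,j,k\}=\{1,2,3\}$ — I would handle exactly as in Section~\ref{sec:First Section}: WLOG $a_{11}a_{22}=b_{33}$, so $a_{12}a_{21}=0$; the rank-one condition on $\gamma-\alpha I\bmod q$ forces (WLOG) $a_{11}\equiv\alpha$, then $a_{21}=0$, and the vanishing $2\times 2$ minors \eqref{eq:excep21}, \eqref{eq:excep22} split into the sub-cases $a_{31}=0$ (leading to a $2\times 2$ block $A$ with $\alpha\det A=1$, forcing $\alpha=\pm1$ and few options) and $a_{31}\ne 0$ (forcing $\gamma$ nearly diagonal). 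In each sub-case one gets $\ll_\epsilon S^{2+\epsilon}$ or $\ll S^2$ matrices, which is $\le q^\epsilon RS$ and a fortiori $\le q^{2+\epsilon}RS$. The only place I anticipate genuine friction is the lift-from-mod-$q$-to-$\Z$ step: in Section~\ref{sec:First Section} one had $S<q/2$ so entries were literally determined by their residues mod $q$, whereas here $S$ can be as large as $\asymp q^{3/2}$. I will need to carry the $b_{ii}\bmod q$ information as "$\ll R/q+1$ lifts" rather than "one lift," which is already built into the skeleton above, and to confirm that no step secretly used $S<q/2$; the trace computations only ever used $|\tr\gamma^{-1}|<q^2/2$, which survives. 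Assembling the non-exceptional count $\ll_\epsilon q^\epsilon S^3(R/q+1)^2$, the exceptional count $\ll_\epsilon q^\epsilon RS$, and the elementary inequality $S^3(R/q+1)^2\ll q^2 RS$ (valid for $RS\le Cq^3$, $S\le R$, $C$ small), and summing over the $O(\log T)$ dyadic ranges of $\n{\gamma}_\infty$, yields Lemma~\ref{lem:flag counting bad}. The analogous and easier Lemma~\ref{lem:flag counting principal} follows the template of Theorem~\ref{thm:counting SL2} applied to $3\times 3$ matrices congruent to $I$ mod $q$, using $\det(\gamma-I)\equiv 0\bmod q^3$ together with $\det\gamma=1$ to control the diagonal, and divisor bounds for the off-diagonal entries.
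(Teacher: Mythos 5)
Your skeleton is the right one, but the main non-exceptional count does not close, and you essentially notice this yourself without resolving it. Choosing all three diagonal entries of $\gamma$ freely gives $\asymp S^3$ options, and after the $(R/q+1)^2$ lifts of $b_{11},b_{22}$ you need $S^3(R/q+1)^2\ll q^2RS$. This inequality is \emph{false} in the permitted range: at $S=R=q^{3/2}$ (so $RS=q^3$) the left side is $\asymp q^{9/2}\cdot q=q^{11/2}$ while the right side is $q^5$, and no choice of the constant $C$ repairs a loss of a power $q^{1/2}$. The missing ingredient is the paper's Lemma~\ref{lem:flag counting sublemma}: Equations~\eqref{eq: trace relation} and \eqref{eq: trace relation mod q2} jointly admit only $\ll (S/q+1)(R/q+1)+q\ll q$ triples $(\tr\gamma,\tr\gamma^{-1},\alpha)$ in the relevant range (using $R\ll q^2$, which follows from $R\le 2S^2$ and $RS\le Cq^3$). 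The paper therefore first enumerates these $\ll q$ admissible triples, and only then chooses $a_{11},a_{22}$ in $\le S^2$ ways with $a_{33}$ forced by the known trace; this replaces your $S^3$ by $qS^2$, and $qS^2(R/q+1)^2\ll q^2RS$ does hold in both regimes $R\le q$ and $R>q$. In other words, the order of quantification matters: not every integer in $[-3S,3S]$ is an admissible value of $\tr\gamma$, and exploiting that constraint is exactly the $S/q\asymp q^{1/2}$ saving you are short.

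A secondary gap: you dispatch the exceptional cases ``exactly as in Section~\ref{sec:First Section}'' with bounds $\ll_\epsilon S^{2+\epsilon}$, but that section's argument repeatedly uses $S<q/2$ to identify entries with their residues mod $q$ (e.g.\ setting $a_{11}=\alpha$ after lifting $\alpha$), which fails here since $S$ may be as large as $\asymp q^{3/2}$. You flag this wrinkle but do not carry out the revised analysis, and the claimed bounds are not correct as stated: the paper's exceptional analysis runs through five cases, and some branches genuinely produce more than $S^{2+\epsilon}$ matrices --- for instance the case $a_{31}=0$ yields $\ll_\epsilon q^\epsilon S^3(S/q+1)$, which still satisfies the target $\ll q^2RS$ but is far from $\ll RS$. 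So both halves of the argument require the quantitative repair, not a routine transcription of Section~\ref{sec:First Section}.
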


\begin{proof}[Proof of Lemma~\ref{lem:flag counting principal}]
We will actually work a little harder than necessary to show that the count is at most $\ll q^\epsilon (S/q+1)^2$, which is tight up to $q^\epsilon$. 

Using ideas of \cite[Chapter 6]{heath2015spectral}, 
since for all $x$ it holds that
$\gamma - (1+xq)I = 0 \mod q$, it follows that \[\det(\gamma-(1+xq)I)=1-(1+xq)\tr\gamma^{-1}+(1+xq)^2\tr\gamma-(1+xq)^3=0 \mod q^3.\]
By equating coefficients, one obtains:
\begin{align*}
\tr \gamma &=\tr \gamma^{-1} = 3 \mod q^2\\
\tr\gamma &=\tr \gamma^{-1} \mod q^3
\end{align*}
In addition, 
$(\gamma-I)^2 = 0\mod q^2$, so 
$\gamma^{-1}= 2I-\gamma \mod q^2$.
We may assume that $\n{\gamma^{-1}}_\infty<q^2/2$,
thus 
$\gamma^{-1}=2I-\gamma$
and
$(\gamma-I)^2 = 0$.

Let us use the above information in order to parametrize the diagonals of $\gamma, \gamma^{-1}$. As in Section~\ref{sec:First Section}, we denote the entries of $\gamma$ by $a_{ij}$ and the entries of $\gamma^{-1}$ by $b_{ij}$.

Write
$a_{11} = 1+qa$, $a_{22} = 1+qb$, $a_{33}=1-q(a+b)$,
$b_{11} = 1-qa$, $b_{22} = 1-qb$, $b_{33} = 1+q(a+b)$. Then:
\begin{align*}
\gamma &= \left(\begin{array}{ccc}
1+qa & * & *\\
* & 1+qb & *\\
* & * & 1-(a+b)q
\end{array}\right)
\\
\gamma^{-1} &= \left(\begin{array}{ccc}
1-qa & * & *\\
* & 1-qb & *\\
* & * & 1+(a+b)q
\end{array}\right)
\end{align*}

The non-exceptional case happens if $a,b,a+b$ are all non-zero. Then we shall recover all of $\gamma$, up to $q^\epsilon$ options as follows. It holds that:
\[
a_{12}a_{21} = a_{11}a_{22}-b_{33}= 1+(a+b)q+q^2ab - (1+(a+b)q)=q^2ab\neq 0
\]
so using the divisor bound, we recover $a_{12},a_{21}$ up to $q^{\epsilon}$ options, and similarly for the other entries.

If all three $a,b,a+b$ vanish, then among any pair $1\le i<j\le3$, $a_{ij}a_{ji}=0$. There are $3$ such pairs, so we recover $\gamma$ up to $\ll\left(S/q + 1\right)^3$ options.
As a matter of fact, one can improve this estimate: Either up to permutations $\gamma$ is upper triangular, or it has a single non-diagonal non-zero contribution to the determinant. The diagonal contributes 1 to the determinat, so there cannot be a non-diagonal contribution. Therefore, we may assume that $\gamma$ is upper triangular. We know that $(\gamma-I)^2=0$, so $a_{12}a_{23}=0$. Thus there actually only $\ll\left(S/q + 1\right)^2$ options for $\gamma$.

For the remaining case, we may thus assume $a+b=0$, so $a=-b\ne 0$. Hence, $\gamma$ and $\gamma^{-1}$ are of the form:
\begin{align*}
\gamma &= \left(\begin{array}{ccc}
1+qa & cq & *\\
dq & 1-qa & *\\
* & * & 1
\end{array}\right)
\\
\gamma^{-1} &= \left(\begin{array}{ccc}
1-qa & -cq & *\\
-dq & 1+qa & *\\
* & * & 1
\end{array}\right).
\end{align*}

There are $\ll(S/q+1)$ options for $a$. 

It holds that
$cdq^2=a_{12}a_{21}=a_{11}a_{22}-b_{33}=1-q^2a^2-1$, 
hence
$cd=-a^2$.
Since $a\neq 0$ we obtain $c,d$ from $a$ up to $q^{\epsilon}$.

Next, let us note that:
$a_{31}a_{13}=a_{11}a_{33}-b_{22}=0$, and similarly $a_{23}a_{32}=0$. 

If both $a_{13},a_{31}=0$, there are $\ll (S/q+1)$ options for $a_{23},a_{32}$.

Otherwise, we may assume that $a_{13}\ne0$, and then $a_{31}=0$. Multiply the first row of $\gamma$ with the second column of $\gamma^{-1}$ to obtain
\[a_{11}b_{12}+a_{12}b_{22}+a_{31}b_{32}=0=(1+qa)(-cq)+cq(1+qa)+a_{13}b_{32},\]
that is $a_{13}b_{32}=0$, so $b_{32}=-a_{32}=0$. 

Multiply the first row of $\gamma$ with the third column of $\gamma^{-1}$ to show that $a_{13}$ determines $a_{23}$.

All in all, this brings us to $\ll q^\epsilon(S/q+1)^2$ options for $\gamma$, as needed.
\end{proof}

For the proof of Lemma~\ref{lem:flag counting bad} we will need the following:
\begin{lem}\label{lem:flag counting sublemma}
The number of solutions for Equations~\eqref{eq: trace relation}, \eqref{eq: trace relation mod q2} in $\tr \gamma,\tr \gamma^{-1} \in \Z$, $\alpha \in \F$, $|\tr\gamma|\le S$, $|\tr \gamma^{-1}| \le R$ is bounded by $\ll (S/q+1)(R/q+1)+q$.
\end{lem}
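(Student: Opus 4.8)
Assume $S\le R$ (the case $S>R$ is handled in the same way). The plan is to fix the value of $\alpha\in\F^\times$ — there are at most $q-1$ of them — and, for each such $\alpha$, count the compatible pairs $(\tr\gamma,\tr\gamma^{-1})$. Fix an integer lift of each $\alpha\in\F^\times$. For fixed $\alpha$, Equations~\eqref{eq: trace relation} and~\eqref{eq: trace relation mod q2} confine the integer vector $(\tr\gamma,\tr\gamma^{-1})$ to a single coset of the sublattice
\[
\Lambda_\alpha=\bigl\{(t_1,t_2)\in\Z^2:\ t_1\equiv 0\pmod q,\ \alpha^2 t_1-\alpha t_2\equiv 0\pmod {q^2}\bigr\}.
\]
Since $q\nmid\alpha$ one computes $[\Z^2:\Lambda_\alpha]=q^3$, so $\mathrm{covol}(\Lambda_\alpha)=q^3$. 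Writing $B=[-S,S]\times[-R,R]$ and letting $N(\alpha)$ be the number of solutions with this $\alpha$, we have $N(\alpha)\le\bigl|(\text{that coset})\cap B\bigr|$, and it suffices to prove $\sum_{\alpha\in\F^\times}N(\alpha)\ll(S/q+1)(R/q+1)+q$.

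To bound $N(\alpha)$ I would use the norm $\|(a,b)\|_B=\max(|a|/S,|b|/R)$, whose unit ball is $B$. If $\lambda_1(\alpha)\le\lambda_2(\alpha)$ are the successive minima of $\Lambda_\alpha$ with respect to $\|\cdot\|_B$, then by Minkowski's second theorem $\lambda_1(\alpha)\lambda_2(\alpha)\asymp \mathrm{covol}(\Lambda_\alpha)/\mathrm{vol}(B)\asymp q^3/(SR)$, and the usual estimate for the number of lattice points of a coset in a symmetric box gives
\[
N(\alpha)\ll\Bigl(1+\tfrac1{\lambda_1(\alpha)}\Bigr)\Bigl(1+\tfrac1{\lambda_2(\alpha)}\Bigr)\ll 1+\frac1{\lambda_1(\alpha)}+\frac{SR}{q^3}.
\]
Summed over the $\le q-1$ values of $\alpha$, the constant $1$ contributes $\ll q$ and the term $SR/q^3$ contributes $\ll SR/q^2$, both within budget. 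Hence everything reduces to proving $\sum_{\alpha\in\F^\times}1/\lambda_1(\alpha)\ll(S/q+1)(R/q+1)+q$.

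For this last sum I would write $\sum_\alpha 1/\lambda_1(\alpha)=\int_0^\infty\bigl|\{\alpha:1/\lambda_1(\alpha)>V\}\bigr|\,dV$ and analyze the level sets. A nonzero vector of $\Lambda_\alpha$ has the form $(qm,\,q\alpha m+q^2 n)$; since $(0,q^2)\in\Lambda_\alpha$ always, $1/\lambda_1(\alpha)\ge R/q^2$ for every $\alpha$, so the range $V\le R/q^2$ contributes $\ll (q-1)R/q^2\ll R/q$. For $V>R/q^2$, a nonzero vector of $\Lambda_\alpha$ inside $(1/V)B$ must have $1\le|m|<q$ (the cases $m=0$ and $q\mid m$ are impossible since $V>R/q^2\ge S/q^2$, using $S\le R$), hence $|m|<S/(qV)$ and $\mathrm{dist}(\alpha m,q\Z)<R/(qV)$; as $\alpha\mapsto\alpha m\bmod q$ is a bijection of $\F^\times$ for $q\nmid m$, there are $\ll S/(qV)$ admissible $m$, each matched by $\ll R/(qV)$ values of $\alpha$, so $\bigl|\{\alpha:1/\lambda_1(\alpha)>V\}\bigr|\ll\frac S{qV}\cdot\frac R{qV}=\frac{SR}{q^2V^2}$ for $R/q^2<V<R/q$, and $0$ for $V\ge R/q$. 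Combining with the trivial bound $\le q-1$ and integrating,
\[
\sum_{\alpha\in\F^\times}\frac1{\lambda_1(\alpha)}\ll\frac Rq+\int_{R/q^2}^{R/q}\min\!\Bigl(q,\ \frac{SR}{q^2V^2}\Bigr)\,dV\ll\frac Rq+\sqrt{\frac{SR}q}\ll\frac Rq+\frac{SR}{q^2}+q,
\]
where the last step uses $\sqrt{SR/q}=\sqrt{(SR/q^2)\cdot q}\le\tfrac12(SR/q^2+q)$. Assembling the three contributions gives $\sum_\alpha N(\alpha)\ll q+SR/q^2+R/q\ll(S/q+1)(R/q+1)+q$, as required.

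The main obstacle is the passage from the easy per-$\alpha$ estimate to the sum over $\alpha$. The naive bound $N(\alpha)\ll(S/q+1)(R/q^2+1)$ — obtained by choosing $\tr\gamma$ in its residue class modulo $q$ and then $\tr\gamma^{-1}$ in its residue class modulo $q^2$ — is correct, but summing it over the $q-1$ values of $\alpha$ produces a spurious term of size $\asymp S$, coming from the additive $1$ in the factor $R/q^2+1$ multiplied by the $\asymp S/q$ choices of $\tr\gamma$, for each of $\asymp q$ values of $\alpha$. The cure is the genuine two-dimensional lattice count above, which captures the fact that for most pairs $(\alpha,\tr\gamma)$ the residue class forced on $\tr\gamma^{-1}$ modulo $q^2$ is disjoint from $[-R,R]$; equivalently, a short vector of $\Lambda_\alpha$ encodes an unusually good approximation of $\alpha/q$ by a fraction with small denominator, and such $\alpha$ are correspondingly rare.
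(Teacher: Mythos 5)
Your argument is correct, but it takes a genuinely different and considerably heavier route than the paper's. The paper never fixes $\alpha$ and counts fiber by fiber; instead it differences two solutions $(x_1,y_1,\alpha)$, $(x_2,y_2,\alpha)$ sharing the same eigenvalue: Equation~\eqref{eq: trace relation} forces $q\mid x_1-x_2$ and $q\mid y_1-y_2$, and Equation~\eqref{eq: trace relation mod q2} then collapses to the single linear congruence $\alpha z-w\equiv 0\pmod q$ for $z=(x_1-x_2)/q$, $w=(y_1-y_2)/q$ with $|z|\ll S/q$, $|w|\ll R/q$. Counting triples $(z,w,\alpha)$ directly --- each nonzero $(z,w)$ pins down $\alpha$, while $(0,0)$ admits all $q$ values --- yields $(S/q+1)(R/q+1)+q$ in two lines; the differencing automatically performs the summation over $\alpha$ that you carry out by hand. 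Your route --- realizing the fiber over $\alpha$ as a coset of the covolume-$q^3$ lattice $\Lambda_\alpha$, invoking Minkowski's second theorem, and controlling $\sum_\alpha 1/\lambda_1(\alpha)$ by a level-set integration that exploits the rarity of $\alpha$ admitting an unusually small $\operatorname{dist}(\alpha m,q\Z)$ --- checks out: the exclusion of $m=0$ and $q\mid m$ for $V>R/q^2$, the emptiness of the level set for $V\ge R/q$, and the evaluation $\int\min(q,SR/(q^2V^2))\,dV\ll\sqrt{SR/q}\le\tfrac12(SR/q^2+q)$ are all sound (and the restriction $S\le R$ you impose is harmless, both because the complementary case is symmetric and because it is the only case needed in the application). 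Your closing diagnosis is also exactly right: the naive bound $q\cdot(S/q+1)(R/q^2+1)$ really does produce a spurious term of size $S$, which is why a genuine two-dimensional count over $\alpha$ is needed. What your approach buys is robustness --- it would survive in settings where the constraints do not linearize under differencing; what it costs is length, as the paper's trick is an order of magnitude shorter.
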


\begin{proof}
Assume that $(x_1,y_1,\alpha),(x_2,y_2,\alpha)$ are solutions. Then by Equation~\eqref{eq: trace relation}, $x_1-x_2=y_1-y_2=0 \mod q$.  Denote $z = (x_1 -y_1)/q$, $w = (x_2 -y_2)/q$. Notice that $|z|\le 2S/q$, $|w| \le 2R/q$. By Equation~\eqref{eq: trace relation mod q2} $(z, w,\alpha)$ is a solution to
$\alpha qz -qw = 0 \mod q^2$, or 
\begin{equation}\label{eq:simple eq}
\alpha z-w = 0\mod q.
\end{equation}

Therefore, $A$ solutions with the same $\alpha\in\F$ for \eqref{eq: trace relation},\eqref{eq: trace relation mod q2} give $A$ solutions to \eqref{eq:simple eq} with the same $\alpha\in\F$. 
So the total number of solutions is bounded by the number of solutions of 
Equation~\eqref{eq:simple eq} with $|z|\le2S/q$,$|w|\le 2R/q$, $\alpha\in\F$. The last number is bounded by $\ll (S/q+1)(R/q+1)+q$, since every choice of $z,w$ sets $\alpha$ uniquely, unless $z=w=0$. 
\end{proof}

\begin{proof}[Proof of Lemma~\ref{lem:flag counting bad}]
By Lemma~\ref{lem:flag counting sublemma} there are $\ll (S/q+1)(R/q+1)+q$ options for $\tr\gamma,\tr\gamma^{-1},\alpha$. In our range of parameters it holds that $RS\le Cq^3$ and since $\n{\gamma^{-1}}_\infty \le 2 \n{\gamma}$, we may assume that $R\le 2S^2$, so $R\ll q^2$, and therefore $(S/q+1)(R/q+1)+q\ll q$.

There are at most $S^2$ options for $a_{11},a_{22}$, and knowing $\tr{\gamma}$, we have now all of the diagonal of $\gamma$.
By Equation~\eqref{eq: relation mod q}, the diagonal of $\gamma$ determines the diagonal of $\gamma^{-1} \mod q$.
Lifting, the first two entries $b_{11},b_{22}$ have just $(R/q+1)^2$ options, giving $b_{33}$ for free.
Thus there are at most $\ll qS^2(R/q+1)^2$ options.

In the non-exceptional case when the non-diagonal entries are non-zero, the rest of the matrix has $\ll_\epsilon q^\epsilon$ options. So we should show that  
\[
qS^2(R/q+1)^2 \ll RSq^2,
\]
or $S(R/q+1)^2\ll Rq$. 
For $R<q$, this reduces to $S\ll Rq$, which is obvious. For $R>q$, this reduces to $RS\ll q^3$, which is again true.

Let us deal with the exceptional case. Without loss of generality we may assume that $a_{11}a_{22}=b_{33}$ and $a_{21} = 0$. 
We further separate into cases:
\begin{enumerate}
    \item If all other non-diagonal entries besides $a_{21}$ and $a_{12}$ are non-zero, then we may guess the diagonal of $\gamma$ and $\gamma^{-1}$ as before, and get the other non-diagonal entries using divisor bounds. The matrix $\gamma$ is then of the form 
    \[\left(\begin{array}{ccc}
         * & ? & \times \\
         0 & * & \times \\
         \times & \times & *
    \end{array} \right),
    \]
    with $a_{12}$ the only unknown and where $\times$ denotes a non-zero value. Then we get that $\det \gamma = Ea_{12}+F$, with $E=a_{23}a_{31}\ne0,F$ known, so $a_{12}$ is determined uniquely from $\det\gamma=1$.
    
    \item If $a_{31}=0$, then $a_{11} = \alpha = \pm 1$, and the matrix is of the form:
    \[\left(\begin{array}{ccc}
         \pm 1 & * & * \\
         0 & * & * \\
         0 & * & *
    \end{array}
    \right).\]  
    As in the first exceptional case of Section~\ref{sec:First Section}, denote $A=\left(\begin{array}{cc}
    a_{22} & a_{23}\\
    a_{32} & a_{33}
    \end{array}\right)$. We know that $\det A = \alpha = \pm1$, and either   $\tr A=0\mod q$ or $\tr A=2\mod q$. Therefore,
    $a_{22},a_{33}$ have at most $\ll S(S/q+1)$ options. If $a_{22}a_{33}\ne \det A = \pm 1 $ then we get $q^\epsilon$ options for $a_{23},a_{32}$ by the divisor bound. If $a_{22}a_{33}= \det A=\pm 1$, then they are both $\pm1$, and $a_{23}a_{32}=0$, so there are $\ll S$ options for $A$. So in any case $A$ has at most $q^{\epsilon}S(S/q+1)$ options.
    The remaining two entries have at most $S^2$ options, so all in all there are $S^3(S/q+1)$ options. It remains to prove that:
    \[
    S^3(S/q+1) \ll RSq^2,
    \]
    which is a simple verification.

    \item If $a_{23}=0$ then
    $a_{22}=\alpha =\pm1$, and the matrix is of the form 
    \[\left(\begin{array}{ccc}
         * & * & * \\
         0 & \pm1 & 0 \\
         * & * & *
    \end{array}
    \right).\]   
    We reduce to the previous case (after permuting indices and transposing).
    
    \item We may now assume $a_{31}\ne0$, $a_{23}\ne0$. If $a_{13}=0$, we may assume $a_{12}\ne 0$, otherwise we reduce to a previous case.
    We now guess the diagonals as before, and further diverge into subcases: 
    \begin{enumerate}
        \item If $a_{32}\ne0$: Then since $a_{23}\ne0$ we have $a_{23}a_{32}=a_{22}a_{33}-b_{11}$, so we have $\ll_\epsilon q^\epsilon$ options for $a_{23},a_{32}$ by the divisor bound. Then the matrix is of the form:
    \[\left(\begin{array}{ccc}
         * & ? & 0 \\
         0 & * & \times \\
         ? & \times & *
    \end{array}
    \right).\]    
        From $\det\gamma=1$ we get $a_{12}a_{31}$, which is non-zero. By the divisor bound we are done.
        \item If $a_{32}=0$, the matrix is of the form: 
        \[\left(\begin{array}{ccc}
         * & ? & 0 \\
         0 & * & ? \\
         ? & 0 & *
    \end{array}
    \right).\]
        From $\det\gamma=1$ we get $a_{12}a_{23}a_{31}$, which is again non-zero, and by the divisor bound we are done.
    \end{enumerate}
    \item 
    If $a_{13}\ne0$, $a_{23}\ne0$, $a_{31}\ne0$, $a_{32}=0$. We may assume that $a_{12}\ne0$ otherwise we reduce to a previous case. Then we guess the diagonals as usual, and since $a_{31}a_{13}\ne 0$ we know them in $\ll_\epsilon q^\epsilon$ ways by the divisor bound. Then the matrix is of the form:
    \[\left(\begin{array}{ccc}
         * & ? & \times \\
         0 & * & ? \\
         \times & 0 & \alpha ^{-2}
    \end{array}
    \right).\]
    From $\det\gamma=1$ we get $a_{12}a_{23}$ which is non-zero, and by the divisor bound we are done.
\end{enumerate}

\end{proof}

\bibliographystyle{acm}
\bibliography{./database}

\end{document}